\newlist{enumth}{enumerate}{1}
\setlist[enumth]{label=\emph{(\arabic*)}, ref=(\arabic*)}
\DeclareMathSymbol{A}{\mathalpha}{operators}{`A}%
\DeclareMathSymbol{B}{\mathalpha}{operators}{`B}%
\DeclareMathSymbol{C}{\mathalpha}{operators}{`C}%
\DeclareMathSymbol{D}{\mathalpha}{operators}{`D}%
\DeclareMathSymbol{E}{\mathalpha}{operators}{`E}%
\DeclareMathSymbol{F}{\mathalpha}{operators}{`F}%
\DeclareMathSymbol{G}{\mathalpha}{operators}{`G}%
\DeclareMathSymbol{H}{\mathalpha}{operators}{`H}%
\DeclareMathSymbol{I}{\mathalpha}{operators}{`I}%
\DeclareMathSymbol{J}{\mathalpha}{operators}{`J}%
\DeclareMathSymbol{K}{\mathalpha}{operators}{`K}%
\DeclareMathSymbol{L}{\mathalpha}{operators}{`L}%
\DeclareMathSymbol{M}{\mathalpha}{operators}{`M}%
\DeclareMathSymbol{N}{\mathalpha}{operators}{`N}%
\DeclareMathSymbol{O}{\mathalpha}{operators}{`O}%
\DeclareMathSymbol{P}{\mathalpha}{operators}{`P}%
\DeclareMathSymbol{Q}{\mathalpha}{operators}{`Q}%
\DeclareMathSymbol{R}{\mathalpha}{operators}{`R}%
\DeclareMathSymbol{S}{\mathalpha}{operators}{`S}%
\DeclareMathSymbol{T}{\mathalpha}{operators}{`T}%
\DeclareMathSymbol{U}{\mathalpha}{operators}{`U}%
\DeclareMathSymbol{V}{\mathalpha}{operators}{`V}%
\DeclareMathSymbol{W}{\mathalpha}{operators}{`W}%
\DeclareMathSymbol{X}{\mathalpha}{operators}{`X}%
\DeclareMathSymbol{Y}{\mathalpha}{operators}{`Y}%
\DeclareMathSymbol{Z}{\mathalpha}{operators}{`Z}%
\renewcommand{\leq}{\leqslant}
\renewcommand{\geq}{\geqslant}
\numberwithin{equation}{section}
\newcommand{\uple}[1]{\text{\boldmath${#1}$}}
\def\setminus{\mathchoice
    {\mathbin{\vrule height .62ex width 1.61ex depth -.38ex}}
    {\mathbin{\vrule height .62ex width 1.61ex depth -.38ex}}
    {\mathbin{\vrule height .50ex width 0.85ex depth -.28ex}}
    {\mathbin{\vrule height .20ex width 0.570ex depth -.24ex}}
}
\renewcommand{\mathcal}{\mathscr}
\newcommand{\sheaf}[1]{\mathcal{{#1}}}
\newcommand{\bfb}{\uple{b}}
\newcommand{\bfx}{\uple{x}}
\newcommand{\bfy}{\uple{y}}
\newcommand{\Cc}{\mathbf{C}}
\newcommand{\Aa}{\mathbf{A}}
\newcommand{\Zz}{\mathbf{Z}}
\newcommand{\Rr}{\mathbf{R}}
\newcommand{\Gg}{\mathbf{G}}
\newcommand{\Qq}{\mathbf{Q}}
\newcommand{\Fp}{{\mathbf{F}_p}}
\newcommand{\Fpt}{{\mathbf{F}^\times_p}}
\newcommand{\Ff}{\mathbf{F}}
\newcommand{\bQl}{\overline{\Qq}_{\ell}}
\newcommand{\Qlb}{\bQl}
\newcommand{\mcV}{\mathscr{V}}
\newcommand{\mcW}{\mathscr{W}}
\newcommand{\mcH}{\mathscr{H}}
\newcommand{\mcZ}{\mathscr{Z}}
\def\loccit{loc.\kern3pt cit.{}\xspace}
\def\cf{see\kern.3em}
\def\Cf{See\kern.3em}
\def\eg{e.g.\kern.3em}
\def\ie{i.e.,\ }
\def\resp{\text{resp.}\kern.3em}
\newcommand{\mods}[1]{\,(\mathrm{mod}\,{#1})}
\newcommand{\lra}{\longrightarrow}
\DeclareMathOperator{\frob}{\mathrm{Fr}}
\DeclareMathOperator{\Spec}{Spec}
\DeclareMathOperator{\rank}{rank}
\DeclareMathOperator{\Kl}{\mathrm{Kl}}
\DeclareMathOperator{\supp}{supp}
\DeclareMathOperator{\Tr}{Tr}
\DeclareMathOperator{\codim}{codim}
\DeclareMathOperator{\ft}{FT}
\DeclareMathOperator{\dual}{D}
\renewcommand{\rho}{\varrho}
\DeclareMathOperator{\Un}{\mathbf{U}}
\theoremstyle{plain}
\newtheorem{theorem}{Theorem}[section]
\newtheorem*{theorem*}{Theorem}
\newtheorem{lemma}[theorem]{Lemma}
\newtheorem{corollary}[theorem]{Corollary}
\theoremstyle{remark}
\theoremstyle{definition}
\newtheorem{definition}[theorem]{Definition}
\newtheorem{example}[theorem]{Example}
\newtheorem{remark}[theorem]{Remark}
\newcommand{\mcM}{\mathscr{M}}
\newcommand{\mcL}{\mathscr{L}}
\newcommand{\mcF}{\mathscr{F}}
\newcommand{\mcX}{\mathscr{X}}
\newcommand{\mcY}{\mathscr{Y}}
\newcommand{\pH}{\tensor*[^{\mathfrak{p}}]{\mathscr{H}}{}}
\newcommand{\pt}{\tensor*[^{\mathfrak{p}}]{t{}}{}}
\begin{document}

\title{Stratification theorems for exponential sums in families}
 
\author{Dante Bonolis}
\address{Duke University, 120 Science Drive, Durham NC 27708}
\email{dante.bonolis@duke.edu}

\author{Emmanuel Kowalski}
\address{ETH Z\"urich -- D-MATH\\
  R\"amistrasse 101\\
  CH-8092 Z\"urich\\
  Switzerland} \email{kowalski@math.ethz.ch} 

\author{Katharine Woo}
\address{Department of Mathematics, Stanford University, Stanford, CA 94305}
\email{khwoo98@stanford.edu}

\date{\today,\ \thistime} 

\subjclass[2010]{11T23, 14F20}

\begin{abstract}
  We survey some of the \emph{stratification theorems} concerning
  exponential sums over finite fields, especially those due to
  Katz--Laumon and Fouvry--Katz, as well as some of their applications.
  Moreover, motivated partly by recent work of Bonolis, Pierce and Woo,
  we prove that these stratification statements admit uniform variants
  in families, both algebraically and analytically.
  \par
  The paper includes an Appendix by Forey, Fresán and Kowalski, which
  provides an elementary intuitive introduction to trace functions in
  more than one variable over finite fields.
\end{abstract}

\maketitle

\begin{flushright}
  \textit{Dedicated to the memory of Gérard Laumon}
\end{flushright}

\setcounter{tocdepth}{1}
\tableofcontents

\section{Introduction}

\subsection{General context}

Exponential sums over finite fields occur in an enormous variety of
problems in number theory. A few notable examples include:
\begin{enumerate}
\item Gauss sums and Jacobi sums, which are closely related to the
  theory of cyclotomic fields, leading for instance to proofs of the
  law of quadratic reciprocity or of Fermat's theorem on primes which
  are sums of two squares of integers (see,
  e.g.,~\cite[Ch.\,8]{ireland-rosen});
\item Kloosterman sums, whose ``omnipresence'' has often been noticed --
  they appear for instance in Fourier coefficients of modular forms, in
  Kloosterman's work on quaternary quadratic forms, or in the proof of
  equidistribution of angles of Gauss sums (see for
  instance~\cite[\S\,14.2,\S\,20.3]{iwaniec-kowalski}
  and~\cite[Ch.\,13]{katz-gkm});
\item Additive character sums in one or many variables play a
  prominent role in the analysis of the ``major arcs'' in the circle
  method (see, e.g.,~\cite[Ch.\,20]{iwaniec-kowalski});
\item Weyl sums that appear in equidistribution problems over finite
  fields (see, e.g.,~\cite[Ch.\,21]{iwaniec-kowalski}).
\end{enumerate}

Abstractly, maybe the most general form of exponential sum over finite
fields would be an expression of the type
\begin{equation}\label{eq-sum-general}
  \sum_{x\in V(k)}t_M(x;k),
\end{equation}
where $k$ denotes a finite field, $V$ is a (non-empty) algebraic
variety\footnote{\ See below for the technical convention we use (a
  separated and reduced scheme of finite type over~$k$); here one can
  think of the zero locus of finitely-many polynomials with coefficients
  in~$k$.}  defined over~$k$ (for instance, the affine $d$-dimensional
space, for which $V(k)=k^d$), and $t_M$ is the ``trace function'' of a
suitable object defined on~$V$.  In the simplest case, this could be a
function of the form
\begin{equation}\label{eq-char-sum}
  t_M(x;k)=\chi(g(x))\psi(f(x)),
\end{equation}
where $f$ and~$g$ are (polynomial) functions on~$V$, while~$\chi$ is a
non-trivial character of the group~$k^{\times}$, extended by taking
the value~$0$ at~$0$, and~$\psi$ is a character of~$k$. The
sum~(\ref{eq-sum-general}) is then one of the classical character sums
\[
  \sum_{x\in V(k)}\chi(g(x))\psi(f(x))
\]
which occur most frequently in applications.

In the great majority of cases,\footnote{\ One notable exception is the
  computation of the sign of quadratic Gauss sums.}  the main goal is to
obtain a non-trivial estimate for the exponential sum, reflecting
oscillations of the values of the trace function as~$x$ varies. The
function~$t_M$ usually happens to be uniformly bounded, say by some
constant~$c\geq 0$ (as in the example above, where it is of
modulus~$\leq 1$), so that the trivial bound is
\[
  \Bigl|\sum_{x\in V(k)}t_M(x;k)\Bigr|\leq c|V(k)|.
\]

As also happens for more general oscillatory sums, we expect significant
cancellation unless the trace function has some specific features which
make it constant or ``almost constant'' on~$V(k)$.  In the context of
sums over finite fields, this expectation can be phrased in much more
precise terms, due to the truly remarkable connections between
exponential sums over finite fields and algebraic geometry, which are
one of the great discoveries of 20th Century number theory, arising
through the work of Hasse, Weil, Grothendieck, Deligne, Katz, Laumon and
others.

The first major step of this connection can be stated as a
``structural'' result: a combination of the Grothendieck--Lefschetz
trace formula and of Deligne's proof of the Riemann Hypothesis over
finite fields~\cite{deligne} leads to the following:

\begin{theorem}[Structure theorem]
  Let~$d\geq 0$ be the dimension of~$V$. There exist non-negative
  integers~$b_i(M)$, defined for~$0\leq i\leq 2d$, and for each~$i$,
  there exist complex numbers~$\alpha_{i,j}$ and integers~$w_{i,j}$
  for $1\leq j\leq b_i(M)$, such that
  \[
    \sum_{x\in V(k)}t_M(x;k)= \sum_{0\leq i\leq 2d}(-1)^i\sum_{1\leq
      j\leq b_i(M)}\alpha_{i,j},
  \]
  and $|\alpha_{i,j}|=|k|^{w_{i,j}/2}$. Moreover, for trace functions
  of the form~\textup{(\ref{eq-char-sum})}, we have $w_{i,j}\leq i$.

  In fact, more generally, for any finite extension $k_n/k$ of
  degree~$n$, we have
  \[
    \sum_{x\in V(k_n)}t_M(x;k_n)= \sum_{0\leq i\leq 2d}(-1)^i\sum_{1\leq
      j\leq b_i(M)}\alpha_{i,j}^n.
  \]
\end{theorem}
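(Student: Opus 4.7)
The plan is to translate the exponential sum into an alternating sum of traces of Frobenius on $\ell$-adic cohomology groups, and then apply Deligne's version of the Riemann Hypothesis to control the absolute values of the eigenvalues. Fix a prime $\ell$ different from the characteristic of $k$ and an isomorphism $\bQl \simeq \Cc$. The very definition of a trace function means that $t_M$ arises from a constructible $\bQl$-complex $\mcM$ on $V$, in the sense that
\[
t_M(x;k) \;=\; \sum_{i} (-1)^i \Tr(\Fr_x \mid \mcH^i(\mcM)_{\bar x})
\]
for every $x \in V(k)$. In the special case (\ref{eq-char-sum}), one takes $\mcM = \mcL_{\psi(f)} \otimes \mcL_{\chi(g)}$, the tensor product of the pullbacks of the Artin--Schreier sheaf attached to $\psi$ and the Kummer sheaf attached to $\chi$ (extended by zero along $g=0$); this is a single lisse rank-one sheaf on a dense open subset, placed in degree $0$ and pure of weight $0$.

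Next, I would apply the Grothendieck--Lefschetz trace formula for $\mcM$ on $V$, relative to the structure morphism $V \to \Spec(k)$ with compact supports:
\[
\sum_{x \in V(k)} t_M(x;k) \;=\; \sum_{i=0}^{2d} (-1)^i \Tr\bigl(\Fr \bigm| H^i_c(V_{\bar k}, \mcM)\bigr),
\]
where $d = \dim V$ and the range $0 \leq i \leq 2d$ comes from the vanishing of $H^i_c$ outside that range. This immediately suggests the correct definitions: set $b_i(M) = \dim_{\bQl} H^i_c(V_{\bar k}, \mcM)$, and let $\alpha_{i,1}, \dots, \alpha_{i,b_i(M)}$ be the eigenvalues of $\Fr$ on $H^i_c(V_{\bar k}, \mcM)$, listed with algebraic multiplicity. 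The identity of the theorem then follows tautologically.

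The weight information comes from Deligne's Weil~II. Each eigenvalue $\alpha_{i,j}$ is a $|k|$-Weil number, so there exists an integer $w_{i,j}$ with $|\alpha_{i,j}| = |k|^{w_{i,j}/2}$ (under any archimedean embedding, the absolute value being independent of the embedding). For trace functions of the form (\ref{eq-char-sum}), the sheaf $\mcM$ is pure (indeed mixed) of weight $\leq 0$, because it is built by tensor product and pullback from the Artin--Schreier and Kummer sheaves, which are pure of weight $0$. The main theorem of Weil~II then upgrades this to the bound $w_{i,j} \leq i$ on $H^i_c$, which is precisely the refinement asserted in the theorem. Finally, passing to an extension $k_n/k$ of degree $n$ replaces $\Fr$ by $\Fr^n$ acting on the same cohomology groups (base change for étale cohomology), and the eigenvalues of $\Fr^n$ are the $n$-th powers of those of $\Fr$; this yields the last formula.

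The main obstacle is conceptual rather than computational: one must make precise what it means for $t_M$ to be \emph{the} trace function of a sheaf-theoretic object $\mcM$, and one must verify the purity/mixedness of this object in order to invoke Weil~II with the optimal bound $w_{i,j} \leq i$. For the general statement without restriction on $t_M$, only the existence of some integers $w_{i,j}$ can be asserted; the inequality $w_{i,j} \leq i$ genuinely uses that the input sheaves in (\ref{eq-char-sum}) are pure of weight $0$. Everything else — the trace formula, the dimension count, the base change — is formal once the sheaf-theoretic setup is in place.
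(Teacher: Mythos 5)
Your proposal is correct and matches what the paper intends. The paper does not spell out a proof of the Structure Theorem; it simply records it as the known consequence of the Grothendieck--Lefschetz trace formula together with Deligne's general form of the Riemann Hypothesis over finite fields, which is exactly what you do: identify $t_M$ with the trace function of a constructible $\ell$-adic complex $\mcM$, apply the trace formula to express the sum as $\sum_i (-1)^i \Tr(\Fr \mid H^i_c(V_{\bar k}, \mcM))$, define $b_i(M)$ and $\alpha_{i,j}$ as the compactly-supported Betti numbers and Frobenius eigenvalues, and use Weil~II to get $w_{i,j}\leq i$ when $\mcM$ is mixed of weight $\leq 0$ (as for $\mcL_{\psi(f)}\otimes\mcL_{\chi(g)}$). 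Your closing remark about the general case is also astute and worth making slightly sharper: for a totally arbitrary constructible complex, even the assertion that $|\alpha_{i,j}|$ is a half-integer power of $|k|$ already requires mixedness — the theorem as stated implicitly restricts to mixed complexes, which is the only case of interest in applications (as the paper's Appendix notes, all objects occurring in practice are mixed).
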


The condition $w_{i,j}\leq i$ is valid for character sums but also for
much more general trace functions (those which are ``mixed of
weights~$\leq 0$''); it is a direct reflection of the Riemann Hypothesis
over finite fields in the most general form proved by Deligne.

Assuming that this condition $w_{i,j}\leq i$ holds, the structure
theorem leads to the bound
\[
  \Bigl|\sum_{x\in V(k)}t_M(x;k)\Bigr|\leq C|k|^{w/2},
\]
where
\[
  C=\sum_{i=0}^{2d}b_i(M),\quad\quad w=\max_{ \substack{0\leq i\leq
      2d\\b_i(M)\not=0}}\max_{1\leq j\leq b_i(M)} w_{i,j}.
\]

It follows that this approach leads to non-trivial bounds for the
exponential sums, provided two conditions are met:
\begin{enumerate}
\item One can give an upper-bound for the numbers $b_i(M)$ (which are
  called \emph{Betti numbers}) or at least for their sum~$C$;
\item One can prove that $b_i(M)$ is zero for $i$ large, and especially
  (and at least) for~$i=2d$ (since in general the summation set $V(k)$
  will have size of order of magnitude about $|k|^{d}=|k|^{2d/2}$, so
  that having $w=2d$ leads to a trivial bound).
\end{enumerate}

These two questions are somewhat independent, and each of them is a
significant problem. We begin by illustrating the first issue, although
this will not be the main focus of this paper. We define as usual
$e(z)=e^{2i\pi z}$ for $z\in\Cc$. We then consider the sum
\[
  \sum_{x\in \Ff_p}e\Bigl(\frac{x^d}{p}\Bigr),
\]
where~$p$ is a prime number and~$d\geq 1$ is an integer. It is, in this
case, straightforward to obtain the structural result above, using Gauss
sums. Assuming~$p\equiv 1\mods{d}$ for simplicity, so that all $d$-th
roots of unity are in $\Ff_p$, we have a basic formula
\[
  |\{x\in\Ff_p\,\mid\, x^d=y\}|= \sum_{\chi^d=1}\chi(y),
\]
where~$\chi$ runs over characters of~$\Ff_p^{\times}$ of order
dividing~$d$, which can be used to detect the number of solutions of
$x^d=y$.\footnote{\ We extend~$\chi$ to $\Ff_p$ by $\chi(0)=0$, except
  if $\chi$ is trivial, in which case $\chi(0)=1$.} Applying this
formula, we obtain
\[
  \sum_{x\in \Ff_p}e\Bigl(\frac{x^d}{p}\Bigr)=
  \sum_{y\in \Ff_p}e\Bigl(\frac{y}{p}\Bigr)
  |\{x\in\Ff_p\,\mid\, x^d=y\}|
  =\sum_{\substack{\chi^d=1\\\chi\not=1}} \sum_{y\in
    \Ff_p}\chi(y)e\Bigl(\frac{y}{p}\Bigr)
\]
(the trivial character does not appear in the final expression because
its contribution would be~$0$). Each of the sums over~$y$ is a Gauss
sum, and is well-known to have modulus~$p^{1/2}$. The structure theorem
holds here with
\[
  d=1,\quad\quad b_0=b_2=0,\quad\quad b_1=d-1,
\]
where the $\alpha_{1,j}$ are the $d-1$ Gauss sums, each with
$w_{1,j}=1$. Thus we get
\[
  \Bigl|  \sum_{x\in \Ff_p}e\Bigl(\frac{x^d}{p}\Bigr)
  \Bigr|\leq (d-1)p^{1/2}.
\]

This bound is also one of the simplest cases of the Weil bound for
additive character sums in one variable. It is remarkably strong, but
this is really only the case when~$d$ is small enough compared to $p$,
in particular when~$d$ is fixed: note that the estimate becomes trivial
if~$d>\sqrt{p}$.

Similar issues may occur for many exponential sums. Nevertheless, it
turns out that for most sums appearing in solutions of classical
problems of analytic number theory, the constant~$C$ can be efficiently
estimated. The most important issue is what happens when, in a character
sum with summands~(\ref{eq-char-sum}), we take $k=\Ff_p$ and vary the
prime number~$p$ while the polynomial functions are ``defined
over~$\Zz$''. In this context, the key goal is to obtain a bound for~$C$
which is independent of~$p$. This was first achieved for additive
character sums (i.e. with $g=1$) by Bombieri, and generalized by Katz to
all character sums (see~\cite{katz-betti}).

More significantly, however, the recent development of Sawin's
\emph{Quantitative Sheaf Theory} (see the presentation by Sawin with
Forey, Fresán and Kowalski~\cite{qst}) has provided a very general
framework to handle questions related to sums of Betti numbers in
classical problems of analytic number theory, so that this first
important problem arising from the Structure Theorem can now be
considered to be secondary in most cases.

The present paper will be concerned with one specific aspect of the
second problem above, which has to do with providing criteria for the
\emph{vanishing} of certain Betti numbers~$b_i(M)$, especially when~$i$
is large.

We begin however with the case $i=2d$, which is of course of particular
importance, since we obtain a non-trivial bound whenever $w<2d$. It can
indeed be achieved in very general situations.

For instance consider a sum with summands~(\ref{eq-char-sum}), and
assume that $V$ is the affine $d$-dimensional space for some integer
$d\geq 1$, so that we have a ``classical'' (multi-variable) character
sum. One can then show using the basic underlying formalism\footnote{\
  The ``coinvariant formula'' for the topmost Betti number, which is
  part of the foundational theory of étale cohomology.} that $w<2d$ if
one of the following conditions holds:
\begin{enumerate}
\item The additive character~$\psi$ is non-trivial and~$f$ is a
  polynomial function of degree~$<p$, where~$p$ is the characteristic
  of~$k$;
\item The multiplicative character~$\chi$ is of order~$d$ and~$g$ is
  not a $d$-th power of another polynomial function.
\end{enumerate}

This fact can be interpreted in the following remarkable property of
character sums over finite fields: \emph{either} the
summand~(\ref{eq-char-sum}) is literally constant (this is what happens
if properties (1) and (2) fail), \emph{or} there is non-trivial
cancellation, in the sense that
\[
  \Bigl|\sum_{x\in k^d}\chi(g(x))\psi(f(x))\Bigr|\leq C|k|^{d-1/2}.
\]

In applications, the minimal requirement $w<2d$ is often not sufficient
however, and what is needed is the \emph{square-root cancellation}
property, which states that $w\leq d$. This corresponds to the vanishing
property
\begin{equation}\label{eq-square-root}
  b_i(M)=0\text{ for all } i>d,
\end{equation}
and to a bound
\[
  \Bigl|\sum_{x\in V(k)}t_M(x;k)\Bigr|\leq C|k|^{d/2},
\]
which is in many cases essentially optimal (on probabilistic grounds for
instance). In the case of sums with one variable ($V$ is a curve, so
that $d=1$), this condition is the same as $w<2=2d$, and this
coincidence explains why one-variable sums (either character sums, or
sums of trace functions) are so useful in so many problems of analytic
number theory.

In the case of genuinely multi-variable sums, however, the issue is much
more complicated. Elementary examples show that although the condition
$w\leq d$ holds very often, it can sometimes fail (many concrete
examples will appear in Section~\ref{sec-examples}). This direct
observation immediately gives an idea of how to proceed: even
if~(\ref{eq-square-root}) does fail for some particular sum, it is also
a fact that in many applications, one has to deal with more than one
exponential sum of the same type, and it may be enough to prove that
``most'' instances do satisfy~(\ref{eq-square-root}). This leads to what
are known as \emph{stratification} results for families of exponential
sums. This is the main topic of this paper.

Our first goal is to recall some of the known stratification results,
focusing on the results of Fouvry and Katz~\cite{fouvry-katz}, which
built on the foundational pioneering work of Katz and
Laumon~\cite{katz-laumon}. The second goal is to complement these
results by providing \emph{uniform} statements about such
stratifications. These are of importance in recent work of Bonolis,
Pierce and Woo~\cite{bpw}, which provided the motivation to write this
survey.

The exponential sums we will consider are special cases of the general
form above. Precisely, they will be of the form
\[
  \sum_{x\in V(k)}t_M(x;k)\psi(h_1x_1+\cdots+h_nx_n)
\]
where $V$ is an algebraic variety over a finite field~$k$ and the
parameter~$h=(h_i)_{1\leq i\leq n}$ varies over~$\Ff_p^n$. For each
value of~$h$, this expression is an exponential sums in the general
sense above, and the principle described previously, that ``most''
exponential sums satisfy the square-root cancellation
property~(\ref{eq-square-root}) will refer to these parameters~$h$: for
``most'' values of $h$, there is square-root cancellation.



More precisely, the theory of Katz and Laumon, and its refinement by
Fouvry and Katz, shows that there is \emph{always} a ``cascading''
sequence of bounds, starting from the ideal case of square-root
cancellation which holds for ``generic'' choices of~$h$, and then become
weaker and weaker, but only for fewer and fewer values
of~$h$. Crucially, these evolving bounds are controlled by the vanishing
of auxiliary polynomials -- in the language of algebraic geometry,
``generic'' means for a Zariski-dense open set, and ``fewer and fewer''
is measured by parameters lying in subvarieties of higher and higher
codimension. This fundamental structural property of the evolving bounds
is essential in many applications.

\begin{remark}\label{rm-ag}
  In the remainder of this paper, we will use the language of
  algebraic geometry (using the book~\cite{g-w} of Görtz and Wedhorn
  as our primary reference). We just recall here some basic language
  which should help to at least understand some of the basic
  statements.  The Appendix contains some more explanations on an
  intuitive level for trace functions and sheaf theory.
  \par
  For $n\geq 1$, a \emph{closed subscheme} $X$ of~$\Aa^n_{\Zz}$ is the
  data of the solution sets $X(A)$, for any commutative ring~$A$, of a
  finite system of polynomial equations in $n$ variables with integral
  coefficients. A \emph{locally closed subscheme} $X$ of~$\Aa^n_{\Zz}$
  is defined similarly, except that in addition to equations $F(x)=0$,
  one allows (finitely many) inequalities $G(x)\not=0$, with $G$ again
  an integral polynomial in~$n$ variables.
  
  Looking at the reductions modulo a prime~$p$ of the defining
  equations and inequations, we obtain the corresponding subschemes
  over the finite field~$\Ff_p$, and in particular the sets $X(\Ff_p)$
  of rational points.

  There is an algebraic notion of dimension for~$X$, which has some of
  the expected properties, except that (in the case of~$\Aa^n$
  itself), we have~$\dim(\Aa^n_{\Zz})=n+1$, which takes into account
  the fact that~$\Zz$ itself plays the role of a one-dimensional
  object in algebraic geometry (its points being essentially the prime
  numbers); the \emph{relative dimension} corrects this, so
  that~$\Aa^n_{\Zz}$ has relative dimension~$n$, as expected. More
  precisely, for a scheme $X$ over a base~$Y$ (which will usually just
  be the spectrum of~$\Zz$ or~$\Zz[1/N]$ for some~$N$), the
  \emph{relative dimension} of~$X$ over~$Y$ (or just \emph{relative
    dimension}, if~$Y$ is clear from context) is the maximum of the
  dimension of the fibers of~$X\to Y$ (each fiber being a scheme over
  the residue field at a point of~$Y$). If all fibers are of the same
  dimension (i.e., if~$Y\to X$ is equidimensional), then the relative
  dimension is equal to $\dim(X)-\dim(Y)$, at least in the cases of
  interest to us (see, e.g.,~\cite[Cor.\,14.116]{g-w}). We will
  sometimes write $\dim_{Y}(X)$ for the relative dimension.

  We will also use the fact that for a scheme~$X$ of finite-type and
  dominant over~$\Zz$ (meaning that its reduction modulo primes are
  non-empty for all but finitely many primes), the ``generic''
  dimension of the reduction $X_{\Fp}$ of~$X$ modulo~$p$ (which we
  will also sometimes denotes $X\otimes\Fp$)
  is also the dimension of the complex algebraic variety $V_{\Cc}$.
\end{remark}

\subsection{Stratification statements}

We begin this paper by recalling the statement of Theorem 1.1 
in the paper of Fouvry and Katz~\cite{fouvry-katz} (compare
with~\cite[Prop.\,1.0]{fouvry}).

\begin{theorem}[Fouvry--Katz]\label{th-fk}
  Let $n$ and $d$ be positive integers. Let $V$ be a locally closed
  subscheme of $\Aa^n_{\Zz}$ such that $\dim(V_{\Cc})\leq d$. Let
  $f\in\Zz[x_1,\ldots,x_n]$ be given.
  \par
  Then there exist constants $C$ and~$N$, depending on $(n,d,V,f)$,
  closed subschemes $X_j\subset \Aa^n_{\Zz}$ for $1\leq j\leq n$, of
  relative dimension $\leq n-j$, such that
  $$
  \Aa^n_{\Zz}\supset X_1\supset\cdots\supset X_n
  $$
  with the following property: for any invertible function $g$ on $V$,
  for any prime number $p\nmid N$ such that $V$ modulo~$p$ has
  dimension~$\leq d$, for any $h\in (\Aa^n-X_j)(\Fp)$, for any
  non-trivial additive character $\psi$ of $\Fp$ and for any
  multiplicative character $\chi$ of $\Fpt$, we have
  \begin{equation}\label{eq-sum}
    \Bigl|\sum_{x\in
      V(\Fp)}\chi(g(x))\psi(f(x)+h_1x_1+\cdots+h_nx_n)\Bigr|
    \leq Cp^{d/2+(j-1)/2}.
  \end{equation}
\end{theorem}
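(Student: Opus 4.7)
My proof plan follows the Fourier-theoretic strategy of Katz--Laumon as adapted by Fouvry and Katz: interpret the sum as a trace of Frobenius on the stalk of a Deligne--Fourier transform, apply the Katz--Laumon stratification theorem for Fourier transforms, and then convert the resulting cohomological vanishing and weight bounds into the stated estimate via Deligne's Riemann Hypothesis. First, after spreading $(V,f)$ out over $\Zz[1/N_0]$ and fixing an auxiliary prime $\ell$, for each prime $p \nmid N_0\ell$ and each admissible triple $(\chi,\psi,g)$ I would form the constructible $\bQl$-sheaf
$$\mcF := \iota_{!}\bigl(\mcL_\chi(g)\otimes \mcL_\psi(f)\bigr)$$
on $\Aa^n_{\bFp}$, where $\iota\colon V\injecte\Aa^n$ is the locally closed immersion, $\mcL_\psi$ is the Artin--Schreier sheaf attached to $\psi$, and $\mcL_\chi(g)=g^{*}\mcL_\chi$ is the Kummer pullback (defined since $g$ is invertible on $V$). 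The sheaf $\mcF$ is mixed of weights $\leq 0$, supported on $V$, and of generic rank one. The Grothendieck--Lefschetz trace formula identifies the sum at parameter $h$ with
$$\sum_{i=0}^{2d}(-1)^{i}\Tr\bigl(\Frob_p \bigm| H^{i}_{c}(\Aa^{n}_{\bFp},\mcF\otimes\mcL_\psi(\langle h,x\rangle))\bigr).$$
Crucially, the $h$-dependence is now entirely encoded by the stalks of a single object, namely the Deligne--Fourier transform $\ft(\mcF)$ on the dual $\Aa^n$.

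The central input is then the Katz--Laumon stratification theorem for Fourier transforms (as refined in Fouvry--Katz). Applied to $\mcF$, it produces a nested chain
$$\Aa^n\supset X_1\supset X_2\supset\cdots\supset X_n$$
of closed subsets with $\codim X_j\geq j$, such that for $h\notin X_j$ the stalks of $\ft(\mcF)$ at $h$ vanish outside a controlled range of cohomological degrees and the surviving Frobenius eigenvalues have absolute value at most $p^{(d+j-1)/2}$. Translating back: for $h\in(\Aa^n - X_j)(\Fp)$,
$$H^{i}_{c}(\Aa^{n}_{\bFp},\mcF\otimes\mcL_\psi(\langle h,x\rangle))=0\quad\text{for }i>d+j-1,$$
and the surviving groups are mixed of weights $\leq d+j-1$. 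Combined with a uniform upper bound $C_0$ on the total Betti number $\sum_i\dim H^{i}_{c}$ -- available from Katz's general Betti estimates or, more flexibly, from the Quantitative Sheaf Theory framework of Sawin -- the trace formula yields the target bound $C_0\,p^{d/2+(j-1)/2}$.

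It then remains to descend the stratification from $\bFp$ to $\Zz$ and to make both $C$ and the $X_j$ independent of the choice of $(g,\chi)$. The descent is relatively standard: the $X_j$ are constructed from the characteristic variety and singular-support data of $\mcF$, which are algebraic and, after enlarging $N$, defined over $\Zz[1/N]$; fibrewise codimension bounds combined with constructibility then yield closed subschemes of $\Aa^{n}_{\Zz}$ of relative dimension $\leq n-j$. The more delicate point -- in my view the main obstacle -- is the uniformity in $(g,\chi)$. Here one has to argue that $\mcL_\chi(g)$ is always a rank-one sheaf lisse on $V$, tame along the boundary and belonging to a bounded family in the sense of Quantitative Sheaf Theory; Laumon's structure theorem for the singular support of a Fourier transform then forces the singular support of $\ft(\mcF)$ to lie in a subset of $\Aa^n$ depending only on $(V,f)$, which is exactly what allows the $X_j$ and the constant $C$ to be taken uniformly across all admissible $(g,\chi)$.
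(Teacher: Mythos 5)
Your proposal faithfully reconstructs the Fouvry--Katz strategy for this theorem, which is exactly the four-step scheme the paper sketches in Section~4 (Representation / Stratification / Semiperversity / Betti) and packages in Lemma~\ref{lm-fk}: represent the family of sums as stalks of a Deligne--Fourier transform, invoke the Katz--Laumon stratification theorem over $\Zz[1/N]$, use Deligne's Riemann Hypothesis together with semiperversity for the weight and vanishing bounds, and close with a uniform Betti-number estimate. (The paper itself does not reprove Theorem~\ref{th-fk} -- it quotes it from~\cite[Th.~1.1]{fouvry-katz} -- but this is precisely the mechanism the paper attributes to Fouvry--Katz and then generalizes.)

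The one place where your mechanism differs from the one the paper alludes to is the uniformity in $(g,\chi)$. You propose to get it from the stability of singular support under Fourier transform. The argument underlying Fouvry--Katz, recalled in the paper's Remark~\ref{rm-chig} and formalized in the notion of a general KL-datum, is more elementary: the shifted, twisted Kummer object $\mcL_{\chi(g)}[\dim V](\dim V/2)$ is a direct factor of $g^*(x\mapsto x^M)_*\bQl[\dim V](\dim V/2)$ (where $M$ is the order of $\chi$), which is semiperverse, mixed of weights $\leq 0$, and adapted to the trivial stratification $\mcV=\{V\}$; and the Fouvry--Katz stratification theorem~\cite[Th.~2.1]{fouvry-katz} produces a stratification of the $h$-space depending only on $(V,f,\mcV)$ that simultaneously controls the Fourier transform of \emph{every} object of this type. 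Because that class of objects contains all the Kummer twists at once, uniformity in $(g,\chi)$ is automatic, without any appeal to singular-support geometry. Your singular-support route is plausible but would require more care (in particular to bound the singular support of $\mcF$ uniformly over $g$), so it is worth flagging that the original proof sidesteps this entirely.
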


\begin{remark}
  (1) We see for instance that if $h\notin X_1(\Ff_p)$, which is a
  ``generic'' condition, we have the square-root cancellation bound
  \[
    \Bigl|\sum_{x\in
      V(\Fp)}\chi(g(x))\psi(f(x)+h_1x_1+\cdots+h_nx_n)\Bigr| \leq
    Cp^{d/2},
  \]
  recalling that if $V$ modulo~$p$ has dimension~$d$, then the number
  of $\Ff_p$-points is of order of magnitude $p^d$.

  (2) The actual statement in the paper of Fouvry and Katz does not
  require the restriction on the dimension of $V$ modulo~$p$, which is
  satisfied for all but finitely many primes. We will see however that
  it is convenient to incorporate this condition for the later uniform
  statements.
\end{remark}

We are interested in providing uniform and quantitative versions of this
stratification result when the various data (especially $V$ and $f$)
vary. In order to facilitate the discussion, we will use the following
terminology: given the basic data $(V,f)$ (which implicitly also fixes
$n$ and~$d$), a \emph{Katz--Laumon stratification datum} (abbreviated
KL-datum) for an invertible function $g\colon V\to\Gg_m$ is a triple
$(\mathcal{X},N,C)$ where $N\geq 1$ is an integer, $C>0$ is a real
number, and $X_j\subset \Aa^n_{\Zz}$ are closed subschemes of relative
dimension $\leq n-j$ over~$\Zz[1/N]$, such that
\[
  \Aa^n_{\Zz}\supset X_1\supset\cdots\supset X_n
\]
and such that the estimate~(\ref{eq-sum}) holds in the situations
indicated in Theorem~\ref{th-fk}.  Thus Theorem~\ref{th-fk} states the
\emph{existence} of a KL-datum for any pair $(V,f)$ and any~$g$.

\begin{remark}
  In fact, the result states the stronger fact that there is a
  \emph{common} KL-datum which ``works'' for all~$g$, but below when
  considering families of varieties, it will be necessary to restrict
  the choices of~$g$, and for this reason we find it cleaner to quantify
  separately over the function~$g$. This is further justified by the
  fact that, in many applications, this uniformity with respect to~$g$
  is not really exploited.
\end{remark}


The quantitative problems we want to address are the following:
\begin{enumerate}
\item Given $(V,f)$ (and possibly~$g$), can one bound the ``complexity''
  of a KL-datum $(\mathcal{X},N,C)$ of~$(V,f)$?  Concretely, this means
  finding a bound for $N$, $C$ and for the coefficients of a family of
  polynomials whose common zeros define the subvarieties $X_j$, in terms
  of a bound on the size of the coefficients of the equations which
  define~$V$.
\item If we have an ``algebraic'' family of data $(V_a,f_a)$,
  parameterized by $a\in\Zz$ (or some other parameter space), how does a
  KL-datum $(\mathcal{X}_a,N_a,C_a)$ for $(V_a,f_a)$ vary with~$a$?
  Here, the way $g$ varies with~$a$ will need some care.
\end{enumerate}

We will explain briefly in Section~\ref{sec-motivation} the original
motivation for these questions. Our main goal in this paper is to answer
them, at least in some cases. Although there might seem to be little
connection between the two questions, and the second may seem possibly
less relevant to analytic number theory than the first, the key idea
behind the proofs of the quantitative bound for the complexity of the
stratifications will be deduced from the algebraic answer to the second
question.

Our first main theorem concerns the algebraic uniformity. First we
introduce the setting.  We consider non-negative integers~$n$, $d$
and~$r$ and a locally closed subscheme
\[
  W\subset \Aa^n_{\Zz}
\]
of relative dimension $d+r$ over~$\Zz$ (i.e., with $\dim_{\Zz}(W)=d+r$),
given with a morphism $\Delta\,:\, W\lra \Aa^r_{\Zz}$.  For any
$a\in \Aa^r$, we denote by~$V_a$ the fiber $\Delta^{-1}(a)$ of~$\Delta$
over~$a$. Informally (at least if $\Delta$ is surjective, or dominant),
the family of fibers $(V_a)$ is an ``$r$-parameter family of
varieties'', parameterized by~$a\in \Aa^r$.  If~$a\in\Zz^r$, we can
therefore apply Theorem~\ref{th-fk} to each $V_a$. We ask if the
corresponding KL-datum also varies algebraically.


The statement we will prove in this setting is the following.

\begin{theorem}[Algebraically uniform
  KL-stratifications]\label{th-algebraic-dependency}
  Let $W$ be as above, with $W$ of relative dimension $d+r$ over~$\Zz$,
  and let $\Delta\colon \Aa^n_{\Zz}\to \Aa^r_{\Zz}$ be a morphism such
  that the restriction of $\Delta$ to~$W$, still denoted $\Delta$, is
  dominant.

  Let~$f\colon W\to \Aa^1_{\Zz}$ be a function on~$W$.  Let
  $g\colon W\to\Gg_m$ be an invertible function on~$W$.
  
  There exist 
  \begin{enumth}
  \item An integer $N\geq 1$ and a real number~$C>0$,
  \item Closed subschemes $Y_j\subset \Aa^{r+n}_{\Zz}$ defined for
    $1\leq j\leq r+n$ with
    \[
      \dim_{\Zz}(Y_j)\leq r+n-j
    \]
    such that
    \[
      \Aa^{r+n}_{\Zz}\supset Y_1\supset \cdots \supset Y_{r+n},
    \]
  \item A proper closed subscheme $A$ of~$\Aa^r_{\Qq}$,
  \item A non-zero polynomial~$\varphi\in \Zz[X_1,\ldots,X_r]$,
  \end{enumth}
  which satisfy the following: defining closed subschemes $X_{j,a}$ for
  $a\in\Aa^r_{\Zz}$ by
  \[
    X_{j,a}=\{h\in \Aa^{n}_{\Zz}\,\mid\, (a,h)\in Y_j\}\subset
    \Aa^n_{\Zz},
  \]
  then for all $a\in \Zz^r$ outside of~$A$, the integer $\varphi(a)$
  is non-zero and
  $((X_{j,a})_{j\leq n},N\varphi(a),C)$ is a KL-datum
  for~$(V_a,f|V_a)$ and for the invertible
  function~$g_a\colon V_a\to \Gg_m$ obtained by restriction of~$g$
  to~$V_a$.

  Furthermore, one can find a single tuple $((Y_j),N,C,A,\varphi)$ for
  which the above holds for all invertible
  functions~$g\colon W\to \Gg_m$.
\end{theorem}

\begin{remark}
  (1) Concretely, this means (in particular) that there exist two
  non-zero polynomials $F_1\in\Zz[a]$ (in $r$ variables) and
  $F_2\in\Zz[a,h]$ (in $r+n$ variables) such that we have square-root
  cancellation
  \[
    \Bigl| \sum_{\substack{x\in
        W(\Fp)\\\Delta(x)=a}}\chi(g(x))\psi(f(x)+h\cdot x)\Bigr|\leq
    Cp^{d/2}
  \]
  if $\psi$ is a non-trivial additive character modulo~$p$, for
  \begin{itemize}
  \item all~$a$ such that $F_1(a)\not=0$,
  \item all primes~$p$ such that $V_a$ modulo~$p$ has dimension~$d$ and
    such that $p\nmid\varphi(a)$,
  \item and for all~$h\in \Fp^r$ sch that $F(a,h)\not=0$.
  \end{itemize}

  (Recall from our discussion of Theorem~\ref{th-fk} that the bounds for
  exponential sums are restricted to primes~$p$ for which $V_{a,\Fp}$
  has dimension~$d$.)  Moreover, the polynomials $F_1$, $F_2$ and
  $\varphi$ can be chosen to be independent of the function~$g$, which
  is invertible on all of~$W$.

  (2) The requirement that $V_{a,\Fp}$ has dimension~$d$ is intuitively
  necessary. More precisely, if $V_{a,\Fp}$ has higher dimension (which
  is the only possibility, since $d$ is the ``generic'' dimension), then
  an estimate like
  \[
    \Bigl| \sum_{\substack{x\in
        W(\Fp)\\\Delta(x)=a}}\chi(g(x))\psi(f(x)+h\cdot x)\Bigr|\leq
    Cp^{d/2}
  \]
  would be ``better'' than square-root cancellation, and therefore
  unlikely to hold. A similar phenomenon explains the requirement that
  $p$ does not divide some integer (namely $\varphi(a)$) depending
  on~$a$: this is similar to the restriction that $p$ be coprime to~$a$
  for the Weil bound
  \[
    \Bigl| \sum_{x\in\Ff_p^{\times}} \psi(ax+x^{-1})\Bigr|\leq 2\sqrt{p}
  \]
  to hold.

  (3) We restrict the indices to define $(X_{j,a})$ to $j\leq n$ instead
  of $j\leq n+r$ because, for $a$ outside of~$A$, the relative dimension
  of $X_{j,a}$ would be $<0$, and hence these are empty.  In further
  statements below, we will omit the restriction $j\leq n$ for
  simplicity.
\end{remark}

We believe that this theorem will be useful in a number of
applications. However, in the motivating problem of Bonolis, Pierce and
Woo, and for the more general question of analytic uniformity, it is not
sufficient because one wants to handle \emph{all} parameters, and not
only those outside of a codimension one subscheme.  To handle this, we
will need the following generalization of the previous theorem, where
the parameter space is not assumed to be all of~$\Aa^r_{\Zz}$, but can
possibly be smaller. Apart from this, the statement is identical.

\begin{theorem}[General algebraically uniform
  KL-stratifications]\label{th-general-kl}
  Let $W\subset \Aa^n_{\Zz}$ be a locally closed subscheme, let
  $\mathbf{M}\subset\Aa^r_{\Zz}$ be a reduced closed subscheme and
  $\Delta\colon W\to \mathbf{M}$ a dominant morphism. Assume that $W$
  is of relative dimension $d+\dim(\mathbf{M})$ over~$\Zz$.
  Let~$f\colon W\to \Aa^1_{\Zz}$ be a function on~$W$.  Let
  $g\colon W\to\Gg_m$ be an invertible function on~$W$.

  There exist
  \begin{enumth}
  \item An integer $N\geq 1$ and a real number~$C>0$,
  \item Closed subschemes $Y_j\subset \mathbf{M}\times \Aa^{n}_{\Zz}$
    defined for $1\leq j\leq n+\dim_{\Zz}(\mathbf{M})$ with
    \[
      \dim_{\Zz}(Y_j)\leq \dim_{\Zz}(\mathbf{M})+n-j
    \]
    such that
    \[
      \mathbf{M}\times \Aa^{n}_{\Zz}\supset Y_1\supset \cdots \supset
      Y_{\dim_{\Zz}(\mathbf{M})+n},
    \]
  \item A proper closed subscheme $A$ of~$\mathbf{M}_{\Qq}$,
  \item A non-zero polynomial~$\varphi\in \Zz[X_1,\ldots,X_r]$,
  \end{enumth}
  which satisfy the following: defining closed subschemes $X_{j,a}$ for
  $a\in \mathbf{M}$ by
  \[
    X_{j,a}=\{h\in \Aa^{n}_{\Zz}\,\mid\, (a,h)\in Y_j\}\subset
    \Aa^n_{\Zz},
  \]
  then for all $a\in \mathbf{M}(\Qq)\cap \Zz^r$ outside of~$A$, the
  integer $\varphi(a)$ is non-zero and the triple
  $((X_{j,a})_{j\leq n},N\varphi(a),C)$ is a KL-datum
  for~$(V_a,f|V_a)$ and for the function~$g_a\colon V_a\to \Gg_m$
  obtained by restriction of~$g$ to~$V_a$.

  Furthermore, one can find a single tuple $((Y_j),N,C,A,\varphi)$ for
  which the above holds for all invertible
  functions~$g\colon W\to \Gg_m$.
\end{theorem}

In fact, and this is crucial for the motivating application, we need to
go further by allowing more general summands than $\chi(g(x))$ in the
exponential sums.

More precisely, we want to allow trace functions ``adapted'' to a given
stratification of the summation space~$V$. This means the following: we
are given a prime number~$\ell$, a stratification $\mcV=(V_i)$ of~$V$
(this is simply a set-theoretic partition of $V$ into finitely many
reduced locally closed-subschemes, see~\cite[p.\,120]{fouvry-katz}) and
an object~$L$ of the derived category $D^b_c(V[1/\ell],\bQl)$\footnote{\
  There is a short intuitive introduction to the objects of the derived
  category $D^b_c(V[1/\ell],\bQl)$ in Section~\ref{ssec-weight}.} which
is adapted to~$\mcV$, in the sense that all of its cohomology sheaves
of~$L$ are lisse on all strata of~$\mcV$
(see~\cite[p.\,120]{fouvry-katz}). We then consider the family of sums
\[
  \sum_{x\in V(\Fp)}t_L(x)\psi(f(x)+h\cdot x),
\]
for $h\in\Aa^n(\Fp)=\Ff_p^n$, and we want to obtain stratification
statements for this sum similar to those in Theorem~\ref{th-fk}.

\begin{remark}
  This setting does indeed generalize the situation in
  Theorem~\ref{th-fk} (and the proofs of the results of Fouvry--Katz go
  through this step): if we take the ``trivial''
  stratification~$\mcV=\{V\}$ and the object~$L$ to be the single sheaf
  $[x\mapsto x^a]^*g^*\bQl$ (viewed as a complex by being in
  degree~$0$), then the trace function of $L$ is a combination of
  $\chi(g(x))$ for all multiplicative characters of order
  dividing~$a$. (Such an object is adapted to~$\mcV$ because it ``is''
  its unique non-zero cohomology sheaf, and because $g$ is invertible.)
\end{remark}

Thus we generalize the notion of Katz--Laumon stratification as
follows.\footnote{\ The terminology ``semiperverse'', ``mixed'',
  ``direct factor'' are explained, at least intuitively, in the Appendix
  (see Sections~\ref{ssec-perv},~\ref{ssec-weight-lisse}
  and~\ref{ssec-weight},~\ref{ssec-trace}, respectively).
  Examples will also
  help to clarify the meaning.}

\begin{definition}[Generalized KL-datum]\label{def-kl-datum}
  Let $(V,f,\mcV)$ be as above, with $\dim_{\Zz}(V)=d$, let~$\ell$ be a
  prime number and let~$K$ be an object of $D^b_c(V[1/\ell],\bQl)$
  adapted to $\mcV$.  A KL-datum for $(V,f,\mcV)$ and $K$ is a triple
  $(\mcX,N,C)$, where $N\geq 1$ is an integer, $C>0$ is a real number
  and $\mcX=(X_j)$ is a family of closed subschemes of~$\Aa^n$ with
  $\dim_{\Zz[1/N]}(X_j)\leq n-j$ and
  \[
    \Aa^n_{\Zz}\supset X_1\supset \cdots\supset X_n,
  \]
  such that the following holds:
  \begin{itemize}
  \item for any prime number $p\nmid \ell N$ such that $V\otimes \Fp$
    has dimension~$d$, and such that~$K\otimes \Fp$ is semiperverse of
    weights~$\leq 0$,
  \item for any direct factor~$L$ of~$K\otimes\Fp$,
  \item for any $h\in (\Aa^n-X_j)(\Fp)$,
  \item for any non-trivial additive character $\psi$ of $\Fp$,
  \end{itemize}
  we have
  \[
    \Bigl|\sum_{x\in V(\Fp)}t_L(x)\psi(f(x)+h_1x_1+\cdots+h_nx_n)\Bigr|
    \leq Cp^{(j-1)/2}.
  \]
\end{definition}

\begin{remark}\label{rm-normalize}
  The form of the upper-bound seems different from what we discussed
  earlier in the case of multiplicative character sums. This is because
  of the requirement that~$K$ and~$L$ be semiperverse of
  weights~$\leq 0$; for the case of $\chi(g(x))$, this implies that the
  corresponding object~$L$ is not the sheaf $\mcL_{\chi(g)}$ in
  degree~$0$, but the shifted and twisted object
  \[
    \mcL_{\chi(g)}[d](d/2)
  \]
  (i.e., a twist of~$\mcL_{\chi(g)}$ viewed as a complex by being a
  single sheaf in degree~$-d$), which has trace function
  \[
    (-1)^{d}p^{-d/2}\chi(g(x)).
  \]

  In fact, this also explains the requirement that $V\otimes \Fp$ has
  dimension~$d$, since otherwise this object would \emph{fail} to be
  semiperverse.

  See also Remark~\ref{rm-chig} below.
\end{remark}

Fouvry and Katz deduced Theorem~\ref{th-fk} from a theorem
(see~\cite[Th.\,3.1]{fouvry-katz}) which actually applies to provide a
KL-datum for a given stratification $\mcV$ (in fact one which applies to
all objects~$K$ subject to a suitable restriction of being ``fiberwise
semiperverse'').

We will establish that these more general stratifications also vary
algebraically in families. This however requires some restriction on the
objects $K$ which we consider, because the semiperversity condition
modulo~$p$ for an object~$K$ on the total space~$W$ of a family may not
imply that the restrictions~$K_a$ on~$V_a$ remain
semiperverse.\footnote{\ Up to a necessary uniform shift.} The following
definition is ad-hoc for this purpose.

\begin{definition}[Transverse semiperverse objects]
  \begin{enumerate}
  \item Let~$k$ be a finite field. Let $W$ be an algebraic variety
    over~$k$ and $V\subset W$ a closed subvariety. Let~$\ell$ be a prime
    number invertible in~$k$. A semiperverse object~$K\in D^b_c(W,\bQl)$
    is said to be \emph{$V$-transverse} if the object
    \[
      (K|V)[-\codim(V)]
    \]
    is semiperverse on~$V$.
    
  \item Let~$N\geq 1$ be an integer. Let $W$ be a finite-type scheme
    over~$\Zz[1/N]$ and $V\subset W$ a closed subscheme. An
    object~$K\in D^b_c(W,\bQl)$ is said to be \emph{fiberwise
      $V$-transverse} if, for all primes $p\nmid N$ such that
    $K\otimes \Fp$ is semiperverse, this object is
    $V\otimes\Fp$-transverse.
  \end{enumerate}
\end{definition}

This property holds, and is rather easy to check, in many cases of
interest, as we now illustrate.

\begin{example}\label{ex-transverse}
  Let $V\subset W$ be a closed subvariety of an irreducible algebraic
  variety $W$ over a finite field~$k$.

  (1) If $g$ is a function invertible on~$W$, and $\chi$ is a
  multiplicative character of~$k^{\times}$, then the shifted Kummer
  sheaf $\mcL_{\chi(g)}[\dim(W)]$ is semiperverse, and is
  $V$-transverse. Indeed, we have
  \[
    (\mcL_{\chi(g)}[\dim(W)]|V)[-\codim(V)]=\mcL_{\chi(g\mid
      V)}[\dim(V)],
  \]
  which is semiperverse.

  (2) In fact, quite generally, let $\mcF$ be a constructible sheaf
  on~$W$. 
  Then $K=\mcF[\dim(W)]$ (placed in degree $-\dim(W)$) is semiperverse,
  and it is~$V$-transverse as soon as the support of the restriction
  $K|V$ is also equal to~$V$. Indeed, we get as before
  \[
    (K|V)[-\codim(W)]=(\mcF|V)[\dim(V)],
  \]
  which is semiperverse under our assumption.

  (3) On some intuitive level, as explained at the beginning of
  Section~\ref{ssec-perv}, to say that $K$ on~$W$ is semiperverse (and
  mixed of weights $\leq 0$) amounts to saying that
  \[
    \sum_{x\in W(k)}|t_K(x)|^2\asymp 1.
  \]

  If the values of~$t_K$ are ``well-distributed'', we then expect that
  for $V\subset W$, of codimension~$r\geq 0$, we should have
  \[
    \sum_{x\in V(k)}|t_K(x)|^2\asymp |k|^{-r},
  \]
  and if this is so, then $(K|V)[-r](-r/2)$ should be semiperverse of
  weights $\leq 0$. The definition of $V$-transverse objects captures
  this intuition.
\end{example}


We can now state a generalization of Theorem~\ref{th-general-kl} to
trace functions.

\begin{theorem}[Algebraically uniform KL-stratifications for trace
  functions]\label{th-general-kl-trace}
  Let $W\subset \Aa^n_{\Zz}$ be a locally-closed subscheme,
  $\mathbf{M}\subset \Aa^r_{\Zz}$ a reduced closed subscheme of
  relative dimension $d+\dim_{\Zz}(\mathbf{M})$ and
  $\Delta\colon W\to \mathbf{M}$ a dominant morphism.  Let~$\mcW$ be a
  stratification of~$W$.  Let~$f\colon W\to \Aa^1_{\Zz}$ be a function
  on~$W$.

  Let~$\ell$ be a prime number and~$K$ an object of
  $D^b_c(W[1/\ell],\bQl)$. Let~$N_1\geq 1$ be an integer and
  $\mathbf{M}_1\subset \mathbf{M}$ a subscheme of codimension $\geq 1$
  such that
  \begin{enumth}
  \item[\emph{(a)}] $K\otimes \Fp$ is semiperverse of weights~$\leq 0$ on
    $W\otimes\Fp$ for all $p\nmid N_1$,
  \item[\emph{(b)}] $K$ is fiberwise $V_b$-transverse for all~$b$
    outside $\mathbf{M}_1$.
  \end{enumth}
  
  There exist
  \begin{enumth}
  \item An integer $N\geq 1$, divisible by $N_1$, and a real
    number~$C>0$,
  \item Closed subschemes
    $Y_j\subset \mathbf{M}\times \Aa^{n}_{\Zz}$ defined for
    $1\leq j\leq n+\dim_{\Zz}(\mathbf{M})$ with
    \[
      \dim_{\Zz}(Y_j)\leq \dim_{\Zz}(\mathbf{M})+n-j
    \]
    such that
    \[
      \mathbf{M}\times \Aa^{n}_{\Zz}\supset Y_1\supset \cdots \supset
      Y_{\dim_{\Zz}(\mathbf{M})+n},
    \]
  \item A closed subscheme $A$ of~$\mathbf{M}_{\Qq}$ of codimension
    $\geq 1$, containing $\mathbf{M}_{1,\Qq}$,
  \item A non-zero polynomial~$\varphi\in \Zz[X_1,\ldots,X_r]$,
  \end{enumth}
  which satisfy the following: defining closed subschemes $X_{j,a}$ for
  $a\in \mathbf{M}$ by
  \[
    X_{j,a}=\{h\in \Aa^{n}_{\Zz}\,\mid\, (h,a)\in Y_j\}\subset
    \Aa^n_{\Zz},
  \]
  and stratifications $\mcV_a$ of $V_a$ by
  \[
    \mcV_a=\{W_j\cap V_a\,\mid\, W_j\in \mcW\},
  \]
  then for all $a\in \mathbf{M}(\Qq)\cap\Zz^r$ outside of~$A$, the
  integer $\varphi(a)$ is non-zero and the triple
  $((X_{j,a})_{j\leq n},N\varphi(a),C)$ is a KL-datum for the restricted
  data $(V_a,f|V_a,\mcV_a)$ and for the
  object~$(K_a|V_a)[-\codim(V_a)]$.


  

  Moreover, we can find a single tuple $((Y_j),N,C,A,\varphi)$ which
  applies to all objects~$K$ for which the properties~\emph{(a)}
  and~\emph{(b)} above hold for given integer~$N_1$ and subscheme
  $\mathbf{M}_1$.
\end{theorem}

\begin{remark}
  The point of the introduction of the extra parameter $N_1$ is that
  many ``natural'' objects $K$ on $W$ will be semiperverse modulo~$p$
  for all but finitely many primes. This is a general fact: for
  instance, consider the object $K=\bQl[1]$ on an irreducible
  subscheme~$X\subset \Aa^2_{\Zz}$. Then the object $K\otimes \Fp$ is
  semiperverse modulo~$p$ if $X\otimes\Fp$ is of dimension~$1$, but is
  not if $X\otimes\Fp$ is the same as $\Aa^2_{\Fp}$. The meaning of our
  statement is that it is enough then to avoid such primes in
  considering exponential sums.
\end{remark}


We have now stated a variety of \emph{algebraic} uniformity
statements. We will show how to use them to deduce our other main
results, concerning \emph{analytic} uniformity statements.

To state these results, we introduce a notion of height for varieties
and stratifications. First, define
\[
  \log^+(x)=\max(0,\log(x))
\]
for $x\geq 0$. The coefficient height of an integral polynomial
in finitely many variables $(T_i)$ is
\[
  h_c\Bigl(\sum_{i,j}a_{i,j}T_i^j\Bigr)=\log^+ \max_{i,j} |a_{i,j}|,
\]
and the coefficient height of a finite family $(f_j)_{j\in J}$ of
polynomials (in the same variables) is
\[
  h_c((f_j)_{j\in J})=\max_{j\in J}(h_c(f_j)).
\]

For a closed subscheme $X\subset \Aa^n_{\Zz}$, the height of~$X$ is
defined to be
\[
  h(X)=\deg(X)+\inf\{h_c((f_j)_{j\in J})\,\mid\, \text{the common zero
    set of the $f_j$ is~$X$}\}.
\]

Finally, for a finite family $\mathcal{X}=(X_j)$, we denote
\[
  h(\mathcal{X})=\max_{j} h(X_j).
\]

By considering ``universal'' families of subvarieties, we will deduce
for instance the following analytically uniform statement, which
corresponds to Theorem~\ref{th-fk} with $g=1$.

\begin{theorem}[Analytically uniform
  KL-stratifications]\label{th-quantitative-bounds}
  Let $n$ and $d$ be positive integers. Let $V$ be a closed subscheme of
  $\Aa^n_{\Zz}$ with $V_{\Cc}$ of dimension~$\leq d$ and given by
  vanishing 
  of~$\leq r$ polynomials of degree~$\leq \delta$.  Let
  $f\in\Zz[x_1,\ldots,x_n]$ be given.

  There exist positive integers~$N$ and~$C$, and a stratification
  $\mcX=(X_j)$ with~$X_j$ of relative dimension $\leq n-j$ and
  \[
    \Aa^n_\Zz \supset X_1 \supset \cdots \supset X_n,
  \]
  such that
  \begin{enumth}
  \item for all primes $p$ not dividing $N$ such that the dimension of
    $V$ modulo~$p$ is $\leq d$,
  \item and for all $\uple{h}\in X_i(\Ff_p)\setminus X_{i+1}(\Ff_p)$,
  \end{enumth}
  the bound
  \[
    \Bigl|\sum_{x\in V(k)}\psi(f(x)+h\cdot x)\Bigr|\leq C
    p^{\frac{d+i}{2}}
  \]
  holds.  The integer~$C$ is bounded in terms of $(n,d,r,\delta)$, the
  integer~$N$ is bounded in terms of $(n,d,r,\delta)$ and $\log(N)$ is
  bounded in terms of $(n,d,r,\delta)$ and linearly in terms of the
  height of~$V$, and the height of~$\mcX$ is bounded linearly in terms
  of the height of~$V$.
\end{theorem}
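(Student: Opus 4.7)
The plan is to derive analytic uniformity from the algebraic statement of Theorem~\ref{th-general-kl} by setting up a universal family that parametrizes all varieties $V$ of the given complexity, then using Noetherian induction to cover all parameters rather than just a dense open set. This is the standard mechanism by which "generic algebraicity" results upgrade to uniform analytic bounds.

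First I would parametrize the data. Let $R=\binom{n+\delta}{n}$ be the dimension of the space of polynomials of degree~$\leq\delta$ in~$n$ variables, and let $\mathbf{M}_0\subset\Aa^{rR}_{\Zz}$ denote the parameter space of $r$-tuples $(g_1,\ldots,g_r)$ of such polynomials. Let $W_0\subset\mathbf{M}_0\times\Aa^n_{\Zz}$ be the universal subscheme of pairs $(a,x)$ with $g_i(x)=0$ for $1\leq i\leq r$, together with its projection $\Delta_0\colon W_0\to\mathbf{M}_0$. I would stratify $\mathbf{M}_0$ into locally closed pieces on which the fiber $V_a$ of $\Delta_0$ has constant dimension~$d'\leq d$, restrict to the piece where $\dim V_a=d$, and apply Theorem~\ref{th-general-kl} with $f\equiv 0$ (or, if~$f$ is to be included, by enlarging the parameter space with the coefficients of~$f$). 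This produces a universal stratification $(Y_j^{(0)})_{j}$ of $\mathbf{M}_0\times\Aa^n_{\Zz}$ together with constants $N_0$, $C_0$ such that for all $a$ outside a codimension-one subvariety $Z_1\subset\mathbf{M}_0$, the fiberwise family $(Y_{j,a}^{(0)})$ is a KL-datum for $V_a$ with those constants.

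Next I would iterate by Noetherian induction: apply Theorem~\ref{th-general-kl} to the restriction of the universal family over each irreducible component of $Z_1$, producing new stratifications and new exceptional subvarieties $Z_2\subsetneq Z_1$ of strictly smaller dimension. The recursion terminates in at most $\dim\mathbf{M}_0$ steps, and at the end, every point $a\in\mathbf{M}_0$ lies in a unique piece where a KL-datum is assigned. I would take~$N$ to be the product of the $N_\alpha$'s produced at each level and~$C$ to be the maximum of the $C_\alpha$'s; since the number of levels, the number of irreducible components at each level, and the ambient dimensions depend only on $(n,d,r,\delta)$ (and on the universal KL-data associated to each level, themselves bounded in terms of the ambient data via Theorem~\ref{th-general-kl} applied to absolutely bounded families), the constants $N$ and~$C$ are bounded purely in terms of $(n,d,r,\delta)$.

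For the height bound, the point is that the universal subschemes $Y_j^{(\alpha)}$ live in a fixed ambient space $\mathbf{M}_\alpha\times\Aa^n_{\Zz}$ and are defined by polynomials in the variables $(a,x)$ whose total degrees and coefficient heights depend only on $(n,d,r,\delta)$. Given $V$ corresponding to a parameter $a\in\mathbf{M}_0$, the scheme $X_{j,a}=(Y_j^{(\alpha)})_a$ is cut out by specializing those polynomials at~$a$; the resulting polynomials in~$x$ have coefficients that are polynomial in the entries of~$a$, hence bounded in absolute value by $C'(1+\|a\|)^{D}$ for universal $C'$ and~$D$. Since $\|a\|$ is controlled by $h(V)$, this yields $h(\mcX_a)\ll h(V)$ with an implied constant depending only on $(n,d,r,\delta)$.

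The main obstacle I anticipate is the bookkeeping of the Noetherian induction: at each level~$\alpha$ one has to verify that the pieces of the new stratification $(Y_j^{(\alpha)})$ have the correct relative codimension $\leq n+\dim(\mathbf{M}_\alpha)-j$ and, upon fibering at~$a$, yield~$X_{j,a}$ of codimension~$\leq n-j$ in~$\Aa^n$, so that concatenating the stratifications attached to different strata of~$\mathbf{M}_0$ really produces a single chain $\Aa^n\supset X_1\supset\cdots\supset X_n$ with the correct bounds for every $h\in X_{i,a}\setminus X_{i+1,a}$. A secondary difficulty is decomposing each parameter stratum further according to the generic fiber dimension of $\Delta$ restricted to it, since Theorem~\ref{th-general-kl} requires a dominant morphism of controlled relative dimension; this is handled by one additional layer of stratification, but it must be threaded through the induction to ensure the final stratification of~$\mathbf{M}_0$ remains finite and constructible with bounds depending only on $(n,d,r,\delta)$.
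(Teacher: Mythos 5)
Your proposal follows essentially the same route as the paper's proof: pass to the universal family $W\subset\mathbf{M}\times\Aa^n_{\Zz}$ parameterizing $r$-tuples of degree-$\leq\delta$ polynomials, apply Theorem~\ref{th-general-kl} to get a KL-datum valid outside a codimension-one locus $\mathbf{M}_1$, iterate over the descending chain of exceptional loci (Noetherian induction terminates after at most $\dim\mathbf{M}$ steps), take $N$ as a product and $C$ as a maximum of the finitely many constants produced, and derive the height bound by specialization via Lemma~\ref{lm-simple}. The one respect in which you are more careful than the paper is in flagging the need to pre-stratify $\mathbf{M}$ by fiber dimension so that $\Delta$ is dominant of the correct relative dimension before Theorem~\ref{th-general-kl} can be invoked; the paper glosses over this, but your extra layer of stratification is harmless and folds into the same induction, so the concern you raise is legitimate and handled correctly.
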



This statement uses Theorem~\ref{th-general-kl}. One can similarly
exploit Theorem~\ref{th-general-kl-trace}, and this is indeed what is
needed for the work of Bonolis, Pierce and Woo. Since a general
statement would be quite involved, we will only record the particular
special case needed for~\cite{bpw} in Theorem~\ref{th-quantitative-bpw}
below.

\subsection*{Notation.} Given complex-valued functions $f$ and $g$
defined on a set $S$, we write $f \ll g$ if there exists a real number
$C \geq 0$ (called an ``implicit constant'') such that the inequality
$|f(s)|\leq C g(s)$ holds for all $s \in S$. We write $f\asymp g$ if
$f\ll g$ and~$g\ll f$. If $f$ and $g$ are defined on a topological
space~$X$, and $x_0\in X$, then we say that $f\sim g$ as $x\to x_0$ if
$\lim_{x\to x_0} f(x)/g(x)=1$.

For any complex number $z$, we write $e(z)=\exp(2i\pi z)$; the value
$e(a/q)$ is well-defined for $q\geq 1$ and $a\in\Zz/q\Zz$.

By \emph{variety} over a field~$k$, we mean a separated and reduced
scheme of finite type over~$k$, which may be reducible.

\subsection*{Acknowledgments.} L. Pierce and F. Thorne originally
raised the question of uniformity of the constant $C(V,f)$ in
stratification estimates for families of algebraic varieties, and
\'E. Fouvry emphasized the importance of establishing the algebraic
dependency of the stratification.

We thank L. Pierce further for encouragement and discussions concerning
the current text, in particular related to the work~\cite{bpw}. We also
thank W. Sawin for suggesting the iterative strategy used in the proof
of Theorems~\ref{th-quantitative-bounds} and~\ref{th-quantitative-bpw}.

We thank the referees for useful remarks, and we especially thank
F. Gundlach for pointing out a number of mistakes in the original
versions. We thank J. Fresán for help with some technical points of
algebraic geometry.

K.W. is partially supported by NSF GRFP under Grant
No. DGE-2039656. E.K. is partically supported by the SNF project
``Trace functions and arithmetic Fourier transforms'' (SNF grant number
SNF\_219220) and the joint ANR-SNF project ``Equidistribution in Number
Theory'' (SNF grant number 10.003.145 and ANR-24-CE93-0016).

\section{Examples of stratification results and
  applications}\label{sec-motivation}

We present here some applications of stratification results for
exponential sums in general. We begin by recalling how they occur
implicitly in classical results like the Burgess bound, and then
recall some of the results of Fouvry and Fouvry--Katz, before
presenting the recent work of Bonolis, Pierce and Woo which motivated
the search for the uniform statements.

\subsection{Burgess bound}

The celebrated Burgess bound states (in the special case of prime
moduli) that if $\chi$ is a non-trivial multiplicative character
modulo a prime number~$p$, then the bound
\[
  \sum_{1\leq n\leq N}\chi(n)\ll N^{1-1/r}p^{(r+1)/(4r^2)}(\log
  p)^{1/r}
\]
holds for $N\geq 1$ and~$r\geq 1$, where the implied constant is
absolute (see, for
instance~\cite[Th.\,12.6,\,(12.58)]{iwaniec-kowalski}). The key feature
of this bound is that, selecting~$r$ large enough, it provides a
non-trivial estimate provided $N$ is a bit larger than~$p^{1/4}$ in
logarithmic scale.  It is well-known that the proof depends essentially
on the Weil bound for character sums. In fact, it involves the whole
family of character sums of the type
\[
  \sum_{x\in\Ff_p}\chi((x-a_1)\cdots
  (x-a_r))\overline{\chi((x-b_1)\cdots\chi(x-b_r))}
\]
for $(a_1,\ldots,a_r,b_1,\ldots,b_r)\in\Ff_p^{2r}$. These satisfy the
optimal bound
\[
  \Bigl| \sum_{x\in\Ff_p}\chi((x-a_1)\cdots
  (x-a_r))\overline{\chi((x-b_1)\cdots\chi(x-b_r))}\Bigr|\leq
  (2r-1)p^{1/2}
\]
if at least one of the $a_i$ or $b_i$ occurs with multiplicity one among
all of them.  The final step of the proof of the Burgess bound then
depends on the simple stratification of the parameter space of all
$(a_i)$ and~$(b_i)$ according to whether this condition is satisfied or
not -- when it isn't the trivial bound (of size~$p$) is used for the
exponential sum.

Although this example is not of the exact same flavor as the results of
Fouvry--Katz--Laumon (and the proof is quite different), the general
philosophy remains the same. These ideas surrounding stratifications of
character sums have been recently studied by Xu \cite{xu}, with an
application to Burgess bounds for homogeneous forms by Pierce and Xu
\cite{pierce-xu}, and ongoing work in progress of Fouvry, Kowalski,
Michel and Sawin~\cite{fkms} for general bilinear forms with trace
functions.



\subsection{Class number properties}

One of the very first applications of the stratification theory of Katz
and Laumon was given by Fouvry in~\cite{fouvry2}, and concerns
divisibility properties of class numbers of quadratic fields -- one of
the most fascinating topics in all of number theory.  Fouvry's result
(see~\cite[Théorème]{fouvry2}) was improved significantly by Fouvry and
Katz~\cite[Cor.\,1.3]{fouvry-katz}, whose version we quote:

\begin{theorem}[Fouvry--Katz]
  There exist  $c_0>0$ and $x_0\geq 0$ such that for all $x\geq x_0$,
  the inequality
  \[
    |\{p\leq x\,\mid\, p\equiv 1\bmod{4}, p+4\text{ is squarefree and }
    3\nmid h(p+4)\}|\geq c_0\frac{x}{\log x}
  \]
  holds, where $h(d)$ denotes the class number of the quadratic field
  $\Qq(\sqrt{d})$.
\end{theorem}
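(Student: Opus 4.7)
The plan is to combine a routine lower bound on the cardinality of
\[ \mathcal{P}(x) = \{p \leq x \,\mid\, p \equiv 1 \mods{4}, \ p+4 \text{ squarefree}\} \]
with an upper bound of the same order of magnitude on the moment sum
\[ S(x) = \sum_{p\in\mathcal{P}(x)} \bigl(3^{r_3(p+4)}-1\bigr), \]
where $r_3(d)$ denotes the $3$-rank of the ideal class group of $\Qq(\sqrt{d})$. Since $3\mid h(d)$ forces $3^{r_3(d)}-1\geq 2$, the quantity to be bounded below is at least $|\mathcal{P}(x)| - \tfrac{1}{2}S(x)$, so it will suffice to have $|\mathcal{P}(x)|\gg x/\log x$ together with $S(x)\leq Cx/\log x$ for some $C$ smaller than twice the admissible constant in the lower bound.

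The lower bound on $|\mathcal{P}(x)|$ is standard: a linear sieve for the squarefree condition on $p+4$, combined with the Siegel--Walfisz theorem to handle $p \equiv 1 \mods 4$, produces $|\mathcal{P}(x)|\gg x/\log x$ with an effective constant. For $S(x)$, I would use the Delone--Faddeev parametrization of $3$-torsion of class groups by integral binary cubic forms; since $p+4$ is positive, the Scholz reflection principle first transports the problem to the imaginary quadratic field $\Qq(\sqrt{-3(p+4)})$, where the parametrization is more directly applicable. The quantity $3^{r_3(p+4)}-1$ is thereby realised as a count, in a fixed fundamental domain for $\SL_2(\Zz)$, of integer binary cubic forms $(a,b,c,d)$ with discriminant equal to a prescribed multiple of $p+4$. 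Hence $S(x)$ becomes a count of lattice points $(a,b,c,d)\in\Zz^4$ subject to $\disc(a,b,c,d)=m(p+4)$ with $p$ prime and $p+4$ squarefree.

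Detecting primality by a Heath-Brown (or Vaughan) identity produces Type~I and Type~II pieces; completing in the ambient modulus by Fourier inversion then introduces additive characters of the shape $\psi(h_1 a+\cdots+h_4 d)$, turning the bilinear sums into exponential sums of exactly the form
\[ \sum_{x \in V(\Fp)} \chi(g(x))\psi(f(x)+h_1 x_1+\cdots+h_n x_n) \]
to which Theorem~\ref{th-fk} applies. For $h$ outside the first stratum $X_1$ one has square-root cancellation; the nested stratification $X_1\supset\cdots\supset X_n$ controls the degenerate fibres, since each $X_j$ has codimension $\geq j$ and the loss on $X_j\setminus X_{j+1}$ is only the factor $p^{(j-1)/2}$. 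Summing the resulting estimates over $h$ produces $S(x)\ll x/\log x$, with an implicit constant tunable by the sieve parameters, which completes the argument.

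The main obstacle is the verification that, after the sieve decomposition and Fourier completion, the resulting sums really fall under the hypotheses of Theorem~\ref{th-fk}: one must check that $V$, $f$ and $g$ can be chosen uniformly, that the stratification constants $C$ and~$N$ are bounded independently of $p$, and that the cumulative loss over the strata --- combined with the implicit constants from the sieve and the error terms from the level of distribution --- still leaves the resulting $C$ strictly below twice the constant in the lower bound for $|\mathcal{P}(x)|$. The parity condition $p\equiv 1\mods 4$ is then inserted by an elementary character decomposition and presents no further essential difficulty.
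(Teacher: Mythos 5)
You should note first that the paper does not actually prove this theorem: it is quoted directly from \cite[Cor.~1.3]{fouvry-katz}, and the paragraph immediately following the statement explicitly warns that the proof ``also depends on a refinement of Theorem~\ref{th-fk}, namely \cite[Th.~1.2]{fouvry-katz}, involving the non-vanishing of the so-called `$A$-number' for a specific algebraic variety.'' Your proposal, however, claims to close the argument using Theorem~\ref{th-fk} alone, and this is precisely where a genuine gap opens up.

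Your framing --- the reduction to an upper bound on $S(x) = \sum_{p\in\mathcal{P}(x)}(3^{r_3(p+4)}-1)$, the Scholz reflection to the imaginary side, the Delone--Faddeev translation to counting integral binary cubic forms with prescribed discriminant, the sieve decomposition and Fourier completion producing sums of the shape $\sum_{x\in V(\Fp)}\psi(f(x)+h\cdot x)$ --- is structurally the right strategy, and it is essentially what Fouvry used already in \cite{fouvry2}. The difficulty is the final quantitative step. On the generic stratum $h\notin X_1(\Fp)$, Theorem~\ref{th-fk} gives only $|S(h)|\leq Cp^{d/2}$; once summed over the relevant $h$, this contributes a term of the same order of magnitude as the main term, so the numerical game of proving that $S(x)$ stays strictly below $2|\mathcal{P}(x)|$ is not decided by that inequality alone. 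The refinement of Fouvry and Katz (their Theorem~1.2) attaches to the data $(V,f)$ an integer invariant, the $A$-number, which controls the genuine asymptotic of the sums on the generic stratum; for the specific variety of cubic forms arising in the class number problem they analyse this invariant and use the resulting sharper information to pin down the leading constant. Your proposal contains no discussion of the $A$-number, nor any mechanism that could substitute for it --- the sentence ``with an implicit constant tunable by the sieve parameters'' glosses over exactly this point. Without that additional input the argument, as sketched, at best recovers something like Fouvry's earlier result rather than the improved Fouvry--Katz statement quoted here, and in any case it cannot be considered complete.
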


We note, however, that the proof of this theorem also depends on a
refinement of Theorem~\ref{th-fk}, namely~\cite[Th.\,1.2]{fouvry-katz},
involving the non-vanishing of the so-called ``$A$-number'' for a
specific scheme. We will not discuss any issue related to this invariant
in this paper, but it would be interesting to understand if the
corresponding results can be extended in families.

\subsection{Equidistribution}

One of the consequences of Theorem~\ref{th-fk} proved by Fouvry and
Katz is a result of equidistribution of fractional parts of values of
general polynomials modulo primes. Precisely, let
\[
  P_1,\ldots, P_r\in \Zz[x_1,\ldots ,x_n]
\]
be polynomials whose linear span does not contain a non-zero
polynomial of degree $\leq 1$. In~\cite[Corollary\,1.4]{fouvry-katz},
Fouvry and Katz prove that for any function
$\phi\colon \Rr^+\rightarrow \Rr$ such that
$\phi(x)\rightarrow \infty$, the set
\[
  \left\{p^{-1}\left(P_1(x),\hdots, P_r(x)\right)\,\mid\,
    x=(x_i)\in\Zz^n \text{ with } 0\leq x_1,\hdots,x_n\leq p^{1/2}
    \log(p) \phi(p) \right\}
\]
becomes equidistributed modulo $1$ as $p\to\infty$.

Theorem \ref{th-quantitative-bounds} allows us to quantify the rate of
equidistribution with explicit dependence on the polynomials
$(P_i)$. Define first
\[
  w(x)=\sqrt{x}(\log x)\phi(x).
\]

Moreover, for $\alpha=(\alpha_i)$ and $\beta=(\beta_i)$ in $[0,1]^r$,
with $\alpha_i\leq \beta_i$ for all~$i$, define the discrepancy by
\begin{multline*}
  D(\alpha, \beta;p) = \Bigl|\ \Bigl|\{x=(x_i)\in \Zz^n\,\mid\, 0\leq
  x_i\leq w(p), \alpha_i\leq \Bigl\{\frac{P_i(x)}{p}\Bigr\}\leq \beta_i,
  \text{ for } 1\leq i\leq r\Bigr\}\Bigr| -
  \\
  w(p)^n\prod_{i=1}^r(\beta_i-\alpha_i) \Bigr|,
\end{multline*}
where $\{t\}$ denotes the fractional part of a real number~$t$.  A
multidimensional version of the Erd\H{o}s-Tur\'an inequality (see,
e.g.,~\cite[Lemma\,2]{gsz}) implies that for any $K\geq 1$, and for the
nontrivial additive character $\psi(n) = e(n/p)$ of $\Ff_p$, we have
\begin{equation}\label{eq-et}
  D(\alpha,\beta;p)
  \ll \frac{w(p)^n}{K} +
  \sum_{\substack{0<A_i\leq K\\ i=1,\hdots,r}}
  \prod_{i=1}^r\frac{1}{\max(k_i,1)} \Bigl|\sum_{|x|_{\infty}\leq
    w(p)}\psi(A_1P_1(x)
  + \hdots + A_rP_r(x))\Bigr|,
\end{equation}
where the implied constant depends only on~$r$.

For $(A_1,\hdots,A_r)\in\Zz^r$, we define
\begin{equation*}
  S(A_1,\hdots,A_r) =  \sum_{|x|_{\infty}\leq
    w(p)}\psi(A_1P_1(x)
  + \hdots + A_rP_r(x)).
\end{equation*}    

Following the proof of \cite[Corollary\,1.4]{fouvry-katz}, using the
completion method, we determine that for any KL-stratification
$\mathscr{X} = (X_j)$ associated to the family of exponential sums
\[
  \sum_{x\in\Ff_p^n}\psi(A_1P_1(x) + \hdots + A_rP_r(x)+h\cdot x),
\]
the following bound holds:
\begin{equation*}
  S(A_1,\hdots,A_r) \ll
  \frac{1}{p^n} \sum_{j=1}^n p^{\frac{n}{2}+\frac{j-1}{2}}
  \sum_{\mathbf{h}\in X_{j-1}(\Ff_p)} \min(w(p), \|h_i/p\|^{-1}).
\end{equation*}

By Theorem \ref{th-quantitative-bounds}, we know that one may assume
that $\log^+ h(\mathscr{X}) \ll_{n,r,D} h_c((P_1,\hdots,P_r)),$ where
$D$ is the total degree of $P_1\hdots P_r.$ Thus, by \cite[Lemma
9.5]{fouvry-katz}, we deduce that there exists an integer
$C_{n,r,D}\geq 1$ such that
\begin{equation*}
  S(A_1,\hdots, A_r) \ll C_{n,r,D}^{h_c((P_1,\hdots, P_r))} p^{1/2}
  w(p)^{n-1}\log(p).
\end{equation*}

Using this bound in the inequality~(\ref{eq-et}), we obtain
\begin{equation*}
  D(\alpha,\beta;p) \ll
  \frac{w(p)^n}{K} +
  \log(K)^r \cdot C_{n,r,D}^{h_c((P_1,\hdots,P_r))} \cdot
  p^{1/2} w(p)^{n-1}\log(p).
\end{equation*}

Thus we have the following quantitative version of the equidistribution
result of Fouvry and Katz:

\begin{corollary}
  Let $n$ and $r$ be positive integers and let $P_1$, \ldots,
  $P_r \in \Zz[x_1,...,x_n]$ be polynomials whose linear span contains
  no non-zero polynomial of degree $\leq 1$.  Let $\phi\colon\Rr\to \Rr$
  be a function such that $\phi(x) \rightarrow\infty$ as
  $x\rightarrow\infty$.  Then defining
  \[
    w(x) = \sqrt{x} \log(x) \phi(x)
  \]
  we have the bound
  \begin{multline*}
    \Bigl|\Bigl\{x\in \Zz^n\,\mid\, 0\leq x_i\leq w(p), \alpha_i\leq
    \Bigl\{\frac{P_i(x}{p}\Bigr\}\leq \beta_i, \text{ for } 1\leq i\leq
    r\Bigr\}\Bigr| \\ = w(p)^n\prod_{i=1}^r(\beta_i-\alpha_i) +
    o_{n,r,D}(C_{n,r,D}^{h_c((P_1,\hdots, P_r))}\cdot w(p)^n),
  \end{multline*}
  for $\alpha$ and $\beta\in [0,1]^r$, where $C_{n,r,D}>0$ is a
  positive constant dependening only on $n$, $r$ and the total
  degree~$D$ of the product of the polynomials $P_i$.
\end{corollary}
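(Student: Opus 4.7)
The plan is to observe that the paragraphs immediately preceding the statement already carry out essentially all the substantive work: the multidimensional Erdős--Turán inequality~\eqref{eq-et}, the completion-method estimate on each exponential sum $S(A_1,\ldots,A_r)$, and the height bound $\log^+ h(\mathscr{X})\ll_{n,r,D} h_c((P_1,\ldots,P_r))$ coming from Theorem~\ref{th-quantitative-bounds} are assembled into the estimate
\[
  D(\alpha,\beta;p) \ll \frac{w(p)^n}{K} + \log(K)^r \cdot C_{n,r,D}^{h_c((P_1,\ldots,P_r))} \cdot p^{1/2} w(p)^{n-1}\log(p),
\]
valid for every $K\geq 1$. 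The only remaining task is to choose $K=K(p)$ and convert this two-term bound into an asymptotic $o$-statement, using the assumption $\phi(p)\to\infty$.

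Substituting $w(p)=\sqrt{p}\log(p)\phi(p)$ rewrites the second summand above as $\log(K)^r\cdot C_{n,r,D}^{h_c((P_1,\ldots,P_r))}\cdot \phi(p)^{-1}w(p)^n$. I would therefore take $K=K(p)\to\infty$ slowly enough that $\log(K)^r=o(\phi(p))$; a concrete choice is $K(p)=\exp\bigl(\phi(p)^{1/(2r)}\bigr)$, which works precisely because $\phi(p)\to\infty$. With this choice the first summand is $o(w(p)^n)$ and the second is $o\bigl(C_{n,r,D}^{h_c((P_1,\ldots,P_r))}\cdot w(p)^n\bigr)$. Subtracting the main term $w(p)^n\prod_i(\beta_i-\alpha_i)$ from the counting function then yields the advertised asymptotic.

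The one point I would verify carefully--and expect to be the main (if minor) obstacle--is that the bound on $S(A_1,\ldots,A_r)$ obtained from the KL-stratification is uniform over all $(A_1,\ldots,A_r)\in[-K,K]^r\setminus\{0\}$, with implicit constants depending only on $n$, $r$, $D$ and $h_c((P_1,\ldots,P_r))$ and \emph{not} on the $A_i$. This is achieved by applying Theorem~\ref{th-quantitative-bounds} not to each polynomial $\sum_i A_i P_i$ separately but to the \emph{universal} polynomial $\sum_i A_i P_i + h\cdot x$, with $(A_1,\ldots,A_r,h)\in\Aa^{r+n}_{\Zz}$ jointly playing the role of the linear parameter vector in the theorem. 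The hypothesis that the linear span of $P_1,\ldots,P_r$ contains no non-zero polynomial of degree $\leq 1$ is exactly what guarantees that $\sum_i A_i P_i$ has degree $\geq 2$ for every $(A_i)\not\equiv 0\pmod p$, so that the stratification produces genuine square-root-type savings rather than collapsing to a trivial bound. Once this uniform formulation is in place, the rest is bookkeeping of constants to confirm that no spurious $K$-dependence enters the final estimate outside the explicit $\log(K)^r$ factor.
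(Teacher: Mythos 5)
Your proposal is correct and follows the paper's own route: the displayed bound
\[
  D(\alpha,\beta;p) \ll \frac{w(p)^n}{K} + \log(K)^r \cdot C_{n,r,D}^{h_c((P_1,\ldots,P_r))} \cdot p^{1/2} w(p)^{n-1}\log(p)
\]
already appears verbatim in the text preceding the corollary, and the only step the paper leaves implicit is the choice $K=K(p)\to\infty$ with $\log(K)^r=o(\phi(p))$, which you supply. Your uniformity concern is well taken and correctly resolved: one must stratify the family parameterised by $(h,A)\in\Aa^{n+r}$ jointly (equivalently, apply Theorem~\ref{th-quantitative-bounds} to the graph $V=\{(x,P(x))\}\subset\Aa^{n+r}$ with $f=0$), so that the height of~$\mcX$ is controlled by $h_c((P_1,\ldots,P_r))$ alone and picks up no stray $\log K$ from the coefficients~$A_i$.
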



\subsection{Integral points on thin sets}\label{ssec-thin}

The question of quantifying how a KL-stratification~$\mathcal{X}$
depends on the underlying subscheme~$V$, function $f$, and trace
function $t(x)$ arises in work of Bonolis, Pierce, and Woo~\cite{bpw}
concerning the polynomial sieve and bounds on counting integral points
on type II thin sets, namely subsets $M \subset \mathbf{P}^{n-1}(\Qq)$
contained in the image $\pi(Z(\Qq))$ for an irreducible projective
algebraic variety $Z$ defined over~$\Qq$ of dimension $n-1$ and a
generically surjective morphism $\pi\colon Z \to \mathbf{P}^{n-1}$ of
degree $D \geq 2$.
This can also be generalized to the affine case, and Serre defines in
\cite[p.\,122]{Ser97} an affine thin set of type II in
$\mathbf{A}^n(\Qq)$ to be a subset of the form
\[
  M = \{\uple{x} \in \Qq^n\,\mid\, \exists y \in \Qq, F(y,\uple{x}) =0,
  \text{$y$ not a pole of any coefficient of $F$}\},
\]
for some absolutely irreducible polynomial
$F \in \Qq(x_1,\ldots,x_n)[y]$ of degree~$D\geq 2$ (in the variable
$y$).

Serre conjectured in \cite[p.\,178]{Ser97} that if
$M\subset \mathbf{P}^{n-1}(\Qq)$ is a projective thin set of type II,
then the estimate 
\begin{equation*}
  |\{\uple{x}\in M\,\mid\,  H(\uple{x})\leq B\}| \ll_M B^{n-1}\log(B)^c,
\end{equation*}
holds for $B\geq 2$, where the height function is defined by
$H(\uple{x})=\max_{1 \leq i \leq n}|x_i|$ for
$\uple{x} = [x_1: \hdots : x_n] \in \mathbf{P}^{n-1}(\Qq)$.

The question then arises if an analogous bound would also hold for
affine thin sets of type II. This is known to be false in general, and
indeed counterexamples are easy to describe. For instance, if we define
$F(y,\uple{x})=y^2 - (x_1+x_2)$, it is elementary that
\[
  |\{\uple{x}\in \Zz^2\mid H(\uple{x})\leq B, \exists y \in \Zz,
  F(y,\uple{x})=0\}|\asymp B^{3/2}
\]
for $B\geq 1$.

Nevertheless, for \emph{suitable} polynomials $F\in \Zz[y,x_1,...,x_n]$
with $\deg_y F\geq 2$, it is expected that there should exist some
constant $c=c(F)$ such that
\[
  |\{\uple{x}\in \Zz^n\,\mid\, H(\uple{x})\leq B,\exists y\in \Zz,
  F(y,\uple{x}) = 0\}| \ll_{F} B^{n-1}( \log B)^c.
\]

In the paper~\cite{bpw} of Bonolis, Pierce and Woo, a certain set of
polynomials that are ``genuinely polynomials in $n+1$ variables'' are
considered and one of the aims is to track explicitly the dependence on
$F$ in the implied constant for the bound. The problem is studied using
the polynomial sieve; after an application of the sieve, sums of the
following form arise:
\begin{equation}\label{eq-bpw-sums}
  \sum_{\substack{p\neq q \leq P \\ p,q \textrm{ prime}}}
  \sum_{\substack{\uple{u}\in \Zz^n\\ |\uple{u}|\leq P^2/B}}
  \left|S_F(\overline{q}\uple{u},p)\right|
  \left|S_F(\overline{p}\uple{u},q)\right|,
\end{equation}
for some parameter~$P\geq 1$, where the exponential sum
$S(\uple{h},p)$ is defined as
\begin{equation}\label{eq-sfp}
  S_F(\uple{h},p) = \sum_{\uple{x}\in \Ff_p^n} \psi(\uple{h}\cdot
  \uple{x}) \cdot r_F(\uple{x}), \hspace{1cm} r_F(\uple{x}) =
  \sum_{\substack{y\in \Ff_p\\ F(y,\uple{x})=0}}1,
\end{equation}
and~$\bar{p}$ (resp.~$\bar{q}$) is the inverse of~$p$ modulo
$q$ (resp. of~$q$ modulo~$p$).

A key point is that $r_F$ is a trace function. By the extension of
Theorem~\ref{th-fk} to trace functions
(namely~\cite[Th.\,3.1]{fouvry-katz}), there exists a KL-datum
$(\mathcal{X},N,C)$ for the sums $S_F(\uple{h},p)$, and this precise
classification of the size of $|S(\uple{h},p)|$ allows
for~(\ref{eq-bpw-sums}) to be rewritten as
\[
  C^2\sum_{\substack{p\neq q \leq P \\ p,q \textrm{ prime}}}
  \sum_{j=1}^n \sum_{k=1}^n
  p^{(n+j-1)/2}q^{(n+k-1)/2}\sum_{\substack{\uple{u}\in \Zz^n\\
      |\uple{u}|\leq P^2/B\\ \overline{q}\uple{ u}\in X_j(\Ff_p)\\
      \overline{p}\uple{u}\in X_k(\Ff_q)}}1.
\]

Thus, the KL-stratification turns the question about exponential sums
into a problem of counting points on the strata.  To estimate
\textit{explicitly the dependence on $F$} for the internal count of
integral points whose reductions lie in $X_j(\Ff_p)$ and $X_k(\Ff_q)$,
the methods of~\cite{bpw} requires understanding how certain properties
of these strata, such as the degree and the ``height'' $h(\mathcal{X})$,
depend on the original trace function $r_F$; hence, the estimates of
Theorem~\ref{th-quantitative-bounds} become necessary.  Beyond the above
application, the authors expect that the quantitative version of
Theorem~\ref{th-fk} given in results like
Theorem~\ref{th-quantitative-bounds} will give uniform versions of other
applications of the general theory of Katz--Laumon stratifications to
analytic number theory.

We conclude by recording the statement which will be directly cited
in~\cite{bpw}.

\begin{theorem}\label{th-quantitative-bpw}
  Let $n$ and $D$ be positive integers. Let $F\in \Zz[y,x_1,,...,x_n]$
  be a polynomial of total degree $\leq D$ which is monic
  in~$y$.
  There exist positive integers~$N$ and~$C$ and a
  stratification $\mcX=(X_j)$ with
  \[
    \Aa^n_\Zz \supset X_1 \supset \cdots \supset X_n,
  \]
  such that $X_i$ is a homogeneous subscheme of codimension $\geq i$ and
  such that for all primes $p$ not dividing $N$ and for all
  $\uple{h}\in X_i(\Ff_p)\setminus X_{i+1}(\Ff_p)$, the bound
  \[
    |S_{F}(\uple{h},p)|\leq C p^{\frac{n+i}{2}}
  \]
  holds, and moreover, the data $(\mcX,N,C)$ satisfies the following
  bounds:
  \begin{enumth}
  \item $C$ is bounded in terms of $n$ and~$D$ only,
  \item $N$ is bonded in terms of~$n$ and~$D$ and $\log(N)$ is bounded
    linearly in terms of the height of~$F$,
  \item the degree of each~$X_i$ is bounded in terms of $n$
    and~$D$ only,
  \item the number of irreducible components of $X_i$ is $\ll_{n,D} 1$,
  \item we have $h(\mathcal{X})\ll_{n,D} h_c(F)$.
  \end{enumth}

  In particular, one can write $X_j$ as the common zero set of
  polynomials $(G_{j,1},...,G_{j,k})$ in such a way that
  $h_c(G_{j,s})\ll_{n,D} h_c(F)$ for every indexing pair $(j,s)$.
\end{theorem}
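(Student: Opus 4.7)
The plan is to deduce Theorem~\ref{th-quantitative-bpw} from Theorem~\ref{th-general-kl-trace} applied to a universal family of polynomials, combined with the iterative strategy of Sawin acknowledged in the introduction to upgrade the ``outside codimension one'' conclusion into a bound valid for every~$F$.

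Let $\mathbf{M} \subset \Aa^{M_0}_\Zz$ be the affine space parameterizing polynomials $F \in \Zz[y, x_1, \ldots, x_n]$ of total degree $\leq D$ which are monic in~$y$, and let $I = \{(F, y, \uple{x}) \in \mathbf{M} \times \Aa^{n+1} : F(y, \uple{x}) = 0\}$ be the universal zero scheme. Writing $\rho\,:\, I \to W := \mathbf{M} \times \Aa^n$ for the projection forgetting~$y$, monicity in~$y$ makes $\rho$ finite of generic degree~$D$, so $\rho_* \bQl$ is a constructible sheaf of rank $\leq D$ whose trace function at $(F, \uple{x})$ is exactly $r_F(\uple{x})$. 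After the appropriate shift and Tate twist the resulting object~$K$ on $W$ is semiperverse, mixed of weights $\leq 0$, adapted to the canonical finite stratification $\mcW$ of $W$ that trivializes the ramification of $\rho$, and fibrewise $V_a$-transverse over a dense open of $\mathbf{M}$ via Example~\ref{ex-transverse}(2). Applying Theorem~\ref{th-general-kl-trace} with this data produces integers $N_0$, $C_0$ depending only on $(n, D)$, and a universal stratification $W \supset Y_1 \supset \cdots \supset Y_{n + \dim \mathbf{M}}$ such that for every $F$ outside some codimension-one closed subscheme $S_1 \subset \mathbf{M}$, the fibrewise slices $X_{j, F} := \{\uple{h} : (F, \uple{h}) \in Y_j\}$ form a KL-datum for the family of sums $S_F(\uple{h}, p)$.

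To handle $F \in S_1$, I iterate: decompose $S_1$ into its irreducible components and reapply Theorem~\ref{th-general-kl-trace} to each with the restricted universal family, getting stratifications valid on each component outside a further codimension-one subset, and continue. Since $\dim \mathbf{M} = O_{n, D}(1)$ and each round drops the ambient dimension by at least one, the recursion terminates after $O_{n, D}(1)$ steps, producing finitely many universal stratifications $\{Y^{(s)}_\bullet\}_s$ whose domains of validity together cover $\mathbf{M}$. I define $X_{j, F}$ as the union of the slices $Y^{(s)}_j \cap (\{F\} \times \Aa^n)$ over all strata~$s$ whose domain of validity contains~$F$; this yields a KL-datum at every $F$ with $(N, C) \ll_{n, D} 1$. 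Each $Y^{(s)}_j$ is cut out in $\mathbf{M}^{(s)} \times \Aa^n$ by finitely many polynomials of bi-degree $O_{n, D}(1)$ in $(F, \uple{h})$, so specializing the $\mathbf{M}^{(s)}$-coordinates to those of $F$ yields defining polynomials for $X_{j, F}$ of coefficient height $\ll_{n, D} h_c(F)$, and B\'ezout uniformly controls both $\deg X_{j, F}$ and the number of its irreducible components. The homogeneity of each $X_i$ can be arranged by enlarging the parameter space to $\mathbf{M} \times \Gm$ and acting by the substitution $F(y, \uple{x}) \mapsto F(y, \lambda^{-1} \uple{x})$; a change of variables gives $S_F(\lambda \uple{h}, p) = S_{F_\lambda}(\uple{h}, p)$ with $F_\lambda := F(y, \lambda^{-1}\uple{x})$, so the universal strata over $\mathbf{M} \times \Gm$ are $\Gm$-equivariant in~$\uple{h}$, and taking the Zariski closure of the union of slices over the $\Gm$-orbit of $F$ produces conical $X_{j, F}$ while preserving the bounds.

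The main obstacle is the verification in the first step: arranging the correct normalization (shift and Tate twist) of $\rho_* \bQl$ so that the fibrewise semiperversity, weight, and $V_a$-transversality hypotheses of Theorem~\ref{th-general-kl-trace} hold over a dense open of $\mathbf{M}$, and so that the resulting exponent on the KL-bound matches $(n + i)/2$ after translating back from the normalized trace function to the unnormalized sum $S_F$. A secondary bookkeeping issue is the Sawin-type iteration: the domains of validity of $\{Y^{(s)}_\bullet\}$ must tile $\mathbf{M}$, and the union $X_{j, F}$ of finitely many slices must retain codimension $\geq j$, which is where the finite ($O_{n,D}(1)$) number of iterations becomes essential in keeping all uniform constants under control.
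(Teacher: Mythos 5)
Your overall strategy matches the paper's: apply Theorem~\ref{th-general-kl-trace} to the universal family of monic polynomials $F$ of degree~$\le D$, represent $r_F$ as the trace function of a suitably shifted and twisted pushforward along the finite universal zero morphism, iterate on the codimension--$1$ exceptional locus finitely many times ($O_{n,D}(1)$ times since $\dim \mathbf M$ is bounded), and bound heights by specializing the fixed universal equations via Lemma~\ref{lm-simple}. Your $\rho_*\Qlb$ is the same as the paper's $R\pi_!\Qlb$ (finite morphism), and the normalization issue you flag is exactly the $[\dim W](\dim W/2)$ shift and twist the paper uses, with perversity from~\cite[Cor.\,4.1.3]{BBD-pervers}. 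All of this is fine, and your iterative tiling of $\mathbf M$, and the observation that B\'ezout plus specialization controls the degree, component count, and coefficient heights of the sliced strata, agree with the paper's argument.

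The genuine gap is in the homogeneity step. The change of variables $S_F(\lambda\uple h, p) = S_{F_\lambda}(\uple h, p)$ relates the stratification of $F$ to that of the \emph{different} polynomial $F_\lambda = F(y,\lambda^{-1}\uple x)$. For a \emph{fixed} $F$, scaling $\uple h$ by $\lambda$ moves you to a different fiber of $\Delta$, not back to the same one, so there is no a priori cone structure on the slice $X_{j,F}$. Theorem~\ref{th-general-kl-trace} does not promise that the universal $Y_j\subset (\mathbf M\times\Gm)\times\Aa^n$ is equivariant under $\mu\cdot(F,\lambda,\uple h)=(F,\lambda\mu,\mu^{-1}\uple h)$, so the assertion ``the universal strata over $\mathbf M\times\Gm$ are $\Gm$-equivariant in $\uple h$'' needs justification. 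Moreover, ``the Zariski closure of the union of slices over the $\Gm$-orbit of $F$'' is a priori a $1$-parameter family of subvarieties of dimension $\le n-j$, whose closure can have dimension $n-j+1$, breaking the codimension bound; and even if it did not, the orbit closure is not evidently a cone, for the same reason that the $\Gm$ action twists the $\mathbf M$-coordinate along with $\uple h$. The paper instead invokes the argument of~\cite[p.\,131]{fouvry-katz}: for sums of the specific form $\sum_x \psi(\uple h\cdot\uple x)\,t(\uple x)$, each stratum $X_{j,F}$ may be replaced by the (smaller, hence still codimension $\ge j$) homogeneous subvariety cut out by the homogeneous components of a generating set of its ideal, and the Katz--Laumon machinery guarantees the bound survives this replacement. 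That replacement preserves the height and degree bounds since the homogeneous components of the specialized generators have the same degree and coefficient-height bounds. You should either reproduce that argument or fix the $\Gm$ construction; as written, the conical property of $X_i$ is not established.
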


Since the proof requires the same ideas as that of
Theorem~\ref{th-quantitative-bounds}, we will give it in
Section~\ref{sec-quantitative-bounds}.


\section{Examples of stratifications for explicit families}\label{sec-examples}

We present here a few examples in which suitable stratifications can
be explicitly computed.

\subsection{Linear spaces}

As an example of Theorem \ref{th-fk} and the indexing of the strata
$(X_j)$, let us consider the example when $f=0$, $g=1$, and $V$ is a
linear subspace of $\Aa^n_\Zz$ of relative dimension $d=n-2$. Let
$V^\perp$ denote the ``orthogonal'' space
\[
  V^{\perp}=\{h\in \Aa^n\,\mid\, h_1x_1+\cdots +h_nx_n=0\text{ for all }
  x\in V\},
\]
which is a linear subspace of relative dimension~$2$. We have
\begin{equation}\label{eq: planar example}
  \sum_{x\in V(\Ff_p)} \psi(h_1x_1+\cdots +h_nx_n) = \begin{cases}
    p^{n-2}, & h\in V^\perp(\Ff_p) \\ 
    0, & \text{otherwise}.
  \end{cases} 
\end{equation}

We can easily  interpret the above in terms of a KL-stratification,
taking 
$$
X_1 = X_2 = \cdots = X_{n-2} = V^\perp, \text{ and }X_{n-1} = X_n =
\emptyset,
$$
and $N=C=1$. Then Theorem \ref{th-fk} is verified via the computation
(\ref{eq: planar example}), observing that for $j \in \{n-1,n\}$, the
estimate (\ref{eq-sum}) is the trivial bound.

\subsection{Quadratic forms in $n$ variables}
\label{quadratic}

Let now $F=x_{1}^{2}+\cdots+x_{n}^2$ be a diagonal quadratic form in
$n$ variables. Assume that~$p$ is odd. Taking $\psi(x)=e(x/p)$, and
defining
\[
  T(F,\uple{v};p)=
  \sum_{\substack{\uple{x}\in\Ff_{p}^{n}\\F(\bfx)=0}}\psi(\uple{v}\cdot\bfx)
\]
we obtain
\begin{align*}
  T(F,\uple{v};p) &=\frac{1}{p}\sum_{a\in\Ff_{p}^{\times}}\sum_{\bfx\in
    \Ff_{p}^{n}}\psi(aF(\bfx)+\uple{v}\cdot\bfx)+p^{n-1}
  \delta_{\uple{v}=\boldsymbol{0}}\\
  &=\frac{1}{p}\sum_{a\in\Ff_{p}^{\times}}\prod_{i}
  \sum_{x}\psi(ax^{2}+v_{i}x)+p^{n-1}\delta_{\uple{v}=\boldsymbol{0}}
  \\&=\frac{\tau^{n}}{p}\sum_{a\in\Ff_{p}^{\times}}\chi_2(a)^{n
  }\psi(-F(\uple{v})/(4a))+p^{n-1}\delta_{\uple{v}=\boldsymbol{0}},
\end{align*}
where $\chi_2$ is the Legendre symbol modulo $p$ and $\tau$ is the Gauss
sum associated to $\chi_2$. From this computation, it follows that a
Katz--Laumon stratification in the case of diagonal quadratic forms
depends on the number of variables:
\begin{itemize}
\item[$i)$] when $n$ is odd, we can take
  \begin{itemize}
  \item[$\bullet$] $X_1 = X_2 = \cdots = X_{n-1} = \{\boldsymbol{0}\}$,
    since for all $\uple{v}\neq \boldsymbol{0}$, we have
    $T(F,\uple{v};p) = O(p^{\frac{n-1}{2}})$ and
    $T(F,\boldsymbol{0};p) = p^{n-1}$.
  \end{itemize}
\item[$ii)$] when $n$ is even, we can take
  \begin{itemize}
  \item[$\bullet$]
    $X_{1}=\{\uple{v}\in\Ff_{p}^{n}\,\mid\, \uple{v}\not=0\text{ and }
    F(\uple{v})=0\}$.
  \item[$\bullet$] $X_2=\cdots = X_{n-1}=\{\boldsymbol{0}\}$.
  \end{itemize}
  Indeed, for $\uple{v}\notin X_1$, so that $\uple{v}\not=0$ and
  $F(\uple{v})\not=0$, we have $T(F,\uple{v};p) = O(p^{\frac{n-2}{2}})$,
  and for $\uple{v}\in X_2\setminus X_1$, so that $F(\uple{v})=0$ but
  $\uple{v}\not=0$, we get $T(F,\uple{v};p)=O(p^{n/2})$. Finally, for
  $\uple{v}=0$, we have
  \[
    T(F,0;p)=p^{n-1}+(1-1/p)\tau^n.
  \]
\end{itemize}

Observe also that since the stratification remains unchanged when
passing to finite extensions of $\Ff_{p}$, one deduces that the
stratification for any full rank quadratic form $F$ in $n$ variables
satisfies the same properties.

\subsection{The case of smooth forms}

Let now $F\in\Zz[T_1,\ldots,T_n]$ be an homogeneous polynomial in $n$
variables. Take again $\psi(x)=e(x/p)$. For every
$\uple{v}\in\Ff_{p}^{n}$, let
\[
  T(F,\uple{v};p)=\sum_{\substack{\bfx\in\Ff_{p}^{n}\\F(\bfx)=0}}
  \psi(\bfx\cdot\uple{v}).
\]

Note that for every $a\in\Ff_{p}^{\times}$, we have
\[
  T(F,a\uple{v};p)=\sum_{\substack{\bfx\in\Ff_{p}^{n}\\F(\bfx)=0}}
  \psi(\bfx\cdot(a\uple{v}))
  =\sum_{\substack{\bfy\in\Ff_{p}^{n}\\F(\bfy)/a^n=0}}
  \psi(\bfy\cdot\uple{v})=T(F,\uple{v};p),
\]
and hence
\begin{align*}
  (p-1)T(F,\uple{v};p)
  &=\sum_{a\in\Ff_{p}^{\times}}T(F,a\uple{v};p)\\
  &=\sum_{a\in\Ff_{p}}\sum_{\substack{\bfx\in\Ff_{p}^{n}\\F(\bfx)=0}}
  \psi(a(\bfx\cdot\uple{v}))-
  |\{\bfx\in\Ff_{p}^{n}\,\mid\,F(\bfx)=0\}|\\
  &=
    p|\{\bfx\in\Ff_{p}^{n}\,\mid\, F(\bfx)=\bfx\cdot\uple{v}=0\}|-
    |\{\bfx\in\Ff_{p}^{n}\,\mid\,F(\bfx)=0\}|,
\end{align*}
which implies 
\[
  T(F,\uple{v};p)= \frac{p}{p-1}|\{\bfx\in\Ff_{p}^{n}\,\mid\,
  F(\bfx)=\bfx\cdot\uple{v}=0\}|
  -\frac{1}{p-1}|\{\bfx\in\Ff_{p}^{n}\,\mid\,F(\bfx)=0\}|.
\]

If we furthermore assume that the associated projective hypersurface
$V(F)\subset\mathbf{P}^{n-1}$ is non-singular (in particular,~$F$ is
irreducible), then we know that
\[
  \frac{1}{p-1}|\{\bfx\in\Ff_{p}^{n}\,\mid\, F(\bfx)=0\}|
  = p^{n-2}+O(p^{\frac{n-2}{2}})
\]
(see, e.g.,~\cite[App.,\,Th.\,1]{hooley}), while for
$\uple{v}\neq \mathbf{0}$, we have
\[
  \frac{1}{p-1}|\{\bfx\in\Ff_{p}^{n}\,\mid\,
  F(\bfx)=\uple{v}\cdot\bfx=0\}|= p^{n-3}+E(F,\uple{v};p),
\]
where 
\[
  E(F,\uple{v};p)=
  \begin{cases}
    O(p^{\frac{n-3}{2}})
    &\text{if $V(F)\cap V(\uple{X}\cdot\uple{v})$ is smooth,}\\
    O(p^{\frac{n-2}{2}})
    &\text{if $V(F)\cap V(\uple{X}\cdot\uple{v})$
      is singular.}
  \end{cases}
\]
(loc. cit.)

On the other hand, $V(F)\cap V(\uple{X}\cdot\uple{v})$ is singular if
and only if $\uple{v}$ is a point on the dual variety
$V(F)^{\ast}$. Moreover, when $F$ defines a smooth projective variety,
the dual variety $V(F)^{\ast}$ is known to be an irreducible
hypersurface. Thus we conclude that, assuming that $V(F)$ is smooth, we
obtain a Katz--Laumon stratification with
\begin{itemize}
\item[$\bullet$] $X_{1}=V(F)^{\ast}$,
\item[$\bullet$] $X_2 = \cdots = X_{n-1}=\{\boldsymbol{0}\}$.
\end{itemize}

Indeed, the above shows that $T(F,\uple{v};p)=O(p^{\frac{n-1}{2}})$ for
$\uple{v}\notin V(F)^{\ast}$, and $T(F,\uple{v};p)=O(p^{\frac{n}{2}})$
holds when $\uple{v}\in X_{1}\setminus X_{n-1}$, whereas
$T(F,\boldsymbol{0};p)=p^{n-1}+O(p^{n/2})$.

\begin{remark}
  This computation is compatible with the case of quadratic forms, since
  $V(F)^{\ast}=V(F)$ if $F$ is a quadratic form.
\end{remark}


\subsection{A case with a deep stratification}

Let $n\geq 1$ be an integer, and let $F_{0}$, \ldots,
$F_{n-1}\in\Zz[x,y,z,w]$ be non-degenerate quadratic forms in $4$
variables. Let $V$ be the subscheme of $\mathbf{P}^{4n-1}_{\Zz}$ defined
by the equations
\[
  V=\{[x_{0}:\cdots :x_{4n-1}]\,\mid\,
  F_{j}(x_{4j},x_{4j+1},x_{4j+2},x_{4j+3})=0\text{ for } 0\leq j\leq
  n-1\}.
\]

Still with~$\psi(x)=e(x/p)$ for $x\in\Ff_p$, consider
\[
  T(V,\uple{v};p)=\sum_{\substack{\bfx\in
      V(\Ff_{p})}}\psi(\bfx\cdot\uple{v}).
\]

For every $k\in\{1,...,n\}$ we denote
$S_{k}=\{I\subset\{1,...,n\}\,\mid\, |I|=k\}$. Then we define
\[
  X_{k}=\bigcup_{I\in S_{k}}\{[x_{0}:\cdots :x_{4n-1}]\,\mid\,
  F_{j}(x_{4j},x_{4j+1},x_{4j+2},x_{4j+3})=0\text{ for every }j\in I\}
\]
for $1\leq k\leq 3n-2$ and $X_{3n-1}=\{\boldsymbol{0}\}$. From the
computations made in Section $\ref{quadratic}$, we deduce that
generically $T(V,\uple{v};p)=O(p^{(3n-1)/2})$ and that
\[
  T(V,\uple{v};p)=
  \begin{cases}
    O(p^{\frac{3n-1+k}{2}}) &
    \text{if $k\leq n-1$ and $\uple{v}\in X_{k}\setminus X_{k+1}$},\\
    O(p^{2n-1/2}) &
    \text{if $\uple{v}\in X_{n}\setminus \{\boldsymbol{0}\}$},\\
    p^{3n-1} &\text{if $\uple{v}=\boldsymbol{0}$}.
\end{cases}
\]

\subsection{Stratification in families}
  
We are going to use the stratification for diagonal quadratic forms to
illustrate Theorem~\ref{th-algebraic-dependency}. Let $n\geq 2$ be an
odd integer (the case $n$ even is similar), and consider the
``universal'' diagonal quadratic form
\[
F(\uple{A},\uple{T})=\sum_{i=1}^{n}A_{i}T_{i}^{2}\in
\Zz[A_1,\ldots,A_n,T_1,\ldots, T_n].
\]

We view this in the setting of Theorem~\ref{th-algebraic-dependency}
with $r=n$, and
\[
  W=\{(\uple{a},\uple{x})\in \Aa^r\times\Aa^r\,\mid\,
  F(\uple{a},\uple{x})=0\}
\]
with morphism $\Delta(\uple{a},\uple{x})=\uple{a}$. Note that, in this
case, all varietes $V_{\uple{a}}$ are hypersurfaces with relative
dimension $n-1$, except when $\uple{a}=0$, and that $V_{\uple{a}}$
modulo~$p$ is a hypersurface unless $p$ divides all $a_i$.
In what follows, we fix a prime~$p$ and we denote by
$\delta(\uple{a},\bfx)$ the characteristic function of the solution set
of the quadratic form, i.e., we put
\[
  \delta (\uple{a},\uple{x})=\begin{cases}
    1,&\text{if $F(\uple{a},\uple{x})=0$,}\\
    0,&\text{otherwise.}
\end{cases} 
\]
for $\uple{a}$ and $\uple{x}$ in~$\Ff_p^n$. We still denote
$\psi(x)=e(x/p)$, and define the function
\[
  \varphi(\uple{a},\bfb,\uple{x})=\delta(\uple{a},\uple{x})
  \psi(-\uple{a}\cdot\bfb)
\]
on~$\Ff_p^{3n}$.  Further, for $\uple{d}\in\Ff_p^n$, we denote by
$F_{\uple{d}}$ the specialized quadratic form, so that
\[
  F_{\uple{d}}(\uple{x})=F(\uple{d},\bfx)=\sum_{i=1}^{n}d_{i}x_{i}^{2}.
\]

We compute the discrete Fourier transform of~$\varphi$. It is given by
the formula
\begin{align*}
  \widehat{\varphi}(\uple{c},\uple{d},\uple{v})
  &=\sum_{(\uple{a},\bfb,\uple{x})\in\Ff_p^{3n}}\psi(\uple{a}\cdot\uple{c}+
  \bfb\cdot\uple{d}+\uple{v}\cdot\uple{x})\varphi(\uple{a},\bfb,\uple{x})
  \\
  &=\sum_{(\uple{a},\bfb,\uple{x})\in\Ff_p^{3n}}\psi
  (\uple{a}\cdot\uple{c}+\bfb\cdot\uple{d}+
  \uple{v}\cdot\uple{x}-\uple{a}\cdot\bfb)\delta(\uple{a},\uple{x})
  \\
  &=\sum_{\substack{(\uple{a},\uple{x})\in\Ff_p^{2n}\\F(\uple{a},\uple{x})=0}}\psi
  (\uple{a}\cdot\uple{c}+\uple{v}\cdot\uple{x})
  \sum_{\bfb\in\Ff_p^n}\psi(\bfb\cdot(\uple{d}-\uple{a}))\\
  &=p^{n}\psi(\uple{d}\cdot\uple{c})
  \sum_{\substack{\uple{x}\in\Ff_p^{n}\\F(\uple{d},\uple{x})=0}}\psi
  (\uple{v}\cdot\uple{x})=p^{n}
  \psi(\uple{d}\cdot\uple{c})T(F_{\uple{d}},\uple{v};p)
\end{align*}
for $(\uple{c},\uple{d},\uple{v})\in\Ff_p^{3n}$.

Since the equality
\[
  |\widehat{\varphi} (\uple{c},\uple{d},\uple{v})|=
  |\widehat{\varphi}(0,\uple{d},\uple{v})|=
  p^{n}|T(F_{\uple{d}},\uple{v};p)|,
\]
holds for all $(\uple{c},\uple{d},\uple{v})$, we can use the
stratification computed in Section~\ref{quadratic} to define a
stratification $\mathscr{X}$
\begin{equation}
  \mathbf{A}^{3n}_{\Zz}\supset X_{1}\supset\cdots\supset X_{3n-1},
\label{eq : stratall}
\end{equation}
by defining
\[
  X_{j}=\{(\uple{c},\uple{d},\uple{v})\in \Aa^{3n}\,\mid\, \uple{v}\in
  X_{j,\uple{d}}\},
\]
where $X_{j,\uple{d}}$ is the $j$-th stratum of the stratification
associated to the exponential sums $T(F_{\uple{d}},\uple{v};p)$ computed
in Section~\ref{quadratic}.  On the other hand, we can use the
computation in Section~\ref{quadratic} to describe $X_{j,\uple{d}}$,
which leads to the following descriptions:
\begin{itemize}
\item[$i)$] if $j$ is even, then
  \[
    X_{j}=\bigcup_{\substack{I\subset\{1,...,n\}\\
        |I|=n-j-1}}\{(\uple{d},\uple{v})\,\mid\, d_{i}=0\text{ if
    }i\not\in I, F(\uple{d},\uple{v})=0\}
    \cup\bigcup_{\substack{J\subset\{1,...,n\}\\
        |J|=n-j}}\{(\uple{d},\uple{v})\,\mid\, d_{i}=0\text{ if
    }i\not\in J\},
  \]
  or in other words, $X_{j}$ is the set of solutions of the equations
  \[
    \bigcup_{\substack{I\subset\{1,...,n\}\\ |I|=n-j-1}}\{ A_{i}=0\text{
      if }i\not\in I,
    F(\uple{A},\uple{v})=0\}\bigcup_{\substack{J\subset\{1,...,n\}\\
        |J|=n-j}}\{(\uple{d},\uple{v}): A_{i}=0\text{ if }i\not\in J\}.
  \]
\item[$ii)$] if $j$ is odd, then
  \[
    X_{j}=\bigcup_{\substack{I\subset\{1,...,n\}\\
        |I|=n-j-2}}\{(\uple{d},\uple{v})\,\mid\, d_{i}=0\text{ if
    }i\not\in I, F(\uple{d},\uple{v})=0\},
  \]
  i.e. $X_{j}$ is defined by the equations
  \[
    \bigcup_{\substack{I\subset\{1,...,n\}\\ |I|=n-j-2}}\{ A_{i}=0\text{
      if }i\not\in I, F(\uple{A},\uple{v})=0\}.
  \]
\end{itemize}

Notice that by specializing the variables $\uple{A}=\uple{d}$ in the
definition of the $X_{j}$, one obtains the defining equation for the
strata $X_{j,\uple{d}}$ associated to $T(F_{\uple{d}},\uple{v};p)$. In
other words, we can obtain a stratification $\mathcal{X}_{\uple{d}}$ for
$T(F_{\uple{d}},\uple{v};p)$ by specializing the stratification
$\mathcal{X}$ for $\widehat{\varphi}$.  But whereas we constructed in
this example the stratification $\mathcal{X}$ for $\widehat{\varphi}$ by
putting together the stratification~$\mathcal{X}_{\uple{d}}$ for
each~$\uple{d}$, the idea behind Theorem~\ref{th-algebraic-dependency}
is to follow the opposite strategy: one first constructs a
stratification $\mathcal{X}$ for $\widehat{\varphi}$, following the work
of Katz-Laumon and Fouvry-Katz and then one shows that, for a dense open
subset $U\subset\mathbf{A}^{n}$, the stratification
$\mathcal{X}_{\uple{d}}$ can be obtained by ``specializing'' each stratum
of $\mathcal{X}$ by $\uple{A}=\uple{d}$.

\section{Algebraic uniformity: general strategy}\label{sec-algebraic}

This section is devoted to the description of the general strategy
behind the various uniformity theorems. We note that the use of such
tools as the derived category of constructible $\ell$-adic sheaves,
and its formalism, are genuinely unavoidable here, due to the
essential need to work with higher-dimensional trace functions. We
refer analytic number theorists who are yet unfamiliar with this
theory to the accessible intuitive discussion by Forey, Fresán and
Kowalski found in the Appendix, which is reproduced (with the kind
permission of the authors) from~\cite[App.\,E]{ffk} with minor
changes.

We also note that, as is often the case in applications of $\ell$-adic
machinery to exponential sums, there is an interplay between concrete
computations with trace functions and exponential sums, and their
interpretations in terms of sheaf theory. We will usually present the
argument using the former, before establishing the algebraic
incarnation.

The basic strategy for the stratification theorems is encapsulated in
the following ``local'' lemma, which builds on the basic properties of
trace functions. We recall that the objects involved are presented
intuitively in the Appendix.


\begin{lemma}\label{lm-fk}
  Let~$X/k$ be an algebraic variety over a finite field~$k$. Let~$L$ be
  an object of $D^b_c(X,\bQl)$ for some prime $\ell$ invertible
  in~$k$. Let $\mcX=(X_i)$ be a finite stratification of~$X$ with
  smooth, equidimensional strata, such that all the cohomology
  sheaves~$\mcH^j(L)$ of~$L$ are lisse on each~$X_i$. Suppose further
  that $L$ is semiperverse and mixed of weights $\leq w$, for some
  integer~$w$.

  Let 
  \[
    C=\sum_{j}\rank (\mcH^j(L)|X_i).
  \]
  
  For every $i$ and every $x\in X_i(k)$, we have
  \[
    |t_L(x)|\leq C|k|^{(w-\dim(X_i))/2},
  \]
  and in particular, if~$Y_j$ denotes the closure of the union of the
  strata with dimension~$\leq \dim(X)-j$, then we have
  \[
    |t_L(x)|\leq C|k|^{(w-\dim(X)+j-1)/2},
  \]
  for any~$x\in (X\setminus Y_j)(k)$.
\end{lemma}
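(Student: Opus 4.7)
The plan is to write the trace function of $L$ as an alternating sum over its cohomology sheaves, use semiperversity to restrict which degrees can actually contribute at a point $x$ of a given stratum, and then use the weight condition together with the lisse-on-strat hypothesis to bound each term.

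First I would recall that, by definition of the trace function of a complex,
\[
  t_L(x) = \sum_j (-1)^j \Tr(\Frob_x \mid \mcH^j(L)_{\bar x}),
\]
so the triangle inequality gives
\[
  |t_L(x)| \leq \sum_j \rank(\mcH^j(L)\mid X_i)\cdot\max|\lambda|,
\]
where $\lambda$ runs over the Frobenius eigenvalues on $\mcH^j(L)_{\bar x}$ and we have used that $\mcH^j(L)$ is lisse on~$X_i$ (so its stalk has dimension equal to the generic rank on~$X_i$). The $j$'s which contribute at $x \in X_i(k)$ are only those for which $X_i$ meets the support of $\mcH^j(L)$.

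Next I would invoke the two hypotheses. Semiperversity of $L$ means that $\dim \supp(\mcH^j(L)) \leq -j$ for every $j$; hence if $\mcH^j(L)\mid X_i \ne 0$, then $\dim(X_i) \leq -j$, i.e.\ $j \leq -\dim(X_i)$. On the other hand, that $L$ is mixed of weights $\leq w$ means that $\mcH^j(L)$ is punctually mixed of weights $\leq w+j$, so the Frobenius eigenvalues satisfy $|\lambda| \leq |k|^{(w+j)/2}$. Combining with the previous inequality $j \leq -\dim(X_i)$, we get $|\lambda| \leq |k|^{(w-\dim(X_i))/2}$ for every contributing eigenvalue, which gives
\[
  |t_L(x)| \leq \Bigl(\sum_j \rank(\mcH^j(L)\mid X_i)\Bigr)\cdot |k|^{(w-\dim(X_i))/2} = C\,|k|^{(w-\dim(X_i))/2}.
\]

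The second assertion is then purely formal: by the definition of $Y_j$, a point $x \in (X\setminus Y_j)(k)$ lies in some stratum $X_i$ with $\dim(X_i) \geq \dim(X) - j + 1$, so $-\dim(X_i) \leq -\dim(X) + j - 1$, and the first bound gives
\[
  |t_L(x)| \leq C\,|k|^{(w-\dim(X)+j-1)/2}.
\]

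There is no real obstacle here: once the conventions on semiperversity (support dimensions) and on weight shifts under the $\mcH^j$ functors are recalled correctly, the argument is a one-line combination of these with the trace formula for a complex. The only thing to be careful about is to identify the correct worst-case index $j$ in the alternating sum, namely $j = -\dim(X_i)$, which is precisely where semiperversity is saturated and where the weight bound is weakest.
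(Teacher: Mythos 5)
Your proof is correct and follows essentially the same argument as the paper: decompose $t_L$ via the cohomology sheaves, use semiperversity together with the lisse-on-strat hypothesis to show that only degrees $j\le -\dim(X_i)$ contribute at a point of $X_i$, bound those terms by the weight condition, and conclude the second assertion formally from the definition of $Y_j$. The one place to be slightly more explicit, as the paper is, is that lisse-ness on $X_i$ forces a nonzero $\mcH^j(L)|X_i$ to have full support on a component of $X_i$, which is exactly what makes $\dim\supp(\mcH^j(L))\ge\dim(X_i)$ and hence $j\le-\dim(X_i)$.
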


\begin{proof}
  We have, more or less by definition, the formula
  \[
    t_L(x)=\sum_{j}(-1)^k\Tr(\frob_x\mid \mcH^j(L)_x).
  \]

  Since~$L$ is mixed of weights~$\leq w$, each $\mcH^j(L)$ is mixed of
  weights~$\leq j+w$ (again by definition of weights in this context).

  Consider a stratum~$X_i$ and an integer~$j$. The key remark is that
  if the restriction of~$\mcH^j(L)$ to $X_i$ is non-zero, then because
  this cohomology sheaf is lisse on~$X_i$, its support contains a
  connected component of~$X_i$, and hence has dimension
  $\geq \dim(X_i)$. On the other hand, by definition of a semiperverse
  object, the support of~$\mcH^j(L)$ has dimension $\leq -j$. What
  this means is that if $x\in X_i(k)$, then we have $\mcH^j(L)_x=0$
  unless $j\leq -\dim(X_i)$. Therefore, we get the bound
  \[
    |t_L(x)|=\Bigl|\sum_{j\leq -\dim(X_i)}(-1)^k\Tr(\frob_x\mid
    \mcH^j(L)_x)\Bigr| \leq C|k|^{(w-\dim(X_i))/2},
  \]
  as claimed.

  For the last statement, if~$x\in (X\setminus Y_j)(k)$, then
  picking~$i$ so that $x\in X_i(k)$, we must
  have~$\dim(X_i)\geq \dim(X)-j+1$ by definition of~$Y_j$, so
  \[
    |t_L(x)|\leq C|k|^{(w-\dim(X)+j-1)/2}.
  \]
\end{proof}

\begin{remark}
  We can check this for consistency: in a ``generic'' stratum with
  $\dim(X_i)=\dim(X)$, we obtain a bound of size
  $|k|^{(w-\dim(X))/2}$. (Equivalently, note that $Y_1$ has
  dimension~$\leq \dim(X)-1$, hence is a proper closed subvariety
  of~$X$, and thus for $x$ in the dense open set $X\setminus Y_1$, we
  recover the bound of size $|k|^{(w-\dim(X))/2}$ again.)

  If $w=0$, then this becomes $|k|^{-\dim(X)/2}$, and this corresponds
  to ``square-root cancellation'' when the trace function of~$L$ is
  associated to an exponential sum over~$X$, taking into account the
  normalization involved in the definition of weights in this context
  (see Remark~\ref{rm-normalize}, (1)).
\end{remark}

It is a basic fact about objects of~$D^b_c(X,\bQl)$ that, over a fixed
finite field, one can \emph{always} find a stratification with the
properties in this lemma. However, in applications, one starts with
``integral'' objects (over~$\Zz$), and a crucial remark going back to
Fouvry~\cite[p.\,85]{fouvry} is that it is essential that this
stratification be also ``defined over~$\Zz$'', i.e., that the strata
$H_i$ for the situation modulo a prime~$p$ be defined by reduction
modulo~$p$ of fixed varieties defined by the vanishing of integral
polynomials.
In other words, the original applications of stratification (as
in~\cite{fouvry-katz} and going back to the fundamental work of Katz and
Laumon ~\cite{katz-laumon}) already exhibit a type of \emph{algebraic
  uniformity} as the prime~$p$ varies; this is a different type of
algebraic uniformity than the versions of
Theorems~\ref{th-algebraic-dependency} and~Theorem \ref{th-general-kl},
where the underlying variety varies.

The strategy we will use to prove our main results is the following,
which elaborates on the argument of Katz--Laumon and Fouvry--Katz, and
provides an approach to uniform versions of the lemma above.

\begin{enumerate}
\item[] \textbf{Step~1} (\emph{Representation}) We represent the
  family of exponential sums of interest as the trace function of some
  object~$R$ on the parameter space.
\item[] \textbf{Step~2} (\emph{Stratification}) We construct a finite
  stratification of the parameter space with smooth equidimensional
  strata such that the cohomology sheaves of~$R$ are lisse on each
  stratum; this stratification is constructed ``over~$\Zz$'', so that it
  restricts to suitable stratifications modulo all (but finitely many)
  primes.
\item[] \textbf{Step~3} (\emph{Semiperversity}) We show that~$R$ is fiberwise
  semiperverse and mixed of some bounded weights.
\item[] \textbf{Step~4} (\emph{Betti}) Finally, we find an estimate
  for the constant~$C$ that is uniform with respect to the desired
  parameters.
\end{enumerate}

In general, the Representation Step is a matter of applying the
formalism of étale cohomology and the Grothendieck--Lefschetz trace
formula. The most general and precise version of the Stratification Step
is found in~\cite[Th.\,2.1]{fouvry-katz}; it relies on the previous
foundational work of Katz and Laumon~\cite{katz-laumon}. On the other
hand, in the cases under consideration at least, the Semiperversity Step
is based on the properties of Deligne's geometric Fourier transform over
finite fields (this step concerns the situation over finite fields, in
contrast with the Stratification Step, which is algebraic geometry
over~$\Zz$; the interplay of these two features is quite
interesting). Finally, the Betti Step follows in fact automatically from
the Stratification Step, if the version of Fouvry--Katz is used, but it
would usually also be possible to derive it (sometimes in slightly less
precise form) from Sawin's \textit{Quantitative Sheaf
  Theory}~\cite{qst}.\footnote{\ Note that this tool was not available
  at the time of~\cite{fouvry-katz}.}

In the case of Theorem~\ref{th-algebraic-dependency}, the principles are
the same, and in particular we perform a Fourier transform not only
over~$h$ but also over the parameter variable~$a\in\Aa^r$. This however
only gives a stratification~$(Y_j)$ of the whole space~$W$, and not of
the single fibers~$V_a$. Defining $X_{j,a}$ to be the fiber over~$a$
gives what we want, but only in general for~$a$ outside a codimension
one subscheme, as in the statement of the theorem.

There is an additional difficulty in Theorem~\ref{th-general-kl}, in
applying the Semiperversity Step of this strategy: because the parameter
space in that case is not in general an affine space, we cannot perform
a Fourier transform over~$a$ to imitate the previous case. We will work
around this with an additional trick involving an ``extra'' parameter
variable~$b$, allowed to range over all of~$\Aa^r$ and thus suitable for
Fourier analysis.
\par
We will give the detailed proofs of our results in the next
sections. 

\section{Algebraically uniform stratification, I}

In this section, we start by proving the simpler
Theorem~\ref{th-algebraic-dependency}, which avoids some of the
complications of the most general statement. However, since it doesn't
really involve any more work, we proceed in the setting of
Theorem~\ref{th-general-kl-trace}, i.e., with a general adapted trace
function instead of simply a function of the form $\chi(g(x))$. In other
words, we prove Theorem~\ref{th-general-kl-trace} with parameter space
$\mathbf{M}=\Aa^r_{\Zz}$.

\subsection{Data}\label{ssec-data}


We thus assume we are given the data $(W,\Delta)$, as well as a
stratification $\mcW=(W_i)$ of~$W$.

We then define or assume given the following \emph{global} data and
notation:
\begin{itemize}
\item $T=\Aa^n_{\Zz}\times\Aa^r_{\Zz}$, with coordinates
  $(h,a)=(h_1,\ldots,h_n,a)$, where $a\in \Aa^r$,
\item $X=\Aa^n_{\Zz}\times W$, with coordinates
  $(h,x)=(h_1,\ldots, h_n,x_1,\ldots,x_n)$, and with ``input''
  stratification $\mathcal{X}$ defined by $(\Aa^n_{\Zz}\times W_i)_i$,
\item $\pi\,:\, X\lra T$ is the map given by $\pi(h,x)=(h,\Delta(x))$,
\item $F\,:\, X\lra \Aa^1_{\Zz}$ is the function given by
  \[
    F(h,x)=f(x)+x\cdot h=f(x)+\sum_{i=1}^n x_ih_i
  \]
  viewed as a $T$-morphism $X\lra \Aa^1_T$,
\item $\ell$ is some fixed prime number,
\item $K$ is an object of the derived category $D^b_c(W[1/\ell],\bQl)$,
  adapted to the stratification~$\mcW$ 
  of~$W$.
\end{itemize}

We also fix an isomorphism between~$\bQl$ and~$\Cc$, and below we
identify these two fields, so that in particular exponential sums (which
are elements of~$\Cc$) can be identified with trace functions of
$\ell$-adic objects (which are in~$\bQl$).

In addition, we will consider the following \emph{local} data:
\begin{itemize}
\item a finite field $k$ of characteristic not dividing $\ell$, with an
  algebraic closure~$\bar{k}$, such that $W\otimes \Fp$ has dimension
  $d$ and moreover $K\otimes \Fp$ is semiperverse of weights $\leq 0$,
\item a non-trivial additive character $\psi\colon k\to\bQl^{\times}$,
\item a direct factor $L$ of $K\otimes k$ (which is then also
  semiperverse of weights~$\leq 0$).
\end{itemize}

We will then study the exponential sums
\[
  \sum_{x\in V_a(k)}\psi(f(x)+h\cdot x)t_L(x) 
\]
where $V_a=\Delta^{-1}(a)$, viewed as functions of
$(h,a)\in k^n\times W(k)$.

\begin{remark}\label{rm-chig}
  Readers who are mostly interested in the case of
  Theorem~\ref{th-algebraic-dependency} may assume that~$K$ and~$L$ are
  chosen so that $t_L(x)=\chi(g(x))$ for a given invertible function~$g$
  and some non-trivial multiplicative character~$\chi$, which is adapted
  to the stratification~$\mcW=\{W\}$. More precisely, to satisfy the
  last conditions on~$K$ (the semiperversity and the fact that the
  weights are $\leq 0$), we should take
  \[
    t_L(x)=(-1)^{d}|k|^{-d/2}\chi(g(x))
  \]
  corresponding to the object $\mcL_{\chi(g)}[d](d/2)$, where the shift
  by $d$ ensures the semiperversity condition (using the assumption that
  $W\otimes\Fp$ has dimension~$d$) and the twist by $d/2$ is then used
  to have weights $\leq 0$.
\end{remark}

\subsection{Representation}\label{sec-r}

We assume given the global data above, and consider the exponential
sums for a given choice of local data $(k,\psi,L)$.

We claim that for $(h,a)\in T(k)$, the sum
\[
  \sum_{x\in V_a(k)}\psi(f(x)+h\cdot x)t_L(x) = \sum_{\substack{x\in
      W(k)\\ \Delta(x)=a}}\psi(f(x)+h\cdot x)t_L(x)
\]
is the value at $(h,a)$ of the trace function of the object
\begin{equation}\label{eq-r}
  R\pi_{k,!}(p_{2,k}^*L\otimes \sheaf{L}_{\psi(F)})
\end{equation}
where $p_2\,:\, X\lra W$ is the projection. This is an object of the
category $D^b_c(T_k,\bQl)$, which we denote\footnote{\ There should be
  no confusion with the $R$ notation for derived functors.} by~$R$
(here and below, in notation such as $\pi_k$, $p_{2,k}$ or~$T_k$, we
use the subscript $k$ to indicate the base change to~$k$).
\par
This is a straightforward check: by the formalism of trace functions
and of $\ell$-adic cohomology (especially the meaning of $R\pi_!$ as
``summing over points of the fiber''), the trace function~$t_R$ of~$R$
satisfies
\begin{align*}
  t_R(h,a) &=\sum_{(h',x)\in \pi^{-1}(h,a)(k)}\psi(F(x,h))
             t_L(x)\\
           &=\sum_{\substack{x\in
             W(k)\\\Delta(x)=a}}\psi(f(x)+h\cdot x) t_L(x)
  =\sum_{x\in V_a(k)}\psi(f(x)+h\cdot x)t_L(x)
\end{align*}
for all $(h,a)\in T(k)$.

\subsection{Stratification}\label{ssec-stratif}

We apply the stratification theorem of Fouvry and
Katz~\cite[Th.\,2.1]{fouvry-katz} to the global data above (i.e., that
of Section~\ref{ssec-data}).  The output is a triple
$(N,C,\mathcal{Z})$ where:
\begin{enumerate}
\item $N\geq 1$ is an integer,
\item $C\geq 0$ is a real number,
\item $\mathcal{Z}=(Z_i)_{i\in I}$ is a finite stratification of
  $T=(\Aa^n\times\Aa^r)_{\Zz[1/N]}$.
\end{enumerate}

All of these depend (only) on $(W,\Delta,\mcW,f,\ell)$, and satisfy
various properties.  The most important ones are:
\begin{enumerate}
\item each stratum~$Z_i$ is smooth and surjective over~$\Zz[1/N]$;
\item the geometric fibers of each stratum of $Z_i$ over $\Zz[1/N]$ are
  equidimensional (see~\cite[Th.\,2.1,\,1)]{fouvry-katz});
\item for any finite field $k$ of characteristic not dividing
  $\ell N$, and any non-trivial additive character~$\psi$ as in the
  Representation Step, the restriction to $Z_i$ (over~$k$) of the
  cohomology sheaves of the corresponding object~$R$ in~(\ref{eq-r})
  are \emph{adapted} to~$\mcZ$, i.e., their restriction to
  each~$(Z_i)_k$ are lisse (see~\cite[Th.\,2.1,\,2)]{fouvry-katz}).
\end{enumerate}

In particular, we see that this stratification fits into the pattern
of Lemma~\ref{lm-fk}, when we restrict to finite fields of
characteristic not dividing~$\ell N$.

\begin{remark}
  (1) We note that although $C$ is mentioned in the statement
  of~\cite[Th.\,2.1]{fouvry-katz}, it does not appear in the published
  statements of the properties that $(N,C,\mathcal{Z})$ are stated to
  satisfy. This is a typographical mistake, and the right-hand side of
  the main inequality in property (2) of loc. cit. should be
  $C\sup_{x\in X_t}\|L\|(x)$ instead of $\sup_{x\in X_t}\|L\|(x)$.

  (2) Note that the stratification is a \emph{global} object, but the
  objects~$R$ considered in the first step are \emph{local},
  constructed over a given finite field~$k$, because additive
  characters (and their associated Artin--Schreier sheaves) only make
  sense as algebraic objects over a single finite field (and its
  extensions).
\end{remark}
\par

\subsection{Semiperversity and weights}\label{ssec-perv-weights}

As in the general strategy, we claim that for suitable finite field~$k$
and any non-trivial additive character~$\psi$, the object~$R[n]$ is
semiperverse. Although the category of semiperverse sheaves enjoys many
good stability and formal properties, this property is not directly
obvious from the definition of~$R$ (due to the application of $R\pi_!$,
which does not in general preserve this property). To prove the claim,
we apply Fourier analysis. The crucial facts are: (1) that the Fourier
transform of a trace function is again a trace function (due to
Deligne): (2) that the Fourier transform of an object~$M$ is
semiperverse if and only if the object~$M$ is semiperverse, if the
Fourier transform is properly normalized.\footnote{\ Analytically, this
  normalization is similar to the unitary normalization of the discrete
  Fourier transform.}


We will thus compute the Fourier transform. Although one needs to do
this \emph{algebraically} (at the level of sheaves or objects of the
derived category), it is intuitively much clearer to start by doing the
computation with trace functions, and then to upgrade it to a
sheaf-theoretic statement.

Let~$(\eta,\alpha) \in T(k)$ (viewed as the ``dual'' space for the
Fourier transform). Then we compute the value at~$(\eta,\alpha)$ of
the Fourier transform of~$t_R$, namely
\begin{align}
  \widehat{t}_R(\eta,\alpha)
  &=
    \sum_{(h,a)\in T(k)}t_R(h,a)\psi(-a\cdot \alpha-h\cdot \eta)
    \notag
  \\
  &=
    \sum_{(h,a)\in T(k)}\psi(-a\cdot \alpha-h\cdot \eta)
    \sum_{\substack{x\in W(k)\\\Delta(x)=a}} \psi(f(x)+h\cdot x)t_L(x)
  \label{eq-ft3}
  \\
  &= \sum_{x\in W(k)}t_L(x) \psi(f(x)) \psi(-\Delta(x)\cdot
    \alpha)\sum_{h\in k^n}\psi(h\cdot (x-\eta))\label{eq-ft4}
  \\
  &= |k|^nt_L(\eta)\psi(f(\eta)) \psi(-\Delta(\eta)\cdot \alpha)\label{eq-ft5}
\end{align}
by, in turn, the definition, exchanging the order of summation and
finally orthogonality of characters of~$k^n$.

Up to the normalization (which is important), this is the trace
function of the object
\[
  q_1^*(i_{W,!}(L)\otimes \sheaf{L}_{\psi(f)})\otimes
  \sheaf{L}_{\psi(G)},
\]
of $D^b_c(T_k,\bQl)$, where
\begin{enumerate}
\item $q_1\,:\, T_k=\Aa^{n}_k\times\Aa^r_k\lra \Aa^n_k$ is the first
  projection $(\eta,\alpha)\mapsto \eta$;
\item the map $i_W\colon W_k\to \Aa^n_k$ is the immersion of~$W$
  in~$\Aa^n_k$;
\item the function $G$ is defined by
  $G(\eta,\alpha)=-\Delta(\eta)\cdot \alpha$.
\end{enumerate}

The precise sheaf-theoretic version of this computation is the
following lemma, which we will prove in Section~\ref{sec-iso}.

\begin{lemma}\label{lm-sheaf}
  With notation as above, the algebraic Fourier transform~$\ft_{\psi}R$
  of~$R$ is isomorphic to
  \[
    (q_1^*(i_{W,!}(L)\otimes \sheaf{L}_{\psi(f)})\otimes
    \sheaf{L}_{\psi(G)})[r-n]
  \]
  in $D^b_c(T_{\bar{k}},\bQl)$.
\end{lemma}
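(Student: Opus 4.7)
The plan is to promote the trace-function calculation \textup{(\ref{eq-ft3})--(\ref{eq-ft5})} to an isomorphism in the derived category by applying proper base change and the projection formula, with the key geometric input being the Fourier transform of the constant sheaf on~$\Aa^n_h$ (equivalently, the acyclicity of $\sheaf{L}_{\psi(h\cdot y)}$ on $\Aa^n_h$ for $y\neq 0$). First I would unfold $\ft_\psi R$ as $R\mathrm{pr}_{2,!}(\mathrm{pr}_1^*R\otimes\sheaf{L}_{\psi(-h\cdot\eta-a\cdot\alpha)})[n+r]$, where $\mathrm{pr}_1,\mathrm{pr}_2$ are the two projections from $T\times\hat T$ with $\hat T=\Aa^n_\eta\times\Aa^r_\alpha$. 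Proper base change along $\pi\times\mathrm{id}_{\hat T}\colon X\times\hat T\to T\times\hat T$ lets me commute $\mathrm{pr}_1^*$ past the $R\pi_!$ in the definition~(\ref{eq-r}), and the projection formula then fuses both Artin--Schreier sheaves into $\sheaf{L}_{\psi(f(x)+h\cdot(x-\eta)-\Delta(x)\cdot\alpha)}$ on $X\times\hat T=\Aa^n_h\times W\times\Aa^n_\eta\times\Aa^r_\alpha$, with the pullback of~$L$ being its pullback via the projection to~$W$.

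Next I would factor the resulting pushforward $\sigma\colon X\times\hat T\to\hat T$ as $\sigma=\tau\circ\rho$, where $\rho\colon X\times\hat T\to W\times\hat T$ projects off~$h$ and $\tau\colon W\times\hat T\to\hat T$ projects off~$x$. Only the factor $\sheaf{L}_{\psi(h\cdot(x-\eta))}$ involves~$h$, so the projection formula for~$\rho$ reduces the analysis to computing $R\rho_!\sheaf{L}_{\psi(h\cdot(x-\eta))}$. Proper base change along the morphism $\theta\colon W\times\hat T\to\Aa^n_y$, $(x,\eta,\alpha)\mapsto x-\eta$, identifies this with $\theta^*$ of the pushforward of $\sheaf{L}_{\psi(h\cdot y)}$ from $\Aa^n_h\times\Aa^n_y$ down to~$\Aa^n_y$. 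The latter is the standard Fourier transform of $\bQl_{\Aa^n_h}$ and equals $(i_0)_!\bQl[-2n](-n)$, where $i_0\colon\{0\}\hookrightarrow\Aa^n_y$ is the inclusion of the origin. Hence $R\rho_!\sheaf{L}_{\psi(h\cdot(x-\eta))}$ is concentrated, up to shift and twist, on the graph of $i_W\colon W\hookrightarrow\Aa^n_\eta$ inside $W\times\hat T$.

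Applying $R\tau_!$ to the sheaf supported on this graph pushes it down to $(i_W\times\mathrm{id})_!(L\otimes\sheaf{L}_{\psi(f(x)-\Delta(x)\cdot\alpha)})$ on~$\hat T$. A final use of the projection formula for $i_W\times\mathrm{id}$, combined with the fact that~$\Delta$ is defined on all of~$\Aa^n$ so that $G(\eta,\alpha)=-\Delta(\eta)\cdot\alpha$ extends to a function on the whole of~$\hat T$, repackages this as $q_1^*(i_{W,!}(L\otimes\sheaf{L}_{\psi(f)}))\otimes\sheaf{L}_{\psi(G)}$, matching the claimed object after accounting for the overall shift $[r-n]$ arising from combining the Fourier transform shift~$[n+r]$ with the~$[-2n]$ coming from the cohomology of~$\Aa^n_h$. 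The main hurdle is the careful bookkeeping of shifts and Tate twists at every stage, which must be arranged to match the normalization convention chosen for $\ft_\psi$; these conventions do not affect semiperversity or weights, and so do not interfere with the Semiperversity Step that follows.
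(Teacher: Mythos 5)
Your proposal is correct and takes essentially the same approach as the paper: unfold the Fourier transform, commute the pushforward and pullback using proper base change, fuse the Artin--Schreier sheaves via the projection formula, and reduce to the key fact that the Fourier transform of the constant sheaf on $\Aa^n$ is a shifted (and twisted) skyscraper at the origin. The only difference is presentational --- the paper uses a compact $\sigma$-notation designed to parallel the trace-function computation, while you write out the same steps in more traditional derived-category language.
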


Since semiperversity is a geometric property, it can be tested after
base change to~$\bar{k}$.  The object
$i_{W,!}(L)\otimes \sheaf{L}_{\psi(f)}$ is semiperverse (because~$L$ is
semiperverse and a closed immersion is $t$-exact, see
e.g.~\cite[Cor.\,4.1.3]{BBD-pervers}) and $\sheaf{L}_{\psi(f)}$ is
lisse, therefore $q_1^*(i_{W,!}(L)\otimes \sheaf{L}_{\psi(f)})[r]$ is
semiperverse since the morphism $q_1$ is smooth of relative
dimension~$r$ (the functor $q_1^*$ is $t$-exact, see,
e.g.,~\cite[p.\,108]{BBD-pervers}). Thus, it follows from the lemma that
$\ft_{\psi}R[n]$ is semiperverse, hence~$R[n]$ is semiperverse, as we
claimed.

We then need to control the weights of~$R[n]$. Since~$L$ is mixed of
weights~$\leq 0$ and~$\mcL_{\psi(F)}$ is pure of weight~$0$, the tensor
product $p_{2,k}^*L\otimes \mcL_{\psi(F)}$ is mixed of weights~$\leq 0$,
and Deligne's most general version of the Riemann
Hypothesis~\cite{deligne} implies that
$R=R\pi_{k,!}(p_{2,k}^*L\otimes\mcL_{\psi(F)})$ is also mixed of
weights~$\leq 0$. Consequencely, $R[n]$ is mixed of weights~$\leq n$.

\subsection{Conclusion}

We have now obtained the expected properties for the object~$R$, or its
twist~$R[n]$ of weights~$\leq n$. Applying Lemma~\ref{lm-fk}, this has
immediate consequences for the stratification of the exponential sums
$t_R(h,a)$ in terms of the stratification $\mathcal{Z}$ of~$T$ obtained
in the Stratification step: if $Z_i$ is a stratum of~$\mathcal{Z}$ with
geometric fibers of dimension~$\eta_i$, then we have
\begin{equation}\label{eq-final-bound}
  \Bigl| \sum_{x\in V_a(k)}t_L(x)\psi(f(x)+h\cdot x)\Bigr|=
  |t_{R[n]}(h,a)|\leq 
  C|k|^{(n-\eta_i)/2}
\end{equation}
for $(h,a)\in Z_i(k)$, where the constant~$C$ only depends on the
complexity of~$R$, and hence on the complexity of~$L$ and the degree
of~$f$, according to Quantitative Sheaf Theory\cite[\S\,6]{qst}.

\begin{remark}
  We check this for consistency in the case of sums with
  multiplicative characters. According to Remark~\ref{rm-chig}, this
  means that
  \[
    t_L(x)=(-1)^{\dim_{\Zz}(W)}|k|^{-\dim_{\Zz}(W)/2}\chi(g(x)),
  \]
  and hence we obtain
  \[
    \sum_{x\in V_a(k)}\chi(g(x))\psi(f(x)+h\cdot
    x)=O(|k|^{(n+\dim_{\Zz}(W)-\eta_i)/2}).
  \]
  
  Since~$\dim_{\Zz}(W)=d+r$, in a generic stratum with fibers of
  dimension $\eta_i=r+n$, this gives
  \[
    \sum_{x\in V_a(k)}\chi(g(x))\psi(f(x)+h\cdot
    x)=O(|k|^{(n+(d+r)-(r+n))/2})=O(|k|^{d/2}),
  \]
  indicating the expected square-root cancellation whenever $V_{a,k}$
  has dimension~$d$, which is a generic condition.
\end{remark}

Although we have obtained a good stratification result, we are not quite
done in the proof of Theorem~\ref{th-algebraic-dependency}. The issue is
that we want to obtain stratifications for the individual
varieties~$V_a$.

As in~Lemma~\ref{lm-fk} and \cite[p.\,126]{fouvry-katz}, we wish to
define the subvarieties $(X_{j,a})$ in the stratification as the closure
of the fiber $Z_{i,a}=\{h\,\mid\, (h,a)\in Z_i\}$ of those strata which
have relative dimension $\leq r+n-j$ over~$\Zz[1/N]$. However, this will
only give the correct estimates when those fibers themselves have
relative dimension $\leq n-j$, which may not be the case, depending on
the value of~$a$.
However, this property holds generically, and this provides the
statement we want. We now explain this.

We first construct the data $((Y_j),N,C,A,\varphi)$ described in
Theorem~\ref{th-general-kl-trace}. The integer~$N$ and the real
number~$C$ have already been described.

Next, recall that $\eta_i$ denotes the common dimension of all
geometric fibers of $Z_i/T[1/N]$.  We denote
\[
  Y_j=\overline{\bigcup_{\eta_i\leq n+r-j}\overline{Z}_i}
\]
as in~\cite[p.\,126]{fouvry-katz}, the schematic closure\footnote{\
  See, e.g.,~\cite[\S\,10.8]{g-w} for a definition of the schematic
  closure.}  in~$T$ of the Zariski closure in $T[1/N]$ of the union of
the strata with $\eta_i\leq n+r-j$.
By construction, $Y_j$ has relative dimension
$\leq n+r-j$, and
\[
  \Aa^r_{\Zz}\times\Aa^{n}_{\Zz}\supset Y_1\supset \cdots \supset
  Y_{n+r}.
\]
\par
Next, for each $i$, we consider the projection
\[
  \pi_{i}\,:\, Z_i\lra \Aa^r_{\Zz[1/N]}
\]
on the coordinate $a$, so that $Z_{i,a}=\pi_i^{-1}(a)$. Let
$J\subset I$ be the subset of those $j\in I$ where $\pi_j$ is
\emph{not} dominant, i.e., such that the image of $\pi_j$ is not
Zariski-dense in $\Aa^r_{\Zz[1/N]}$. For $j\in J$, the Zariski-closure
of $\pi_j(Z_j)$, denoted as $A_j\subset \Aa^r_{\Zz[1/N]}$, has
relative dimension $<r$.
We denote by $A_0$ the union of the Zariski-closures
(in~$\Aa^r_{\Zz[1/N]}$) of $\pi_j(Z_j)$ for~$j\in J$. This is a
(possibly reducible) subscheme of relative dimension~$<r$, and it
depends only on $(W,\Delta,f)$.
\par
For $i\notin J$, the morphism $\pi_i$ is dominant by definition.  By a
standard result of algebraic geometry (see,
e.g.,~\cite[Cor.\,10.85]{g-w}), there exists an open dense
subset~$U_i$ of~$\Aa^r_{\Zz[1/N]}$ such that $\pi_i$ is flat
over~$U_i$, and we may assume that~$U_i$ is a standard open subset
defined by inverting a non-zero
polynomial~$\varphi_i\in \Zz[1/N,X_1,\ldots,X_r]$, i.e.
\[
  U_i=\Spec(\Zz[1/N,X_1,\ldots, X_r,1/\varphi_i]).
\]

Multiplying $\varphi_i$ by a suitable non-zero integer, we may assume
that in fact~$\varphi_i\in \Zz[X_1,\ldots,X_r]$.  We then define
\[
  \varphi=\prod_{i\notin J}\varphi_i\in \Zz[X_1,\ldots, X_r].
\]

By properties of flat morphisms (see
e.g.,~\cite[Cor.\,14.116\,(1)]{g-w}), we have
\[
  \dim(Z_{i,\Qq})=\dim(\pi_i^{-1}(a))+\dim(\Aa^{r}_{\Qq})=
  \dim(\pi_i^{-1}(a))+r
\]
for $a\in U_i$ (note the base change to~$\Qq$). Since the stratum
$Z_i/\Qq$ is equidimensional with geometric fibers of
dimension~$\eta_i$, we have $\dim(Z_{i,\Qq})=\eta_i$, and therefore
\[
  \dim_{\Qq}(Z_{i,a})=\eta_i-r
\]
for $a\in U_i$.

We denote by $A_1$ the union of $\Aa^r_{\Zz[1/N]}\setminus U_i$ for
$i\notin J$; this is a subscheme of~$\Aa^r_{\Zz[1/N]}$ of
codimension~$\geq 1$. We also denote by $A_2$ the Zariski-closure
in~$\Aa^r_{\Zz[1/N]}$ of the set of $a$ such that $V_a$ has dimension
different from~$d$ over~$\Qq$. This is again a proper subscheme. We
next define $\widetilde{A}=A_0\cup A_1\cup A_2$, a subvariety
of~$\Aa^r_{\Zz[1/N]}$ of relative dimension~$<r$, depending only on
$(W,\Delta,f)$.
\par
Let finally $N_1\geq 1$ be an integer, and let $\mathbf{M}_1$ be a
closed subscheme of~$\mathbf{M}$ of codimension at least~$1$.  Let~$K$
be an object on~$W$ which is fiberwise $V_b$-transverse for all~$b$
outside $\mathbf{M}_1$, and fiberwise semiperverse of weights~$0$ for
primes $p\nmid N_1$. We define~$A=\widetilde{A}\cup \mathbf{M}_1$.
\par
\medskip
\par
\textbf{Claim.} The data
\[
  ((Y_j),N,C,A,\varphi)
\]
satisfies the conditions of Theorem~\ref{th-general-kl-trace}.
\par
\medskip
\par
Let $a\in \Zz^r$ not in~$A(\Qq)$. By construction, the fiber $Z_{i,a}$
is empty for $i\in J$. If~$i\notin J$, on the other hand, then the
polynomials~$\varphi_i$ satisfy $\varphi_i(a)\not=0$,
so~$\varphi(a)\not=0$. If we define~$N_a=\varphi(a)$, then~$a$ defines
(by evaluation) a morphism
\[
  \Spec(\Zz[1/(NN_a)]\to U_i.
\]

The pullback of the flat morphism $\pi_i^{-1}(U_i)\to U_i$ along this
morphism is still flat, and in particular all its fibers have the same
dimension~$\eta_i$ (see, e.g,~\cite[Th,\,14.114]{g-w}, and the
following remarks). But this pullback is the base change
of~$Z_{i,a}\to V_a$ to~$\Zz[1/(NN_a)]$, and hence $Z_{i,a}$ has
relative dimension~$\eta_i-r$ over~$\Zz[1/(NN_a)]$.  It follows that
the subschemes~$Y_{j,a}$ of $Y_j$ are of relative dimension $\leq n-j$
over~$\Zz[1/(NN_a)]$.

For $p\nmid \ell NN_1N_a$ with $\dim(V_{a,\Fp})=d$ and for~$h\in\Ff_p^n$
such that $(a,h)\in \Ff_p^{n+r}-Y_j(\Fp)$, we deduce
from~(\ref{eq-final-bound}) that
\[
  \Bigl|\sum_{x\in V_a(k)}\psi(f(x)+h\cdot x)t_L(x)\Bigr|\leq C|k|^{(j-1)/2}.
\]

Since~$K$ is $V_a$-transverse modulo~$p$ by assumption, this estimate
translates to the desired conclusion that the triple
$((X_{j,a}),NN_a,C)$ is a KL-datum for $(V_a,f|V_a,\mcV_a)$ and the
object $(K_a|V_a)[-\codim(V_a)]$, after replacing $K$ and $L$ by the
necessary shifts.

\subsection{Proof of Lemma~\ref{lm-sheaf}}\label{sec-iso}

This section, which may safely be skipped in a first reading, gives the
proof of the claimed isomorphism of Lemma~\ref{lm-sheaf}.  We observe
that such results are quite standard (see for instance in the
works~\cite{katz-act} or~\cite{katz-mmp} of Katz), and can be
interpreted as the outcome of following line by line the ``classical''
computation (namely, the equalities~(\ref{eq-ft3}),~(\ref{eq-ft4})
and~(\ref{eq-ft5})) using the function-sheaf dictionary.

These computations are in some sense straightforward, except for two
complications:
\begin{itemize}
\item the use of the standard notation from algebraic geometry
  involves a lot of bookkeeping and
  often hide the close parallel with the computation of trace
  functions.
\item the necessity to keep careful track of shifts in the various
  steps, since these are (up to sign) invisible in the trace function,
  but carry crucial information, e.g. in terms of determining when an
  object is semiperverse.
\end{itemize}

We will present the proof in a way which, we hope, illustrates how
similar computations can be performed in fair generality.  It would be
useful to have a rigorous and usable formulation of the heuristic
principle used here that any ``standard'' computation of this type has
a sheaf-theoretic version, and we go a little bit in this direction by
explaining, in general, how certain steps are done on the
sheaf-theoretical level.  Moreover, we note that we stated
Lemma~\ref{lm-sheaf} as a geometric isomorphism, i.e.,
in~$D^b_c(T_{\bar{k}},\bQl)$, but it would not be difficult to upgrade
this to an arithmetic isomorphism, with a suitable Tate twist. We omit
this since this is not needed for our purpose.

We will use a short-hand notation to represent the steps of the
computation of the Fourier transform, similar to one used for similar
purposes in~\cite[Proof of Prop.\,9.20]{ffk}:
\begin{enumerate}
\item For any morphism $f\colon X\to Y$ of~$k$-varieties, with
  coordinates denoted $x$ and~$y$ respectively, and object~$M$ of
  $D^b_c(Y,\bQl)$, we write $M(f(x))$ for~$f^*M$; for instance,
  if~$X=Y\times Z$ and $f$ is the projection $(y,z)\mapsto y$, we
  write $M(y)$ for~$f^*M$.
\item For Artin--Schreier sheaves $\mcL_{\psi(f)}$, we write
  $\psi(f)$.
\item We often drop the $\otimes$ sign, representing multiplication.
\item Given a subvariety~$Y$ of a variety~$X$, with immersion
  $j\colon Y\to X$, we write~$\delta_Y$ for the object~$j_!\bQl$
  on~$X$. 
\item We drop the $R$ prefix before derived functors (this is in fact
  a fairly usual convention).
\item For a morphism $f\colon X\to Y$ of~$k$-varieties, with
  coordinates denoted $x$ and~$y$ respectively, and for an object~$N$
  of~$D^b_c(X,\bQl)$, we write $\sigma_{f(x)=y}M(x)$ for $Rf_!M$; for
  a projection $Y\times Z\to Y$, we write $\sigma_{z}M(y,z)$.
\item We write $=$ for the existence of an isomorphism in the derived
  category, over~$\bar{k}$, i.e., a geometric isomorphism.
\end{enumerate}

These conventions simplify the bookkeeping involved in constructing
various objects, and are more closely related to the corresponding
notation for the trace functions.

With these conventions, the definition of~$R$ becomes
\[
  R=R\pi_{k,!}(p_{2,k}^*L\otimes
  \mcL_{\psi(F)})=\sigma_{\Delta(x)=a}L(x)\psi(F(h,x)). 
\]

Moreover, given an object~$M$ of~$D^b_c(T_k,\bQl)$, with coordinates
$(h,a)$ on~$T$, Deligne's algebraic Fourier transform\footnote{\ More
  precisely, this is the inverse of Deligne's transform, which was
  originally defined with $\psi(\eta\cdot h+\alpha\cdot a)$ instead of
  $\psi(-\eta\cdot h-\alpha\cdot a)$.} of~$M$ is the object of the
same category, but with coordinates $(\eta,\alpha)$ on~$T$, given by
\[
  \ft_{\psi}M(\eta,\alpha)=R\pi_{2,!}(\pi_1^*M\otimes
  \sheaf{L}_{\psi(-\eta\cdot h-\alpha\cdot
    a)})[n+r]=\sigma_{h,a}M(h,a)\psi(-\eta\cdot h-\alpha\cdot a)[n+r]
\]
(we refer to Laumon's paper~\cite[\S\,1]{laumon} for the fundamental
properties of the algebraic Fourier transform; see also
Example~\ref{ex-fourier}).

To prove Lemma~\ref{lm-sheaf}, our goal is to compute this object
for~$M=R$. This is done by copying the computation of the trace
function. More precisely, by definition we get
\begin{align*}
  \ft_{\psi}R(\eta,\alpha)&=\sigma_{h,a}R(h,a)\psi(-\eta\cdot
  h-\alpha\cdot a)[n+r]\\
  &= \sigma_{h,a}\sigma_{\Delta(x)=a}L(x)\psi(f(x)+h\cdot x)
  \psi(-\eta\cdot h-\alpha\cdot a)[n+r].
\end{align*}

Then, using the functoriality of the direct image with compact support
(see, e.g.,~\cite[Prop.\,5.5.1,\,(iii)]{lei-fu}), we can exchange the
two sums to get
\[
  \ft_{\psi}R(\eta,\alpha)=\sigma_x \sigma_{h} L(x)\psi(f(x)+h\cdot x)
  \psi(-\eta\cdot h-\alpha\cdot \Delta(x))[n+r].
\]

(To be more precise: we use the fact that the map
\[
  (\eta,\alpha,h,a,x)\mapsto (\eta,\alpha)
\]
from the subvariety of
$\Aa^n\times\Aa^r\times\Aa^n\times \Aa^n\times W$ defined by
$\Delta(x)=a$ to the affine space~$\Aa^n\times\Aa^r$ has two
factorizations:
\begin{gather*}
  (\eta,\alpha,h,a,x)\mapsto (\eta,\alpha,h,a)\mapsto (\eta,\alpha)
  \\
  (\eta,\alpha,h,a,x)\mapsto (\eta,\alpha,a,x)\mapsto (\eta,\alpha).
\end{gather*}

This implies that the compositions of higher direct images along these
two morphisms coincide; the first composition corresponds to the
definition of~$\ft_{\psi}R$, and the second corresponds to the claim.)

The projection formula (see, e.g.,~\cite[Th.\,7.4.7]{lei-fu}) allows
us to ``pull out'' factors that are independent of a summation
variable, i.e.
\[
  \ft_{\psi}R(\eta,\alpha)=\sigma_x L(x)\psi(f(x)-\alpha\cdot
  \Delta(x))\sigma_h\psi(h\cdot (x-\eta)) [n+r].
\]

Now we apply the following key fact, which is the analogue of the
orthogonality of characters:

\begin{lemma}\label{lm-key}
  We have
  \[
    \sigma_h\psi(h\cdot (x-\eta)) =\delta_{x=\eta}[-2n].
  \]
\end{lemma}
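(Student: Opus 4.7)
The plan is to reinterpret the statement as a computation of $R\pi_{!}\mcL_{\psi(h\cdot(x-\eta))}$, where $\pi$ is the projection dropping the $h$-variable, and to verify it stalk by stalk using proper base change. Both sides are constructible objects on the base $Z=W\times\Aa^n_\eta$, and the claim is that the left-hand side vanishes off the diagonal $\Delta=\{x=\eta\}$ and is constant (up to a shift by $[-2n]$) along it. Since the target $\delta_{x=\eta}[-2n]$ is already the extension by zero from $\Delta$ of its restriction there, it suffices to produce a geometric isomorphism on $\Delta$ and verify the vanishing of the left-hand side on the complement.

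First, fix a geometric point $(x_0,\eta_0)$ of~$Z$. By proper base change for $R\pi_{!}$, the stalk of $\sigma_h\psi(h\cdot(x-\eta))$ at $(x_0,\eta_0)$ is
\[
R\Gamma_c(\Aa^n_{\bar k},\mcL_{\psi\circ\lambda}),
\]
where $\lambda\colon\Aa^n\to\Aa^1$ is the linear form $h\mapsto h\cdot(x_0-\eta_0)$. In the off-diagonal case $x_0\neq \eta_0$, the form $\lambda$ is surjective, so after a linear change of coordinates on $\Aa^n_{\bar k}$ we may assume $\lambda$ is the first coordinate projection; then $\mcL_{\psi\circ\lambda}\cong p_1^*\mcL_{\psi}$, and Künneth gives
\[
R\Gamma_c(\Aa^n_{\bar k},\mcL_{\psi\circ\lambda})\cong R\Gamma_c(\Aa^1_{\bar k},\mcL_{\psi})\otimes R\Gamma_c(\Aa^{n-1}_{\bar k},\bQl).
\]
The first factor vanishes by the standard fact that $R\Gamma_c(\Aa^1,\mcL_{\psi})=0$ for a nontrivial additive character~$\psi$. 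In the diagonal case $x_0=\eta_0$, we have $\lambda=0$, so $\mcL_{\psi\circ\lambda}\cong\bQl$, and $R\Gamma_c(\Aa^n_{\bar k},\bQl)\cong\bQl[-2n](-n)$, which is geometrically isomorphic to $\bQl[-2n]$.

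It remains to upgrade this stalkwise identification to an actual geometric isomorphism in $D^b_c(Z_{\bar k},\bQl)$. The off-diagonal vanishing shows that, geometrically, $\sigma_h\psi(h\cdot(x-\eta))$ equals $i_*i^*\sigma_h\psi(h\cdot(x-\eta))$ where $i\colon\Delta\hookrightarrow Z$ is the closed immersion. Along $\Delta$, the Artin--Schreier sheaf becomes trivial, so base change reduces the restriction $i^*\sigma_h\psi(h\cdot(x-\eta))$ to $R\Gamma_c(\Aa^n_{\bar k},\bQl)\otimes_{\bQl}\bQl_{\Delta}\cong\bQl_{\Delta}[-2n](-n)$. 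Pushing forward by $i$ and forgetting the Tate twist (as allowed by the geometric setting), we obtain $\delta_{x=\eta}[-2n]$, as desired.

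The only subtlety is bookkeeping: keeping track of the shift $[-2n]$ and noting that the Tate twist $(-n)$ disappears because the statement is only claimed over $\bar k$, which is why the paper records Lemma~\ref{lm-sheaf} as a geometric isomorphism. No step is genuinely difficult; the entire content rests on the classical vanishing $R\Gamma_c(\Aa^1,\mcL_{\psi})=0$, Künneth, and the standard computation of compactly supported cohomology of affine space.
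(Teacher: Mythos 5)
Your proof is correct, but it follows a genuinely different route from the paper's. The paper first reduces to the case $n=1$ by Künneth, obtaining the claim that $Rp_!\mcL_{\psi(ab)}=\delta_0[-2]$ for the second projection $p\colon\Aa^1\times\Aa^1\to\Aa^1$, and then pins down the shift not by direct computation but by the observation that the normalized Fourier transform carries perverse sheaves to perverse sheaves: since $\bQl[1]$ is perverse on $\Aa^1$, its Fourier transform $\ft_{\psi}\bQl[1]=(Rp_!\mcL_{\psi(ab)}[1])[1]$ must also be perverse, which forces it to be $\delta_0$. The authors explicitly note this as a way to ``remember and recover'' the shift rather than as the only possible argument. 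You instead do the direct cohomological check that the paper alludes to but skips: stalkwise analysis via proper base change, using the vanishing $R\Gamma_c(\Aa^1,\mcL_{\psi})=0$ for the off-diagonal stalks and the explicit computation $R\Gamma_c(\Aa^n,\bQl)\cong\bQl[-2n](-n)$ on the diagonal, followed by the standard $j_!j^*\to\mathrm{id}\to i_*i^*$ triangle to upgrade the stalkwise vanishing to an honest geometric isomorphism supported on the diagonal. Your route is more elementary and self-contained (it avoids invoking the perversity-preservation theorem for the $\ell$-adic Fourier transform), while the paper's is shorter to state and has the advantage of being a reusable heuristic for remembering normalizations. Your attention to the upgrading step and to the disappearance of the Tate twist in the geometric setting is exactly the kind of bookkeeping the paper cautions about in \S\ref{sec-iso}.
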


\begin{proof}
  The key point here is to have the correct shift. To check it, note
  that by the Künneth formula (see, e.g.,~\cite[Cor.\,7.4.9]{lei-fu}),
  and by an elementary translation which allows us to assume
  that~$\eta=0$, it suffices to consider the case $n=1$ and to prove
  that (in more traditional sheaf notation)
  \begin{equation}\label{eq-ft-delta}
    Rp_!\mcL_{\psi(ab)}=\delta_0[-2]
  \end{equation}
  where $p\colon \Aa^1\times \Aa^1\to \Aa^1$ is the second projection
  and $\delta_0$ coincides with the skyscaper sheaf at~$0\in\Aa^1$.
  This can be checked directly by computing cohomology, but this can
  be remembered and recovered as follows: the left-hand side is, up to
  shift, the Fourier transform of the trivial sheaf on~$\Aa^1$.  We
  know that the result is a shift of $\delta_0$, and to check which is
  the right normalization, we use the fact that the Fourier transform
  of a perverse sheaf is perverse
  (see~\cite[Th.\,1.3.2.3]{laumon}). In this case, this means that
  $\ft_{\psi}\bQl[1]$ is perverse, hence we must have
  \[
    \ft_{\psi}\bQl[1]=(Rp_!\mcL_{\psi(ab)}[1])[1]=\delta_0,
  \]
  which implies~(\ref{eq-ft-delta}).
\end{proof}

Inserting this result in the previous computation, we get
\[
  \ft_{\psi}R(\eta,\alpha)=L(\eta)\psi(f(\eta)) \psi(\alpha\cdot
  \Delta(\eta)) [-n+r].
\]

If we transcribe this back in the classical notation, this concludes the
proof of Lemma~\ref{lm-sheaf}.

\section{Algebraically uniform stratification, II}

We now consider the full statement of Theorem~\ref{th-general-kl-trace},
and we begin the proof in the same manner as before. The global data
is now
\begin{itemize}
\item $T=\Aa^n_{\Zz}\times \mathbf{M}$, with coordinates
  $(h,a)=(h_1,\ldots,h_n,a)$, where
  $a\in \mathbf{M}\subset \Aa^r_{\Zz}$,
\item $X=\Aa^n_{\Zz}\times W$, 
  with coordinates
  $(h,x)=(h_1,\ldots, h_n,x_1,\ldots,x_n)$;
  for the input stratification $\mathcal{X}$ of~$X$, we take
  $(\Aa^n_{\Zz}\times W_i)_i$,
\item $\pi\,:\, X\lra T$ is given by $\pi(h,x)=(h,\Delta(x))$,
\item the function $F\,:\, X\lra \Aa^1$ given by
  \[
    F(h,x)=f(x)+x\cdot h=f(x)+\sum_{i=1}^n x_ih_i
  \]
  viewed as a $T$-morphism $X\lra \Aa^1_T$,
\item $\ell$ is some fixed prime number,
\item $K$ is an object of the derived category $D^b_c(W[1/\ell],\bQl)$,
  adapted to the stratification $\mcW$ of~$W$.
\end{itemize}

Moreover, we may assume that~$\mathbf{M}$ is an integral
scheme.\footnote{\ This is useful to apply later the theorem of
  generic flatness as in~\cite[Cor.\,10.85]{g-w}, where this is an
  assumption for the target scheme.} Indeed, recall that a scheme is
integral if and only if it is reduced and irreducible (see,
e.g.,~\cite[Def.\,3.26]{g-w}), and we can argue for each irreducible
component of~$\mathbf{M}$ separately (recall that we assume
that~$\mathbf{M}$ is reduced).

We apply~\cite[Th.\,2.1]{fouvry-katz} and obtain data $(N,C,\mcM)$
where~$\mcM=(M_i)$ is a finite stratification
of~$(\mathbf{M}\times\Aa^n)_{\Zz[1/N]}$, satisfying properties
analogue to those in Section~\ref{ssec-stratif}.

Exactly as in~\ref{sec-r}, given the local data $(k,\psi,L)$ (which is
the same as previously), we then construct the object
\[
  R\pi_{k,!}(p_{1,k}^*L\otimes\mcL_{\psi(F)})
\]
on $T_k$, which has trace function the desired exponential sums, and we
denote it again by~$R$.

In order to complete the strategy of Section~\ref{sec-algebraic}, we
need to prove as in Section~\ref{ssec-perv-weights} that the object
$R[n]$ is semiperverse on~$T_k$.  As we already hinted, there is a new
difficulty here: we cannot perform a Fourier transform on the parameter
space, which is not in general an affine space, to check this in the
same manner as before. However, once this is proved, the final parts of
the argument reproduce exactly those given in the case
$\mathbf{M}=\Aa^r_{\Zz}$ in the previous section.

To prove the semiperversity, we introduce additional auxiliary variables
ranging over all of~$\Aa^r_k$ to perform Fourier analysis.  Precisely,
we define the object
\[
  R'=\mcL_{\psi(a\cdot b)}\otimes p_{12}^*R
\]
on~$\Aa^n\times\mathbf{M}\times\Aa^r$ (over~$k$) with coordinates
$(h,a,b)$, and with projection $p_{12}\colon (h,a,b)\mapsto (h,a)$.
The trace function of~$R'$ is
\[
  (h,a,b)\mapsto \psi(a\cdot b)t_R(h,a),
\]
(so that $|t_{R'}(h,a,b)|=|t_R(h,a)|$ for all $(h,a,b)$, which explains
the fact that bounds for exponential sums based on~$R$ or~$R'$ are
equivalent).  Moreover, we have isomorphisms
\[
  \mcH^i(R')\simeq \mcL_{\psi(a\cdot b)}\otimes\mcH^i(p_{12}^*R)
\]
for all~$i$ (because $\mcL_{\psi(a\cdot b)}$ is lisse on
$\mathbf{M}\times \Aa^r$), and thus
\begin{align*}
  \supp(\mcH^i(R'))&=\supp(\mcH^i(p_{12}^*R))\\
  &=
  \supp(p_{12}^*(\mcH^i(R)))=
  p_{12}^{-1}(\supp(\mcH^i(R)))=\supp(\mcH^i(R))\times\Aa^r,
\end{align*}
hence
\[
  \dim \supp(\mcH^i(R'))=r+\dim\supp(\mcH^i(R)).
\]

By definition of semiperverse objects, we deduce that~$R[n]$ is
semiperverse if (in fact, also only if) $R'[n+r]$ is semiperverse:
indeed, we then get
\begin{align*}
  \dim\supp(\mcH^i(R[n]))&= \dim\supp(\mcH^{n+i}(R)) \\& = \dim
  \supp(\mcH^{n+i}(R'))-r\\
  &= \dim \supp(\mcH^{i-r}(R'[n+r]))-r \leq -(i-r)-r=-i.
\end{align*}

The following lemma allows us therefore to conclude.

\begin{lemma}
  If~$K$ is semiperverse, then the object $R'[n+r]$ is semiperverse.
\end{lemma}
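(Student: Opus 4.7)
The key idea is to take advantage of the new affine direction $\Aa^r$ provided by the variable $b$: the directions $(h,b)$ together form an ordinary affine space $\Aa^{n+r}$ over the base $\mathbf{M}$, so the relative Deligne--Fourier transform $\ft_\psi$ in those directions is available. By the Katz--Laumon $t$-exactness theorem~\cite{katz-laumon}, $\ft_\psi$ preserves the perverse $t$-structure (with the standard convention absorbing the shift $[n+r]$ into the definition of $\ft_\psi$), so it suffices to establish that $\ft_\psi(R'[n+r])$ is semiperverse on the dual space $\Aa^n\times\mathbf{M}\times\Aa^r$.

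To identify this transform, I mimic the trace-function calculation of Section~\ref{ssec-perv-weights}. Starting from
\[
  t_{R'}(h,a,b)=\psi(a\cdot b)\sum_{\substack{x\in W(k)\\ \Delta(x)=a}} t_L(x)\,\psi(f(x)+h\cdot x),
\]
I insert $\psi(-h\cdot\eta-b\cdot\beta)$, exchange orders of summation, and apply orthogonality: the sum over $h$ forces $x=\eta$, while the sum over $b$ forces $a=\beta$. Together with the summation constraint $\Delta(x)=a$, this confines the transform to the image of the immersion
\[
  \iota\colon W\hookrightarrow \Aa^n\times\mathbf{M}\times\Aa^r,\qquad x\mapsto(x,\Delta(x),\Delta(x)),
\]
where it is identified with $L\otimes \mcL_{\psi(f)}$. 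The sheaf-theoretic realization proceeds exactly as in Section~\ref{sec-iso}, using base change, the projection formula, and two applications of Lemma~\ref{lm-key} (for the $h$- and $b$-sums); the resulting shifts combine with the external $[n+r]$ to cancel, yielding the geometric isomorphism
\[
  \ft_\psi\!\left(R'[n+r]\right)\simeq \iota_!\bigl(L\otimes \mcL_{\psi(f)}\bigr)
\]
in $D^b_c(\Aa^n\times\mathbf{M}\times\Aa^r_{\bar k},\bQl)$, up to a Tate twist that is immaterial for semiperversity.

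It then remains to verify that $\iota_!(L\otimes \mcL_{\psi(f)})$ is semiperverse, and this is essentially immediate: by the standing fibrewise semiperversity hypothesis on $K$, its direct factor $L$ is semiperverse on $W_k$; tensoring with the rank-one lisse sheaf $\mcL_{\psi(f)}$ preserves this property (the cohomology-sheaf supports are unchanged); and $\iota$ is a locally closed immersion, so $\iota_!$ is right $t$-exact for the perverse $t$-structure and sends semiperverse objects to semiperverse objects. This completes the proof. The only technical subtlety is the careful sheaf-theoretic realization of the Fourier transform with correct bookkeeping of the shifts, which is formally identical to the computation carried out in Section~\ref{sec-iso}, with the variable $b$ treated in perfect parallel to $h$.
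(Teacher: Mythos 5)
Your proof is correct and follows essentially the same route as the paper: compute the (relative, over $\mathbf{M}$) Deligne--Fourier transform of $R'[n+r]$ in the $(h,b)$ directions, identify it (via the same orthogonality-of-characters computation) as the extension by zero along the immersion $\iota\colon W\hookrightarrow \Aa^n\times\mathbf{M}\times\Aa^r$, $x\mapsto(x,\Delta(x),\Delta(x))$, of $L$ twisted by a rank-one lisse sheaf, and conclude by right $t$-exactness of $\iota_!$ and Katz--Laumon perversity-preservation of $\ft_\psi$. The only differences from the paper are cosmetic: you use the forward rather than the inverse transform, and you explicitly carry the $\mcL_{\psi(f)}$ factor through the computation (the paper's displayed trace-function calculation drops it, but this is immaterial since it is lisse of rank one).
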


\begin{proof}
  We will prove this by computing the Fourier transform
  $S=\ft_{\psi/\mathbf{M}}(R'[n+r])$ of $R'[n+r]$ \emph{relative
    to~$\mathbf{M}$}, i.e., summing only over the $h$ and~$b$ variables,
  and checking that it is semiperverse.

  As before, we explain the proof using the ``numerical'' Fourier
  transform, identifying it as the trace function of some ``natural''
  object~$S'$; we can then check that $S$ and~$S'$ are indeed isomorphic
  (geometric isomorphism is enough), up to the correct shift, by
  ``sheafifying'' the computation as in Section~\ref{sec-iso}.

  Let~$(\eta,a,\beta)\in (\Aa^n\times\mathbf{M}\times\Aa^r)(k)$, where
  $\beta$ and~$\eta$ are the Fourier variables dual to~$b$ and~$h$,
  respectively. Then the trace function of~$S$ at~$(\eta,a,\beta)$ is
  \begin{multline*}
    \sum_{\substack{b\in k^r\\h\in k^n}} \psi(-b\cdot \beta-h\cdot
    \eta)\psi(a\cdot b)t_R(a,h)= \sum_{\substack{b\in k^r\\h\in k^n}}
    \psi(-b\cdot \beta-h\cdot \eta)\psi(a\cdot b)\sum_{\substack{x\in
        W(k)\\\Delta(x)=a}}\psi(x\cdot h)t_L(x)\\
    =
    \sum_{\substack{x\in W(k)\\\Delta(x)=a}}t_L(x)\sum_{b\in
      k^r}\psi(b(a-\beta))
    \sum_{h\in k^n}\psi(h\cdot (x-\eta)).
  \end{multline*}

  By orthogonality of characters, the sum over~$b$ vanishes
  unless~$\beta=a$. When this is the case, the trace function becomes
  \[
    |k|^r\sum_{\substack{x\in W(k)\\\Delta(x)=a=\beta}}t_L(x)
    \sum_{h\in k^n}\psi(h\cdot (x-\eta)).
  \]

  The sum over~$h$ is then zero unless $x=\eta$. Thus the trace
  function at~$(\eta,a,\beta)$ is also zero unless~$\eta\in W(k)$
  and~$\Delta(\eta)=a=\beta$. Hence
  \[
    t_{S}(\eta,a,\beta)=
    \begin{cases}
      0&\text{ unless $\beta=a$, $\eta\in W(k)$
        and~$\Delta(\eta)=\beta$},\\
      |k|^{r+n} t_L(\eta)&\text{ if $\beta=a$, $\eta\in W(k)$
        and~$\Delta(\eta)=\beta$.}
    \end{cases}
  \]

  Let~$W'\subset \Aa^n\times\mathbf{M}\times \Aa^r$ be the closed
  subvariety defined by
  \[
    W'=\{(\eta,a,\beta)\,\mid\, \beta=a,\quad \eta\in W,\quad
    \Delta(\eta)=\beta\},
  \]
  and~$i$ the corresponding closed immersion. There is an isomorphism
  \[
    \varphi\colon W\to W'
  \]
  given by $\eta\mapsto (\eta,\Delta(\eta),\Delta(\eta))$, with inverse
  $(\eta,a,\beta)\mapsto \eta$. The computation of trace functions
  suggests that~$S$ and $S'$ are isomorphic, up to shifts and twists,
  where $S'=j_!\varphi_*L$. This object $S'$ is semiperverse, since
  $\varphi$ is an isomomorphism and $j$ is a closed immersion (see,
  e.g.,~\cite[Cor.\,4.1.3]{BBD-pervers}).  Thus, once we check this
  isomorphism, as in Section~\ref{sec-iso}, \emph{mutatis mutandis}, the
  proof is completed.
\end{proof}

\section{Analytically uniform stratifications}\label{sec-quantitative-bounds}

We now prove Theorem~\ref{th-quantitative-bounds}, as well as
Theorem~\ref{th-quantitative-bpw}.

We begin with the former. As already indicated, this will be deduced
from Theorem~\ref{th-general-kl}. The basic idea is to apply the latter
to the \emph{universal} family~$W$ which parameterizes subvarieties
$V\subset \Aa^n_{\Zz}$ of suitable ``complexity''. The KL-datum thus
obtained can be specialized to $V$, and by specialization of the
(fixed!) equations defining it, we obtain equations whose coefficients
are polynomials in the parameters of~$V$. The fact that the KL-datum
does not \emph{always} specialize, however, requires an additional
iterative argument.

First, we state formally the elementary lemma which derives analytic
uniformity from algebraic uniformity.

\begin{lemma}\label{lm-simple}
  Let~$r$, $s\geq 0$ be integers.
  Let~$g\in \Zz[y_1,\ldots, y_r,x_1,\ldots,x_s]$ be a
  polynomial. For~$\uple{y}=(y_i)\in \Zz^r$, let
  $g_{\uple{y}}\in\Zz[x_1,\ldots,x_s]$ be the specialized
  polynomial. Then
  \[
    h_c(g_{\uple{y}})\leq h_c(g)+\deg(g)h(\uple{y})+O(1)
  \]
  for all~$\uple{y}\in\Zz^r$, where the implied constant depends only
  on~$r$ and the degree of~$g$.
\end{lemma}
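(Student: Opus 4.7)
The plan is to expand $g$ explicitly in the monomial basis of $\Zz[\uple{y},\uple{x}]$ and read off the coefficients of the specialized polynomial $g_{\uple{y}}$ directly. Writing
\[
g = \sum_{(\alpha,\beta)} c_{\alpha,\beta}\, \uple{y}^{\alpha}\uple{x}^{\beta}
\]
summed over $(\alpha,\beta)\in\Nn^r\times\Nn^s$ with $|\alpha|+|\beta|\le \deg(g)$, the coefficient of $\uple{x}^\beta$ in $g_{\uple{y}}$ is simply $\sum_{\alpha} c_{\alpha,\beta}\,\uple{y}^\alpha$, so bounding $h_c(g_{\uple{y}})$ reduces to bounding each such sum.

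Next I would apply the triangle inequality: for each fixed $\beta$,
\[
\Bigl|\sum_{\alpha} c_{\alpha,\beta}\, \uple{y}^\alpha\Bigr|
\le N\cdot \max_{\alpha,\beta}|c_{\alpha,\beta}|\cdot \bigl(\max(1,\max_i|y_i|)\bigr)^{\deg(g)},
\]
where $N$ is the number of multi-indices $\alpha\in\Nn^r$ with $|\alpha|\le \deg(g)$. The key counting observation is that $N$ depends only on $r$ and $\deg(g)$: indeed $N\le\binom{r+\deg(g)}{r}$, independently of $s$ and~$\beta$. Taking $\log^+$ of the resulting bound immediately yields
\[
h_c(g_{\uple{y}})\le h_c(g)+\deg(g)\cdot\log^+\max_i|y_i| + O_{r,\deg(g)}(1).
\]

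To conclude, I identify $h(\uple{y})$ with $\log^+\max_i|y_i|$ up to a bounded additive constant: the point $\uple{y}$, viewed as a zero-dimensional subvariety of $\Aa^r$, has degree $1$ and is cut out by the linear polynomials $X_i-y_i$ whose coefficient height is exactly $\log^+\max_i|y_i|$, so $h(\uple{y})\le 1+\log^+\max_i|y_i|$ by the defining infimum. Since the target estimate already carries an $O(1)$ error, this absorbs the additive constant harmlessly. There is no substantive obstacle; the two trivial subtleties are the switch between $\log$ and $\log^+$ (which only matters when $\uple{y}=0$, where both sides are $O(1)$) and checking that the counting bound on $N$ really is independent of~$s$, both of which are immediate.
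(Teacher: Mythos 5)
Your core computation is correct and matches the paper's own argument exactly: expand $g$ in the monomial basis, observe that each coefficient of $g_{\uple{y}}$ is the sum $\sum_\alpha c_{\alpha,\beta}\,\uple{y}^\alpha$, apply the triangle inequality, note that the number $N$ of multi-indices $\alpha$ with $|\alpha|\leq\deg(g)$ depends only on $r$ and $\deg(g)$ (not on $s$ or $\beta$), and take $\log^+$. This gives
\[
h_c(g_{\uple{y}})\le h_c(g)+\deg(g)\,\log^+\!\max_i|y_i|+O_{r,\deg(g)}(1),
\]
and that is precisely the paper's computation.

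The trouble is the final identification step. The lemma asserts an upper bound $h_c(g_{\uple{y}})\le h_c(g)+\deg(g)\,h(\uple{y})+O(1)$, so to pass from the displayed bound you would need the \emph{lower} bound $\log^+\!\max_i|y_i|\le h(\uple{y})+O(1)$. Instead you prove the opposite inequality $h(\uple{y})\le 1+\log^+\!\max_i|y_i|$ by exhibiting the linear cut-out $X_i-y_i$ and invoking the infimum. This is the wrong direction, and the $O(1)$ cannot rescue it: the two quantities are not equal up to a bounded constant. Indeed, if one literally uses the subvariety-height definition from the paper, then for $r=2$ and $\uple{y}=(2^k,2)$ the point $\{\uple{y}\}$ is cut out by $X_1-X_2^k$ and $X_2-2$, giving $h(\{\uple{y}\})\le 1+\log 2$ uniformly in $k$, while $\log^+\!\max_i|y_i|=k\log 2\to\infty$. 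Taking $g=Y_1$ (so $h_c(g)=0$, $\deg g=1$, $g_{\uple{y}}=2^k$) would then violate the stated inequality for large $k$. So the reverse inequality you need is not merely unproved; under your reading of $h(\uple{y})$ it is false, and so would be the lemma itself. The resolution is that the paper is implicitly using $h(\uple{y})$ to mean the naive logarithmic height $\log^+\!\max_i|y_i|$ of the integer tuple (its own proof writes $\deg(g)h(\uple{y})$ directly in place of $\deg(g)\log^+\!\max_i|y_i|$ with no intermediate step). With that reading your computation already finishes the proof, and the appeal to the subvariety-height formula should be dropped entirely.
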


\begin{proof}
  This is almost tautological: we write
  \[
    g=\sum_{I,J}\lambda_{I,J}y^Ix^J,
  \]
  for some~$\lambda_{I,J}\in\Zz$, all but finitely many of which are
  zero, where~$I$ runs over $r$-tuples of non-negative integers
  and~$J$ over $s$-tuples of non-negative integers, and we use the
  multi-index notation for monomials. Then
  \[
    g_{\uple{y}}=\sum_{J}\lambda_{\uple{y},J}x^J,
  \]
  with
  \[
    \lambda_{\uple{y},J}=\sum_{I}\lambda_{I,J}y^I,
  \]
  and hence
  \[
    h_c(g_{\uple{y}})=\log^+ \max_J|\lambda_{\uple{y},J}|
  \]

  Since
  \[
    |\lambda_{\uple{y},J}|\ll
    (\max_{i}|y_i|)^{\deg(g)}(\max_{I,J}|\lambda_{I,J}|),
  \]
  for any~$J$, where the implied constant depends only on~$r$ and the
  degree of~$g$ (it can be bounded by the number of monomials of
  degree~$\leq \deg(g)$ in~$r$ variables), we get
  \[
    h_c(g_{\uple{y}})\leq \deg(g)h(\uple{y})+h_c(g)+O(1),
  \]
  which is the result.
\end{proof}

\begin{proof}[Proof of Theorem~\ref{th-quantitative-bounds}]
  Let $n$, $r$, $\delta$ be given non-negative integers.
  We denote by~$\mathbf{M}$ the (affine) space parameterizing
  $r$-tuples of integral polynomials in~$n$ variables of
  degree~$\leq \delta$; its points will be denoted
  $\uple{g}=(g_i)_{1\leq i\leq r}$, the coordinates corresponding to
  the coefficients of the polynomials. The relative dimension~$m$
  of~$\mathbf{M}=\Aa^m_{\Zz}$ is the number of these coefficients, and
  depends only on $(n,r,\delta)$. Let $W\subset \Aa^{m+n}_{\Zz}$ be
  the ``universal'' closed subscheme of the given type and
  $\Delta\colon W\to \Aa^m_{\Zz}$ the corresponding projection: we
  have
  \[
    W=\{((g_i),x)\in \Aa^{m+n}\,\mid\, g_i(x)=0\text{ for all } i\},
  \]
  and $\Delta((g_i),x)=(g_i)$.  Let further~$f\in \Zz[x_1,\ldots,x_n]$
  be given; this defines a morphism
  $\widetilde{f}\colon W\to\Aa^1_{\Zz}$ given by
  $\widetilde{f}(\uple{g},x)=f(x)$.
  
  Note that for a given tuple $\uple{g}$, the fiber $W_{\uple{g}}$ is
  naturally identified with the closed subscheme~$V$ given by the
  vanishing of the $g_i$'s, and the restriction of~$\widetilde{f}$ to this
  fiber is then identified with the polynomial~$f$ on this closed
  subscheme~$V$. In particular, the data of \emph{any} given closed
  subscheme~$V$ of this type and the polynomial~$f$ appears as one of
  these $(W_{\uple{g}},\widetilde{f}|W_{\uple{g}})$.

  We apply Theorem~\ref{th-algebraic-dependency} with $\widetilde{f}$
  and with the invertible function~$g=1$ (not to be confused with the
  polynomials $\uple{g}$). We thus obtain the data
  \[
    ((Y_j),N,C,A, \varphi)
  \]
  from the theorem, such that 
  \[
    (Y_{j,\uple{g}},N\varphi(\uple{g}),C)
  \]
  is a KL-datum for $(W_{\uple{g}},\widetilde{f}|W_{\uple{g}})$ for
  all~$\uple{g}$ outside a codimension one subscheme, which we
  denote~$\mathbf{M}_1$ here. Since the equations defining any stratum
  $Y_{j,\uple{g}}$ can be obtained simply by specializing some of the
  variables in a \emph{fixed} set of equations defining $Y_j$, it
  follows that for $\uple{g}\notin \mathbf{M}_1$, the degree
  of~$Y_{j,\uple{g}}$ is bounded in terms of~$(n,r,\delta)$ only, and
  the coefficients of the equations are bounded polynomially (see
  Lemma~\ref{lm-simple}). Moreover, the value of~$N\varphi(\uple{g})$
  is also bounded polynomially in terms of the coefficients of the
  polynomial~$\uple{g}$. This provides the conclusion of
  Theorem~\ref{th-quantitative-bounds} for all~$V$ such that the
  corresponding tuple~$\uple{g}$ is \emph{not} in~$\mathbf{M}_1$, up
  to changing notation (e.g., the integer~$N$ in that statement is the
  value~$N\varphi(a)$).
  
  In order to also obtain a similar result for~$\mathbf{M}_1$, we
  iterate this process by considering the restricted family
  $W_1=\Delta^{-1}(\mathbf{M}_1)$, with the morphism
  \[
    \Delta_1\colon W_1\to \mathbf{M}_1.
  \]
  
  Applying Theorem~\ref{th-general-kl}, we obtain other data
  $(\mcY_1,N_1,C_1,A_1,\varphi_1)$, which is suitable for $\uple{g}$
  in $\mathbf{M}_1$ outside a codimension~$1$ subscheme. Since we only
  need to repeat this process finitely many times to obtain KL-datum
  for all $\uple{g}$ (note that when the exceptions form a
  dimension~$0$ subscheme, i.e., a finite set, we just pick an
  individual KL-stratification of each of these exceptions), which
  will involve a fixed number of constants $(N_i,C_i)$, we obtain the
  conclusion.
\end{proof}

The proof of Theorem~\ref{th-quantitative-bpw} is similar. We refer
the reader to Section~\ref{ssec-thin} for the notation.

\begin{proof}[Proof of Theorem~\ref{th-quantitative-bpw}]
  Fix an integer~$d$ with $0\leq d\leq D$.  We apply
  Theorem~\ref{th-general-kl-trace} to the following data:
  \begin{enumerate}
  \item the space~$\mathbf{M}$ is the parameter space (via the
    coefficients) of polynomials
    \[
      F\in \Zz[y,x_1,\ldots,x_n]
    \]
    of total degree~$d$, monic in~$y$; we denote by $m$ its relative
    dimension,
  \item $W=\mathbf{M}\times \Aa^n_{\Zz}$ and
    $\Delta\colon W\to \mathbf{M}$ is the canonical projection,
  \item $\mcW$ is a stratification of $W$ adapted to the object
    \[
      K=R\pi_!\bQl[m+n]((m+n)/2)
    \]
    of~$D^b_c(W,\bQl)$, where $\pi\colon \widetilde{W}\to W$ is the
    projection from the subscheme
    \[
      \widetilde{W}=\{(F,\uple{x},y)\in W\times \Aa^1_{\Zz}\,\mid\,
      F(\uple{x},y)=0\}.
    \]
  \end{enumerate}

  The assumption that~$F\in\mathbf{M}$ is monic in~$y$ is used in the
  following way: it implies that the morphism $\pi$ is quasi-finite,
  i.e., that the fibers over all~$x\in \Aa^n$ are finite. Indeed, for
  given $(F,\uple{x})\in W$, the equation
  \[
    F(\uple{x},y)=0
  \]
  for the values of~$y\in\Aa^1$ defining the fiber is monic in~$Y$,
  hence has finitely many solutions. This property of~$\pi$ implies that
  the object~$K$ is semiperverse (in fact, even perverse) and mixed of
  weights $\leq 0$ (see for instance~\cite[Cor.\,4.1.3]{BBD-pervers} for
  the first property). 
  
  The other key point is that the trace function of~$K$ is, up to
  normalization, given by
  \[
    (F,\uple{x})\mapsto \sum_{\substack{y\in \Fp\\F(y,\uple{x})=0}}1,
  \]
  so that, by the proper base change theorem, looking at the fiber
  (of~$\Delta$ now) above a point $F\in \mathbf{M}$, we obtain an object
  $K_F$ with trace function given (up to normalization again) by
  \[
    \uple{x}\mapsto \sum_{\substack{y\in
        \Fp\\F(y,\uple{x})=0}}1=r_F(\uple{x}),
  \]
  and hence such that the exponential sums are exactly the sums
  $S_F(\uple{h},p)$ of~(\ref{eq-sfp}).

  Finally, we note that (by Example~\ref{ex-transverse}), the object
  $K$ is $V_F$-transverse for any~$F$.
 
  After applying Theorem~\ref{th-general-kl-trace}, we obtain a KL-datum
  $(\mcX,N,C)$; arguing as in the proof of
  Theorem~\ref{th-quantitative-bounds} with Lemma~\ref{lm-simple}, we
  have the desired quantitative bounds for all parameters $F$ outside of
  a codimension~$1$ subscheme. We then iterate this procedure with the
  corresponding subfamily, once more in the same manner as the proof of
  Theorem~\ref{th-quantitative-bounds}.

  Finally, the added information that the strata may be defined by
  homogeneous polynomials is deduced by the same argument
  as~\cite[p.\,131]{fouvry-katz}, which shows that (for sums of the type
  we consider), any stratum $X_{j,F}$ may be replaced by the homogeneous
  subscheme of $X_{j,F}$ defined by the vanishing of the homogeneous
  components of a generating set of the ideal defining $X_{j,F}$. This
  procedure will preserve the boundedness properties of the original
  stratification.
\end{proof}

\appendix
\numberwithin{equation}{section}

\section{Intuition for analytic number theorists (by
  A. Forey, J. Fresán and E. Kowalski)}

This Appendix reproduces, with slight modifications, Sections~1 to~4
of Appendix~E of the book~\cite{ffk} of Forey, Fresán and Kowalski.
The goal is to provide readers who have a background in analytic
number theory with some intuition and feeling for objects such as
$\ell$-adic complexes and perverse sheaves.

The focus here concerns trace functions of \emph{more than one
  variable}.  The theory of trace functions in \emph{one} variable is
more accessible, as the algebraic objects can be presented more
concretely using Galois theory of function fields.  Some familiarity
with this point of view will certainly also be very helpful in
developing intuition. A concise introduction can be found in the Pisa
survey of Fouvry, Kowalski and Michel~\cite{pisa}, and a more detailed
treatment is contained in the lectures of Michel at the 2016 Arizona
Winter School~\cite{arizona}.

We fix a finite field $k$, and denote by $k_n$ the extension of $k$ of
degree~$n$ inside a fixed algebraic closure~$\bar{k}$.  For simplicity
of notation, we will mostly speak about trace functions on the affine
space~$\Aa^m$ for some integer $m\geq 0$. However, it will be implicit
that most of what we discuss can be done for any algebraic variety $Y$
over $k$ (and this is needed, for instance because we often naturally
wish to restrict a trace function to a subvariety, where some particular
property holds), for instance for powers of the multiplicative group
$\Gg_m$ (i.e., $Y$ such that $Y(k_n)=(k_n^{\times})^d$ for some
$d\geq 0$). The reader should keep in mind that for such a subvariety,
of dimension~$d\leq m$, the size of the finite set~$Y(k_n)$ of points
of~$Y$ with coordinates in~$k_n$ is approximately $|k_n|^{d}$ when $n$
is large.

Throughout, we fix a non-trivial additive character
$\psi\colon k\to \Cc^{\times}$ and, for $n\geq 1$, we define
\begin{align*}
  \psi_n \colon k_n &\longrightarrow \Cc^{\times} \\
  x &\longmapsto \psi(\Tr_{k_n/k}(x)).
\end{align*}

We finally note that we will completely ignore (here) the distinction
between $\Qlb$ and~$\Cc$.

\subsection{Trace functions}\label{ssec-trace}

The concrete origin for the use of methods of algebraic geometry and
étale cohomology in analytic number theory lies in trace functions,
and especially in exponential sums. Properly speaking, a trace
function on~$\Aa^m$ is the data of a family $(t_n)_{n\geq 1}$ of
functions $k_n^m\to \Cc$, and it is associated to some algebraic
object~$M$, which we call a ``coefficient object''.  This object is
not uniquely determined by~$(t_n)$, but we will not worry about this
matter in this appendix.

The first examples of trace functions arise from polynomials
$f\in k[X_1,\ldots,X_m]$ by means of 
\begin{equation}\label{eq-simplest-trace}
  t_n(x_1,\ldots,x_m)=\psi_n(f(x_1,\ldots,x_m));
\end{equation}
the corresponding coefficient object is denoted by $\mcL_{\psi(f)}$. Many other examples are then obtained by applying various operations,
which are known to preserve the set of trace functions (these are
operations on the coefficient objects, which are reflected in a specific
operation at the level of trace functions). These operations include the
following, where we indicate the algebraic notation for the
corresponding coefficient objects:
\begin{itemize}
\item The constant function $1$ is associated to the coefficient object 
  $M=\bQl$.
\item The sum of the trace functions associated to $M_1$ and $M_2$ is
  associated to $M_1\oplus M_2$ (and if a coefficient object~$M$ has
  this form, we also say that~$M_1$ and~$M_2$ are \emph{direct
    factors} of~$M$).
\item If $(t_n)$ is a trace function associated to~$M$, then
  $((-1)^kt_n)$ is a trace function for each integer $k\in \Zz$,
  associated to a coefficient denoted by $M[k]$ and called a ``shift''
  of $M$.
 
\item If $(t_n)$ is a trace function associated to~$M$, then
  $(|k_n|^r t_n)$ is a trace function for each integer~$r \in \Zz$,
  associated to a coefficient denoted by $M(-r)$ and called a ``(Tate)
  twist'' of $M$.
  
\item The product of the trace functions associated to $M_1$ and $M_2$
  is associated to $M_1\otimes M_2$.
\item If $f=(f_1,\ldots,f_d)\colon \Aa^m\to \Aa^d$ is a tuple of
  polynomials in $k[X_1,\ldots,X_m]$, and $s=(s_n)$ is a trace
  function on~$\Aa^d$ associated to a coefficient~$N$, then
  $$
  t_n(x_1,\ldots,x_m)=s_n(f(x_1,\ldots,x_m))
  $$
  defines a trace function $(t_n)$ on~$\Aa^m$, which we also denote by 
  $s\circ f$. The corresponding coefficient is $f^*N$.
\item If $f=(f_1,\ldots,f_d)\colon \Aa^m\to \Aa^d$ is a tuple of
  polynomials in $k[X_1,\ldots,X_m]$, and $t=(t_n)$ is a trace function
  on~$\Aa^m$, associated to a coefficient object~$M$, then
  \begin{equation}\label{eq-sum-fiber}
    s_n(y_1,\ldots, y_d)=\sum_{\substack{x\in k_n^m\\f(x)=y}}t_n(x)
  \end{equation}
  defines a trace function on~$\Aa^d$; the associated coefficient object is
  denoted by $Rf_!M$.
\end{itemize}

\begin{example}[Fourier transform]\label{ex-fourier}
  This formalism is already sufficient to explain Deligne's Fourier
  transform. Let $m\geq 1$ be an integer, and consider the projections
  $$
  p_1,\ p_2\colon \Aa^{2m}\to \Aa^m
  $$
given by 
  $$
  p_1(x_1,\ldots,x_m,y_1,\ldots,y_m)=(x_1,\ldots,x_m),\quad\quad
  p_2(x_1,\ldots,x_m,y_1,\ldots,y_m)=(y_1,\ldots,y_m).
  $$
  We write 
  \[
  X\cdot Y=X_1Y_1+\cdots+X_mY_m
  \]
  for variables $X_i$ and $Y_j$. This is a polynomial with coefficients
  in~$k$, so the functions
  $$
  F_n(x,y)=\psi_n(-x_1y_1-\cdots -x_my_m)
  $$
  define a trace function $F=(F_n)$ on~$\Aa^{2m}$, associated to the
  coefficient object $\mcL_{\psi(-X\cdot Y)}$.
  
  Let $t=(t_n)$ be a trace function on~$\Aa^m$ with coordinates $(x_1, \ldots, x_m)$. Then the discrete
  Fourier transforms $(\widehat{t}_n)$, which are defined for $n\geq 1$
  and $y\in k_n^m$ by
  $$
  \widehat{t}_n(y)=\sum_{x\in k_n^m}t_n(x)F_n(x,y)=
  \sum_{x\in k_n^m}t_n(x)\psi_n(-x\cdot y), 
  $$
  also define a trace function $\widehat{t}=(\widehat{t}_n)$. Indeed,
  for any $y$, the set of all $x\in k_n^m$ can be identified with the
  set of $(x,y)\in k_n^{2m}$ such that $p_2(x,y)=y$, and we have
  $t_n(x)=t_n(p_1(x,y))$, so that if~$t$ is associated to the
  coefficient object~$M$, then the formalism above shows that $\widehat{t}$ is
  associated to
  $$
  \widehat{M}=Rp_{2!}(p_1^*M\otimes \mcL_{\psi(-X\cdot Y)}).
  $$
\end{example}

\subsection{Weights and purity: lisse sheaves}\label{ssec-weight-lisse}

The formalism of trace functions is useful in analytic
number theory \emph{because} of Deligne's Riemann hypothesis over finite
fields. This also leads to some understanding of the important
qualitative differences between various types of trace functions---corresponding to classes of coefficients which may (for instance) be
lisse sheaves, constructible sheaves, complexes of constructible
sheaves, or perverse sheaves. We will try in this and the following
sections to provide the readers with some intuition of the concrete meaning of these
notions.

The key concept (due to Deligne) is that of a coefficient~$M$ which is
\emph{punctually pure}, or \emph{pure}, of some weight~$w\in\Zz$. The
main conceptual difficulty is that the meaning of this property for the
corresponding trace function is not straightforward in general.

The simplest case (from which the others will be derived) is that of $M$
which is a single ``lisse sheaf''. \emph{In that case}, the concrete
meaning\footnote{\ But not exactly the precise definition.} of $M$ being
\emph{punctually pure of weight~$w$}, in terms of the trace function $t=(t_n)$,
is that there exist
\begin{itemize}
\item an integer $r\geq 0$, the \emph{rank} of~$M$, 
\item for each $n\geq 1$ and~$x\in k_n^m$, a unitary matrix
$\Theta_M(x;k_n)\in \Un_r(\Cc)$, well-defined up to conjugacy, 
\end{itemize}
so that the following equality holds:  
$$
t_n(x)=|k_n|^{w/2}\Tr(\Theta_M(x;k_n)).
$$
In particular, note that this implies the estimate
$$
|t_n(x)|\leq r|k_n|^{w/2}
$$
for all~$n$ and~$x\in k_n^m$.

In the remainder of this appendix, we will sometimes say that a lisse
sheaf, or its trace function, is ``pure'' instead of the more correct
``punctually pure''.

\begin{remark}
  The reader may ask why we speak of a conjugacy class of unitary
  matrices instead of its multiset of eigenvalues, which contains the
  same information. The point is that if one is studying the
  \emph{distribution} of values of the trace function, as in Deligne's
  Equidistribution Theorem (or those studied in~\cite{ffk}), then it is
  essential to have this interpretation.

  The matrix $\Theta_M(x;k_n)$ is not arbitrary in~$\Un_r(\Cc)$. For
  instance, its eigenvalues (which of course determine the trace) are
  Weil numbers of weight~$0$, i.e., algebraic numbers in~$\Cc$ for which
  all Galois conjugates have modulus~$1$. Moreover, if $n'$ is a
  multiple of~$n$, then $x\in k_n^m$ can also be viewed as an element of
  $k_{n'}^m$ through the inclusion $k_n \subset k_{n'}$, and the formula
  $$
  \Theta_M(x;k_{n'})= \Theta_M(x;k_{n})^{n'/n}
  $$
  holds (\ie the eigenvalues of the matrix $\Theta_M(x;k_{n'})$ are
  those of $\Theta_{M}(x;k_n)$ raised to the power~$n'/n$).
\end{remark}

As one can expect, the trace functions defined by the
formulas~(\ref{eq-simplest-trace}), associated to $\mcL_{\psi(f)}$, are of this type, with $r=1$, $w=0$, and the matrix $\Theta(x;k_n)$ reduced
to the single complex number of modulus one $\psi_n(f(x))$. Moreover, it
is also intuitively clear (and true) that some of the operations
discussed above will respect the special class of trace functions
associated to pure lisse sheaves.

For instance:
\begin{itemize}
\item If $t$ and~$t'$ are trace functions associated to objects $M$
  and~$N$ which are both lisse sheaves pure of (the same) weight~$w$,
  then $t+t'$ is also pure of weight~$w$; we have 
  \[
  \Theta_{M\oplus N}(x;k_n)=\Theta_M(x;k_n)\oplus \Theta_N(x;k_n).
  \]
\item If $t$ and~$t'$ are trace functions associated to objects $M$
  and~$N$ which are both lisse sheaves pure of weights~$w$ and~$w'$,
  respectively, then $tt'$ is also pure of weight~$w+w'$. In other
  words, $M\otimes N$ is still a lisse sheaf, pure of that weight; in
  fact, we have 
  \[
  \Theta_{M\otimes N}(X;k_n)=\Theta_M(x;k_n)\otimes \Theta_N(x;k_n).
  \]
\item If $f=(f_1,\ldots,f_d)\colon \Aa^m\to \Aa^d$ is a tuple of
  polynomials in $k[X_1,\ldots,X_m]$, and $s$ is a trace function
  on~$\Aa^d$ associated to a lisse sheaf of weight~$w$, then $s\circ f$
  is also pure of weight~$w$. In other words, $f^*N$ is still a lisse
  sheaf, pure of weight~$w$; in fact, we have
  \[
  \Theta_{f^*N}(x;k_n)=\Theta_N(f(x);k_n).
  \]
\end{itemize}

But elementary examples show that the crucially important operation of
``summing over the fiber'' (see~(\ref{eq-sum-fiber})) does not always
send a single lisse sheaf to a lisse sheaf, and may also not map a trace
function which is pure of some weight to another one.

\begin{example}\label{ex-trace-fns}
  (1) Let $m=d=1$ and $f\in k[X]$ a polynomial of degree $2$, viewed
  as a map from $\Aa^1$ to itself. Assume that the characteristic
  of~$k$ is different from~$2$. We consider the trace function $(t_n)$
  with $t_n(x)=\psi_n(x)$, associated to the lisse sheaf
  $\mcL_{\psi(X)}$ (of weight~$0$), and the trace function $(s_n)$
  defined by
  $$
  s_n(x)=\sum_{\substack{y\in k_n\\ f(y)=x}}t_n(y)=\sum_{\substack{y\in
      k_n\\ f(y)=x}}\psi_n(y),
  $$
  for $n\geq 1$ and $x\in k_n$, which is associated to the coefficient
  object $Rf_!\mcL_{\psi(X)}$. For most $x$, the value of $s_n(x)$ is
  either $0$ (if $f(y)=x$ has no solutions in $k_n$) or a sum of two
  roots of unity, but for the single point $x_0=f(y_0)$, where $y_0$
  is the unique zero of the derivative of~$f$, the value $s_n(x_0)$ is
  a single root of unity (note that $y_0$, and hence $x_0$, belongs to
  $k$, so it also belongs to $k_n$ for all $n$, but the value of
  $s_n(x_0)$ does vary with $n$).

  (2) We consider $m=2$ and the trace function $(t_n)$ defined by
  $t_n(x,y)=\psi_n(xy^2)$ for $(x,y)\in k_n^2$. It is associated to the coefficient object 
  $\mcL_{\psi(XY^2)}$, which is pure of weight~$0$. Let $d=1$ and
  $f=X$. Then $Rf_!\mcL_{\psi(XY^2)}$ has the trace function $(s_n)$
  such that
  $$
  s_n(x)=\sum_{y\in k_n} \psi_n(xy^2)=\begin{cases}
    \text{a quadratic Gauss sum}& \text{ if } x\not=0,\\
    |k_n|&\text{ if } x=0.
  \end{cases}
  $$
\end{example}

Neither of these examples of trace functions are associated to a
single punctually pure lisse sheaf. However, it turns out that the
underlying reason is not the same. In Example~\ref{ex-trace-fns}, (1),
the issue is that $(s_n)$ is associated to a single constructible
sheaf which is ``not lisse'' at the point~$x_0$. In
Example~\ref{ex-trace-fns}, (2), the issue is that $(s_n)$ is
associated to a ``complex'' of constructible sheaves, i.e., not to a
single sheaf.

\subsection{Weights and purity: constructible sheaves and complexes}\label{ssec-weight}

In fact, the most general source of trace functions are \emph{(bounded)
  mixed complexes of constructible sheaves}.  We now try to outline the
concrete interpretation of these more general conditions.

The first step goes from a single lisse sheaf to a \emph{single
  constructible sheaf}. Such a sheaf is \emph{(punctually) pure of
  weight~$w$} if there is a ``stratification''
$$
\emptyset=X_0\subset X_1\subset \cdots\subset X_q=\Aa^m
$$
of $\Aa^m$, where $X_i$ is a closed subvariety of~$X_{i+1}$, so
that the restriction of~$M$ to each of the pieces~$X_{i+1}\setminus X_i$
is a single lisse sheaf, punctually pure of weight~$w$, and of some
rank~$r_i\geq 0$ (which in general depends on~$i$).

Concretely, for a given $x\in k_n^m$, there exists a unique $i$ such that
$x\in X_{i+1}\setminus X_i$, and then there exists a unitary matrix
$\Theta_M(x;k_n)$ of size $r_i$ such that
\begin{equation}\label{eq-trace-tn}
  t_n(x)=|k_n|^{w/2}\Tr(\Theta_M(x;k_n)).
\end{equation}

\begin{example}
  Example~(1) above is of this kind, with the stratification
  $$
  \emptyset\subset \{x_0\} \subset \Aa^1,
  $$
  and with $r_0=1$ and $r_1=2$. On $\{x_0\}$, the unique eigenvalue is
  $s_n(x_0)=\psi_n(y_0)$, viewing $x_0$ as belonging to $k_n$. On $\Aa^1\setminus \{x_0\}$, the two
  eigenvalues are either opposite (hence the trace is zero) if
  $x\notin f(k_n)$, or are given by $\psi_n(y)$, for $y$ ranging over
  the two roots of the quadratic equation~$f(y)=x$.
\end{example}

More generally, Deligne defined a \emph{mixed constructible sheaf} of
weights $\leq w$ by the condition that there is a filtration with
associated punctually pure quotients~$M_j$, each of some weight
$w_j\leq w$. Concretely, this implies that the trace function $t=(t_n)$
is given by
$$
t_n(x)=\sum_{j\in J}t_{n,j}(x)
$$
for some finite set~$J$, where each family $(t_{n,j})_{n\geq 1}$ is the
trace function of a constructible sheaf which is pure of weight
$w_j\leq w$.
 
Finally, the most general type of trace functions arises from
objects~$M$ that are \emph{complexes of constructible sheaves}. Such a
complex gives in particular rise to a \emph{sequence}
$(\mcH^i(M))_{i\in\Zz}$ of constructible sheaves, with $\mcH^i(M)=0$ for
all but finitely many $i$, in such a way that
$$
t_n(x)=\sum_{i\in\Zz}(-1)^it_{n,i}(x)
$$
for all $n\geq 1$ and $x\in k_n$, where $(t_{n,i})_{n\geq 1}$ is the
system of trace functions for the constructible
sheaf~$\mcH^i(M)$. (These sheaves are called the \emph{cohomology
  sheaves} of the complex~$M$.)  These complexes can be seen as defining
the objects of the derived category $D_c^b(X,\bQl)$ on an algebraic
variety~$X$, which appears throughout the paper.\footnote{\ Although the
  full definition involves describing also morphisms, and especially
  possible isomorphisms, among these objects, which are quite subtle.}

\begin{example}
  Example~(2) above is obtained from a complex of constructible
  sheaves~$M$, where there are two non-zero pieces, namely $\mcH^1(M)$
  and~$\mcH^2(M)$.

  The sheaf~$\mcH^1(M)$ is constructible for the stratification
  $$
  \emptyset\subset \{0\}\subset \Aa^1,
  $$
  with the piece on~$\{0\}$ of rank~$0$, and the piece
  on~$\Aa^1\setminus \{0\}$ of rank~$1$, pure of weight~$1$, with the
  corresponding unique eigenvalue equal to the quadratic Gauss sum
  $$
  \sum_{y\in k_n} \psi_n(xy^2)
  $$
  for $x\in k_n\setminus \{0\}$.

  The sheaf $\mcH^2(M)$ is also constructible, for the same
  stratification (but this is not a general feature), with
  the lisse sheaf of rank~$0$ on $\Aa^1\setminus \{0\}$, and a piece of
  rank~$1$ of weight~$2$ at $\{0\}$, where the corresponding unitary
  matrix has eigenvalue $1$ (so the value of the trace function is
  $|k_n|$, see~(\ref{eq-trace-tn}).
\end{example}

However, for a complex~$M$, the definition of what it means that $M$ is
\emph{pure of weight~$w$} is much more subtle than for a single
sheaf. In particular, it does \emph{not} mean that each piece $\mcH^i(M)$ is itself a punctually pure sheaf of
  weight~$w$. More precisely, one defines first the \emph{mixed
  complexes of weights $\leq w$}, which are those such that $\mcH^i(M)$
is a mixed constructible sheaf of weights $\leq w+i$ for any
$i\in\Zz$. There is then furthermore defined another complex $\dual(M)$,
called the \emph{Verdier dual} of~$M$, and $M$ is said to be pure of
weight~$w$ if $M$ is mixed of weights~$\leq w$ and~$\dual(M)$ is mixed
of weights $\leq -w$.

\begin{remark}
  (1) For a single lisse sheaf~$M$ which is punctually pure of
  weight~$0$, and viewed as a complex placed in degree~$0$, the
  corresponding complex has $\mcH^0(M)=M$ and $\mcH^i(M)=0$ for all
  $i\not=0$. One can prove that the Verdier dual is a
  complex~$\dual(M)$ such that $\mcH^{-2m}(\dual(M))$ is a lisse sheaf
  which is pure of weight $-2m$ and all the other cohomology sheaves
  vanish, so that the two definitions of purity coincide for lisse
  sheaves. In fact, the trace function of $\dual(M)$ is \emph{in this
    case} the complex conjugate of the trace function of~$M$.
  \par
  (2) In practice, if an analytic number theorist is interested in a
  single trace function (e.g., one that represents a concrete family of
  exponential sums which one is interested in estimating) and one is not applying further
  operations like $Rf_!$, then one can quite often reduce to the case of
  a single lisse sheaf. This is for example the case for the
  hyper-Kloosterman sums in two variables
  $$
  \Kl_3(x;k_n)=\frac{1}{|k_n|}\sum_{\substack{a,b,c\in
      k_n^{\times}\\abc=x}} \psi_n(a+b+c),
  $$
  or the famous sums
  $$
  FI(x,y;k_n)=\sum_{z\in k_n^{\times}} \Kl_3(xz;k_n)
  \Kl_3(yz;k_n)\psi_n(z)
  $$
  which arose in the work of Friedlander and Iwaniec on the ternary
  divisor function~\cite{friedlander-iwaniec}, and reappeared in the
  work of Zhang~\cite{zhang}.
  
  Indeed, if the exponential sum is mixed, this will often be clear from
  the definition, or from a preliminary analysis, and one can
  ``isolate'' the part of most interest (of highest weight usually),
  which will be associated to a punctually pure constructible
  sheaf. Then by restricting the set of definition according to a
  suitable stratification, one will ensure that one handles a lisse
  sheaf.
  
  For $m=1$, this second step means avoiding finitely many values of $x$
  where the sheaf has unusual behavior; for $m\geq 2$, this means
  avoiding those that satisfy some non-trivial polynomial equation
  $g(x_1,\ldots,x_m)=0$. These special parameters can then be handled
  separately---giving rise to a kind of inductive process which
  reflects exactly the algebraic stratification of the corresponding
  coefficient~$M$.
\end{remark}

One good explanation for the focus on mixed objects with bounded weights
can be found (a posteriori) from the statement of Deligne's most general
form of the Riemann hypothesis. In our context, it can be stated as
follows:

\begin{theorem}[Deligne]\label{thm:DRH}
  Let $(t_n)$ be a trace function on~$\Aa^m$ associated to a complex~$M$
  which is mixed of weights~$\leq w$. Let $f=(f_1,\ldots, f_d)$ be a
  tuple of polynomials in $k[X_1,\ldots,X_m]$.  The complex $Rf_!M$ is
  \emph{mixed} of weights $\leq w$ , and so its trace functions
  $$
  s_n(y)=\sum_{\substack{x\in k_n^m\\f(x)=y}}t_n(x)
  $$
  have the properties of trace functions of mixed objects of
  weights~$\leq w$.
\end{theorem}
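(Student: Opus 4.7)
The plan is to follow the strategy of Deligne's proof of the main theorem of \emph{Weil II}: reduce, via a sequence of dévissages, to a single key fact about the weights of $H^1_c$ of a smooth affine curve with coefficients in a pure lisse sheaf, and prove that fact by a Rankin--Selberg-style argument involving large tensor powers and L-functions.

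First I would reduce to the simplest possible morphisms. Any $f\colon \Aa^m\to \Aa^d$ factors through its graph as a closed immersion followed by a projection; for closed immersions the preservation of weights is essentially trivial (at the level of trace functions this is just extension by zero from a subvariety). A projection $\Aa^{m+d}\to \Aa^d$ further decomposes as a composition of one-variable projections, and since $R(f\circ g)_! = Rf_!\circ Rg_!$, it suffices to treat the single projection $p\colon \Aa^{m+1}\to \Aa^m$. By proper base change, the stalk of $\mcH^i(Rp_!M)$ at a geometric point is the compactly supported cohomology of the fiber, which is $\Aa^1$. Thus the entire problem reduces to showing that for any constructible $\bQl$-sheaf $\mcF$ on $\Aa^1_{\bar{k}}$ mixed of weights $\leq w$, the groups $H^i_c(\Aa^1_{\bar{k}},\mcF)$ are mixed of weights $\leq w+i$.

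Further dévissage via the excision long exact sequences for open/closed decompositions of $\Aa^1$ reduces this to the case in which $\mcF$ is lisse on a Zariski-open $U\subset \Aa^1$ and punctually pure of some weight $w$. The degrees $i=0$ (which vanishes for affine $U$) and $i=2$ (handled by the trace map) pose no trouble, so the crux is the bound $|\alpha|\leq |k|^{(w+1)/2}$ for eigenvalues $\alpha$ of Frobenius on $H^1_c(U_{\bar{k}},\mcF)$. For this I would invoke Deligne's \emph{Rankin--Selberg trick}: for large integers $n$, consider the L-function
\[
  L(U,\mcF^{\otimes 2n},T) = \prod_{x\in |U|}\det(1-T^{\deg(x)}\Fr_x\mid (\mcF^{\otimes 2n})_x)^{-1},
\]
identify it cohomologically via Grothendieck--Lefschetz, and exploit the fact that the region of absolute convergence of its Euler product grows with~$n$. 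Positivity of the relevant trace expressions (coming from pairing $\mcF^{\otimes n}$ against its dual to produce a self-dual object with non-negative trace), together with control of the conductor via Laumon's product formula, then squeezes the cohomological zeros and poles into the required half-plane and forces the desired weight bound on $H^1_c$ in the limit $n\to\infty$.

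The main obstacle is precisely this last step. The monodromy of $\mcF$ is not assumed to carry any symmetry, so one cannot directly invoke a classical convolution argument; one must extract weight information from the asymptotic analytic behaviour of L-functions as the tensor power tends to infinity, which is the heart of Deligne's argument. Once the curve case is secured, reassembling the general statement through the base-change reductions above is routine spectral-sequence bookkeeping, and one recovers the claim on trace functions via the Grothendieck--Lefschetz formula applied fibrewise.
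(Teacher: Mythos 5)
The paper does not prove this statement: it appears in the expository Appendix and is cited as one of Deligne's foundational theorems (the main theorem of Weil~II), to be taken as a black box. So there is no ``paper's own proof'' to compare against, and what you have written is a sketch of Deligne's own argument.

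As such a sketch, the broad strategy is right: the dévissage through the graph embedding and one-variable projections, the proper base change reducing to compactly supported cohomology of $\Aa^1$, excision to reach a lisse sheaf on a dense open $U\subset\Aa^1$ punctually pure of weight~$w$, and finally the key estimate on $H^1_c$ via a Rankin--Selberg argument with high tensor powers. Two points, however, deserve correction. First, Deligne's Weil~II argument does \emph{not} invoke Laumon's product formula for $\eps$-factors, which postdates it by seven years; the control of the sizes (Euler characteristics, dimensions of $H^1_c$) that makes the limiting argument work comes from the Grothendieck--Ogg--Shafarevich / Euler--Poincaré formula for curves, together with the observation that the set of ramified points does not grow under tensor powers. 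You may be conflating Deligne's proof with Laumon's later reproof of Weil~II via the geometric Fourier transform. Second, the positivity step needs more care than ``pairing $\mcF^{\otimes n}$ against its dual'': the reduction is to $\iota$-real, $\iota$-semipositive sheaves (obtained, after fixing $\iota\colon\bQl\simeq\Cc$, by passing to $\mcF\otimes\overline{\mcF}$), for which even tensor powers have everywhere non-negative Frobenius traces; this non-negativity is what makes the Euler product for $L(U,(\mcF\otimes\overline{\mcF})^{\otimes k},T)$ have non-negative coefficients and allows one to read off the radius of convergence and hence squeeze the $H^1_c$ eigenvalues as $k\to\infty$. A further reduction (to geometrically irreducible sheaves whose determinant has finite order) is also needed before the positivity argument can be run cleanly. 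These are precisely the subtleties that make Weil~II a deep theorem rather than formal bookkeeping, and a proof proposal for this result should at minimum flag them.
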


\begin{remark}
  On the other hand, even if $M$ is a single lisse sheaf, punctually
  pure of weight~$w$, it is \emph{not always the case} that $Rf_!M$ is
  pure.
\end{remark}

A benefit of introducing these more general definitions is that all
operations now respect the property of being mixed for any trace
function, with a good understanding of how the weights may~change:
\begin{itemize}
\item The lisse sheaf $M=\bQl$ is pure of weight~$0$.
\item If $M_1$ and $M_2$ have weights $\leq w_1$ and $\leq w_2$,
  respectively, then $M_1\oplus M_2$ has weights \hbox{$\leq \max(w_1,w_2)$}
  and $M_1\otimes M_2$ has weights $\leq w_1+w_2$.
\item If $M$ has weights $\leq w$, then for any $k\in\Zz$, the shifted
  complex $M[k]$ has weights $\leq w+k$.
  \item If $M$ has weights $\leq w$, then for any $r\in\Zz$, the twisted
  complex $M(r)$ has weights $\leq w-2r$.
\item If $f=(f_1,\ldots,f_d)\colon \Aa^m\to \Aa^d$ is a tuple of
  polynomials in $k[X_1,\ldots,X_m]$, and $s=(s_n)$ is a trace function
  on~$\Aa^d$ associated to a mixed complex~$N$ of weights~$\leq w$, then
  $f^*N$ has weights $\leq w$.
\item If $f=(f_1,\ldots,f_d)\colon \Aa^m\to \Aa^d$ is a tuple of
  polynomials in $k[X_1,\ldots,X_m]$, and if $M$ has weights $\leq w$,
  then   $Rf_!M$ has weights $\leq w$ (this is again Deligne's Theorem \ref{thm:DRH}).
\end{itemize}
\par
All objects that occur in practice in analytic number theory\footnote{\
  And indeed more generally in algebraic geometry.} are mixed
complexes. This means that any trace function $(t_n)$ has a
decomposition
$$
t_n=\sum_{a\leq w\leq b} t_{n,w}
$$
for some $a$ and $b$ (independent of~$n$), where $(t_{n,w})_{n\geq 1}$
is a trace function associated to a complex which is pure of weight~$w$
on some subvariety.

\subsection{Perverse sheaves}\label{ssec-perv}

There remains the task of attempting to explain a further fundamental
subclass of trace functions (hence of complexes), those associated to
\emph{perverse sheaves}. This is a distinguished class of complexes with
remarkable geometric and arithmetic properties. For analytic purposes,
the most important of these is maybe that the \emph{simple} perverse
sheaves provide a \emph{canonical basis} of the abelian group of trace
functions, and that if we restrict to pure perverse sheaves, then this
is in a natural sense a \emph{quasi-orthogonal basis} for the trace
functions of pure complexes of weight $0$. We will now explain these
properties.

The rigorous definition of perverse sheaves is of a similar nature to
that of pure complexes: it is a relatively simple condition for both the complex~$M$
and its Verdier dual $\dual(M)$, called
\emph{semiperversity}.\footnote{\ The complication is that the Verdier
  dual is often difficult to compute.} The condition of
semiperversity concerns the size of the support of the cohomology
sheaves $\mcH^i(M)$ (which are intuitively the points~$x$ where
$\mcH^i(M)$ does not vanish; in the stratification in terms of lisse
sheaves, this is where these sheaves have non-zero rank): for any
$i\in\Zz$, the support of $\mcH^i(M)$ should be of dimension at most
$-i$. (In particular, if $i\geq 1$, then the support should be empty, so
$\mcH^i(M)$ should be zero then.)

Remarkably, this condition can be recovered intuitively from basic
analytic intuition (which highlights that it is extremely natural).
  
Thus consider a trace function $t=(t_n)$ associated to a complex $M$
on~$\Aa^m$ and assume that it is mixed of weights $\leq 0$. From the
analytic point of view, we are often in the situation where the
mean-square of the values of the trace function $t_n$ are bounded (after
some normalization maybe), and bounded away from zero, i.e., for $n$
large enough, we have
\begin{equation}\label{eq-perverse-norm}
  \sum_{x\in k_n^m}|t_n(x)|^2\asymp 1.
\end{equation}

For $i\in\Zz$, the cohomology sheaf $\mcH^i(M)$ should be
``essentially'' pure of weight~$i$ (rigorously, we only know that it is
mixed of weights $\leq i$). So the contribution to the sum above of the
$x$ in the support~$S_i$ of $\mcH^i(M)$ should be expected to be of
order of magnitude
$$
|k_n|^{2\cdot i/2}\times |S_i(k_n)|\approx |k_n|^{i+d_i}
$$
if $S_i$ has dimension $d_i$. Hence the
estimate~(\ref{eq-perverse-norm}) only has a chance to hold if
$i+d_i\leq 0$ for all~$i$, and this is \emph{precisely} the
semiperversity condition.

\begin{example}
  Consider a family of exponential sums of type
  $$
  \frac{1}{|k_n|^{m}} \sum_{y\in k_n^m}\psi_n(f(y)+x_1y_1\cdots +x_my_m)
  $$
  with parameters $(x_1,\ldots,x_m)\in k_n^m$ (these functions of $x$
  are the trace functions of a complex~$M$ which is a normalized form of
  Deligne's Fourier transform of the lisse sheaf $\mcL_{\psi(f)}$).
  \par
  We expect ``generic'' square-root cancellation, so as $n$ varies,
  for ``most'' choices of $x\in k_n^m$, this sum should be of size
  about $|k_n|^{-m/2}$. Since $\mcH^i(M)$ is of weight~$\leq i$, and
  hence contributes terms of size typically expected to
  be~$|k_n|^{i/2}$, this expectation corresponds to the fact that
  $\mcH^i(M)$ should be ``generically'' zero unless~$i=m$, while
  $\mcH^{-m}(M)$ contributes a fixed number of complex numbers of
  modulus $\leq |k_n|^{-m/2}$.

  But for special values of $x$, those satisfying some non-trivial
  polynomial equation $g(x)=0$, one may obtain a larger sum than
  square-root cancellation. Experience teaches that usually this size
  only jumps by one factor $|k_n|^{1/2}$ (so the sum is about
  $|k_n|^{-m/2+1/2}$) if only this one condition is imposed; if it is
  bigger (say of size $|k_n|^{-m/2+1}$), this should mean that a second
  (independent) equation $h(x)=0$ holds, and so on.

  This ``stratification'' of bounds getting steadily worse only on
  smaller subsets corresponds to cohomology sheaves $\mcH^i(M)$
  (contributing terms of size $|k_n|^{i/2}$) vanishing outside of
  subvarieties of dimension at most $-i$.

  In the extreme case, the exponential sum is of size~$1$ (i.e., there
  is no cancellation at all) at worse for finitely many values of the
  parameters, corresponding to $\mcH^0(M)$ being supported on finitely
  many points.
  \par
  This particular example is at the root of the results of Katz, Laumon
  and Fouvry (see Theorem \ref{th-fk}) on stratification for additive exponential sums which we
  considered in this paper.
  It should suggest to analytic readers that semiperversity is a
  relatively easy condition to check, and that it should be natural and
  ubiquitous in analytic number theory.
\end{example}

The following statement provides a concrete illustration of the
advantages of perverse sheaves.

\begin{theorem}\label{th-appendix-basis}
  The $\Zz$-module of trace functions on~$\Aa^m$ over~$k$ is generated
  by the trace functions of perverse sheaves, and the trace functions of
  \emph{simple} perverse sheaves form a basis.
\end{theorem}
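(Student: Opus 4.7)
The plan is to split the statement into (i) generation of the $\Zz$-module of trace functions by trace functions of simple perverse sheaves, and (ii) their $\Zz$-linear independence. For (i), I will reduce first to perverse sheaves via the perverse $t$-structure, then to simple perverse sheaves via Jordan--Hölder. For (ii), I will induct on the dimension of supports and reduce to the linear independence of trace functions of pairwise non-isomorphic irreducible lisse sheaves on a geometrically connected variety.

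\textbf{Generation.} The fundamental input is that trace functions are additive on distinguished triangles: this is immediate from the Grothendieck--Lefschetz trace formula, since $t_M(x;k_n)$ is an alternating sum of traces of $\frob_x$ acting on the cohomology stalks at~$x$. Equivalently, $M\mapsto t_M$ factors through the Grothendieck group $K_0=K_0(D^b_c(\Aa^m,\bQl))$. Using the perverse $t$-structure, whose heart is the abelian category of perverse sheaves, every complex~$M$ satisfies
\[
[M]=\sum_{i\in\Zz}(-1)^i[\pH^i(M)]\quad\text{in }K_0,
\]
and hence $t_M=\sum_i (-1)^i\, t_{\pH^i(M)}$, writing any trace function as an integer combination of trace functions of perverse sheaves. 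The abelian category of perverse sheaves on $\Aa^m$ is moreover noetherian and artinian, so every perverse sheaf admits a finite Jordan--Hölder filtration whose graded pieces are simple; additivity of trace functions then expresses the trace function of any perverse sheaf as a non-negative integer combination of trace functions of simple perverse sheaves. This yields the generation statement.

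\textbf{Linear independence.} Suppose there is a relation $\sum_{i=1}^{r}n_i t_{S_i}=0$ with pairwise non-isomorphic simple perverse sheaves $S_1,\ldots,S_r$ and integers~$n_i$. I will argue by induction on $d=\max_i \dim\supp(S_i)$. Each simple perverse sheaf $S_i$ is the intermediate extension $j_{i,!*}(\mcF_i[\dim Z_i])$, where $Z_i=\supp(S_i)$ is an irreducible closed subvariety, $U_i\subset Z_i$ is a dense smooth open subset with inclusion $j_i$, and $\mcF_i$ is an irreducible lisse $\bQl$-sheaf on~$U_i$ with Frobenius structure. Let $I=\{i\,\mid\,\dim Z_i=d\}$. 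For each distinct value $Z$ of $Z_i$ for $i\in I$, I can choose a dense open $V_Z\subset Z$ contained in every~$U_i$ with $Z_i=Z$ and disjoint from every $Z_j$ with $Z_j\neq Z$ (possible because such $Z_j$ meet $Z$ in a proper closed subset, by dimension and irreducibility). Restricting the relation to $V_Z$, the terms with $Z_i\neq Z$ vanish while those with $Z_i=Z$ restrict to $\mcF_i[d]$, giving
\[
(-1)^d\sum_{\substack{i\in I\\Z_i=Z}}n_i\, t_{\mcF_i}(x;k_n)=0
\]
for all $n\geq 1$ and $x\in V_Z(k_n)$.

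\textbf{Main obstacle.} The crux is then that trace functions of pairwise non-isomorphic irreducible lisse $\bQl$-sheaves on a geometrically connected smooth $k$-variety are linearly independent over~$\Cc$. I would deduce this from the Chebotarev density theorem for \'etale fundamental groups together with the Brauer--Nesbitt theorem: the trace function of a lisse sheaf records the traces of Frobenius conjugacy classes at all closed points, and by Chebotarev these classes are dense in the image of the monodromy representation; since pairwise non-isomorphic irreducible representations of a compact group have linearly independent characters, the conclusion follows. Applying this on each $V_Z$ forces $n_i=0$ for $i\in I$, and induction on $d$ concludes the proof.
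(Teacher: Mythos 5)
Your argument is correct, and on the part the paper actually proves it coincides with the paper's approach: both reduce a general complex to its perverse cohomology objects via $t_n=\sum_i(-1)^i\,\pt_{i,n}$, then to simple constituents via the finite Jordan--H\"older filtration in the artinian and noetherian abelian category of perverse sheaves. The difference lies in the linear-independence half: the paper does not argue it but defers entirely to Laumon~\cite[Th.\,1.1.2]{laumon}, whereas you supply a self-contained proof built on the BBD description of simple perverse sheaves as middle extensions $j_{!*}(\mcF[\dim Z])$, an induction on the dimension of supports with a generic-restriction step, and Chebotarev density in $\pi_1$ combined with linear independence of characters of pairwise non-isomorphic irreducible representations. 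This is essentially the mechanism underlying Laumon's theorem, so what your route buys is transparency for the analytic reader (the explicit purpose of this appendix) at the cost of having to set up the structure theory of simple perverse objects. Two small points to tighten: the opens $V_Z$ need only be connected, not geometrically connected --- irreducibility of $Z$ already forces any nonempty open of $Z$ to be connected, and Chebotarev density for $\pi_1(V_Z)$ needs no geometric connectedness --- and the algebraic input is not really Brauer--Nesbitt but the linear independence of characters of pairwise non-isomorphic irreducible representations (Jacobson density / Burnside), which you then combine with $\ell$-adic continuity of the characters and density of Frobenius conjugacy classes to upgrade a relation on Frobenius elements to one on all of $\pi_1(V_Z)$.
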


The first statement is in fact very explicit. Indeed, if $t=(t_n)$ is an
arbitrary trace function, associated to a complex~$M$, one can define
(in addition to its ``usual'' cohomology sheaves $\mcH^i(M)$) its
\emph{perverse cohomology sheaves} $\pH^i(M)$, which are perverse
sheaves, zero for $|i|>m$, such that their trace functions
$(\pt_{i,n})_{n\geq 1}$ satisfy the equation
$$
t_n=\sum_{i\in\Zz}(-1)^i\ \pt_{i,n}
$$
for all $n\geq 1$. Furthermore, a complex $M$ is mixed of
weights~$\leq w$ if and only if each $\pH^i(M)$ is also mixed of
weights~$\leq w+i$ (similarly to the cohomology sheaves;
see~\cite[Th.\,5.4.1]{BBD-pervers}).

\begin{remark}
  To say that a complex $M$ is perverse is to say that its perverse cohomology sheaves are $M=\pH^0(M)$ and
  $\pH^i(M)=0$ for all~$i\not=0$.
\end{remark}

Up to the terminology and notation, the second statement of
Theorem~\ref{th-appendix-basis} is proved by Laumon
in~\cite[Th.\,1.1.2]{laumon} (though it was known before, at least to
Deligne). To understand it, one must explain what are the simple
perverse sheaves which are mentioned there. We will content ourselves
with stating the quasi-orthonormality property which holds for a simple
perverse sheaf that is pure of weight~$0$.  It is another consequence of
Deligne's Riemann Hypothesis, proved by Katz, that if $t=(t_n)$ is the
trace function of a perverse sheaf~$M$, then
\begin{equation}\label{eq-quasi-ortho}
  \limsup_{n\to +\infty}\sum_{x\in k_n^m}|t_n(x)|^2=1
\end{equation}
\emph{if and only if} $M$ is \emph{simple}.

\begin{remark}
  One of the fundamental results of Beilinson, Bernstein, Deligne and
  Gabber \cite[Cor.\,5.3.4]{BBD-pervers} is that a simple perverse sheaf
  which is mixed, as a complex, is in fact \emph{pure} of some weight;
  since non-mixed complexes do not appear in practice, this means that
  simple perverse sheaves in analytic number theory are always pure of
  some weight, and the quasi-orthonormality characterization can be
  extended to all simple perverse sheaves, up to normalization.
\end{remark}

\begin{example}
  We can illustrate how useful this quasi-orthonormality statement can
  be to guess or understand some properties of perverse sheaves by
  noting that it strongly suggests a non-trivial property of simple
  perverse sheaves. Namely, let $M$ be a simple perverse sheaf, pure of
  weight~$0$, and generically non-zero (i.e., the support of~$M$ is all
  of~$\Aa^m$). If we repeat the argument leading to the guess of the
  semiperversity condition, we see that we expect that the contribution
  to
  $$
  \sum_{x\in k_n^m}|t_n(x)|^2
  $$
  of each non-zero cohomology sheaf $\mcH^i(M)$ should be of size
  $$
  \alpha_i |k_n|^{i+d_i}
  $$
  for some integer $\alpha_i\geq 1$, and comparison
  with~(\ref{eq-quasi-ortho}) indicates that $i+d_i$ will be $<0$ except
  for one single value of~$i$. Moreover, one knows that the cohomology
  sheaf $\mcH^{-m}(M)$ is generically non-zero, so this value must be
  $i=-m$, so that we expect that
  $$
  d_i\leq -i-1\quad \text{ for }\quad i\not=-m,
  $$
  which is stronger than the condition $d_i\leq -i$ derived from
  semiperversity only. This property is indeed true, and it is a very
  useful fact in applications.
\end{example}


\begin{thebibliography}{CC}
\bibitem{bpw} D. Bonolis, L. Pierce and K. Woo: \textit{Counting
    integral points on thin sets of type II: singularities, sieves and
    stratification}, preprint, \url{arXiv:2505.11226}.

\bibitem{deligne} P. Deligne: \textit{La conjecture de Weil, II},
  Publ. Math. IHÉS 52 (1980), 137--252.
  
\bibitem{ffk} A. Forey, J. Fresán and E. Kowalski: \textit{Arithmetic
    Fourier transforms over finite fields}, to appear in Astérisque;
  \url{arXiv:2109.11961}.

\bibitem{BBD-pervers} A.A. Be{\u \i}linson, J. Bernstein, P. Deligne and
  O. Gabber: \textit{Faisceaux pervers}, in ``Analysis and Topology on
  Singular Spaces'', Luminy, Astérisque 100, S.M.F, 1982.
  
\bibitem{fouvry} \'E. Fouvry: \textit{Consequences of a result of
    N. Katz and G. Laumon concerning trigonometric sums}, Israel
  J. Math. 120 (2000), 81--96.

\bibitem{fouvry2} \'E. Fouvry: \textit{Sur les propriétés de
    divisibilité des nombres de classes des corps quadratiques},
  Bulletin S. M. F. 127 (1999), 95--113.
  
\bibitem{fouvry-katz} \'E. Fouvry and N. Katz: \textit{A general
    stratification theorem for exponential sums, and applications},
  Crelle 540 (2001), 115--166.

\bibitem{pisa} É. Fouvry, E. Kowalski and Ph. Michel: \textit{Trace
    Functions over Finite Fields and Their Applications}, in Colloquium
  De Giorgi 2013--14, Ed. Norm. Pisa 5, 2015.

\bibitem{arizona} É. Fouvry, E. Kowalski, Ph. Michel and W. Sawin:
  \textit{Lectures on applied $\ell$-adic cohomology}, in ``Analytic
  methods in arithmetic geometry'', Contemp. Math. 740, A.M.S, 2019.

\bibitem{fkms} É. Fouvry, E. Kowalski, Ph. Michel and W. Sawin:
  \textit{Bilinear forms with trace functions}, preprint (2025),
  \url{arXiv:2511.09459}.
  
\bibitem{friedlander-iwaniec} J. Friedlander and H. Iwaniec:
  \textit{Incomplete {K}loosterman sums and a divisor problem}, Ann. of
  Math 212 (1985), 319--350; with an appendix by B. J. Birch and
  E. Bombieri.
  
\bibitem{lei-fu} L. Fu: \textit{Étale cohomology theory}, Nankai
  Tracts in Math. 13, World Scientific (2010).
  
\bibitem{g-w} U. Görtz and T. Wedhorn: \textit{Algebraic geometry, I:
    schemes}, Vieweg-Teubner 2010.

\bibitem{gsz} A. Granville, I. Shparlinski and A. Zaharescu:
  \textit{On the distribution of rational functions along a curve
    over~$\Ff_p$ and residue races}, J. Number Theory 112 (2005),
  216--237.

\bibitem{hooley} C. Hooley: \textit{On the number of points on a
    complete intersection over a finite field}, J. Number Theory 38
  (1991), 338--358; with an Appendix by N.M. Katz.

\bibitem{ireland-rosen} K. Ireland and M. Rosen: \textit{A classical
    introduction to modern number theory}, 2nd edition, Grad. Texts in
  Math. 84, Springer, 1992.

\bibitem{iwaniec-kowalski} H. Iwaniec and E. Kowalski:
  \textit{Analytic number theory}, A.M.S Colloquium Publ. 53, 2004.

\bibitem{katz-gkm} N. M. Katz: \textit{Gauss sums, Kloosterman sums and
    monodromy groups}, Annals of Math. Studies 116, Princeton
  Univ. Press (1988).
  
\bibitem{katz-betti} N. M. Katz: \textit{Sums of Betti numbers in
    arbitrary characteristics}, Finite Fields and Their Applications
  7 (2001), 29--44.

\bibitem{katz-act} N. M. Katz: \textit{Affine cohomological
    transforms, perversity and monodromy}, Journal AMS  6 (1993),
  149--222. 

\bibitem{katz-mmp} N. M. Katz: \textit{Moments, monodromy and
    perversity}, Annals of Math. Studies 159, Princeton Univ. Press
  (2005).

\bibitem{katz-laumon} N.M. Katz and G. Laumon: \textit{Transformation
    de Fourier et majoration de sommes exponentielles},
  Publ. Math. IHÉS 62 (1985); 145--202; Corrigendum 69 (1989), 233.

\bibitem{laumon} G. Laumon: \textit{Transformation de Fourier,
    constantes d’équations fonctionnelles et conjecture de Weil},
  Publ. Math. IHÉS 65 (1987), 131--210.

\bibitem{pierce-xu} L.B. Pierce, J. Xu: \textit{Burgess bounds for short character sums evaluated at forms}, Algebra Number Theory 14 (2020), 1911--1951.

\bibitem{qst} W. Sawin; A. Forey, J. Fresán and E. Kowalski:
  \textit{Quantitative sheaf theory}, Journal of the AMS 36 (2023),
  653--726.

\bibitem{Ser97} J-P. Serre: \textit{Lectures on the {M}ordell-{W}eil
    theorem}, Aspects of Mathematics, Vieweg, third edition, 1997.

\bibitem{xu} J. Xu: \textit{Stratification for multiplicative character sums}, Int. Math. Res. Not. 10 (2020), 2881--2917.

\bibitem{zhang}
  Y. Zhang: \textit{Bounded gaps between primes}, Ann. of Maths 179
  (2014), 1121--1174.

\end{thebibliography}
\end{document}